\newcommand{\lk}{\left(}
\newcommand{\rk}{\right)}
\newcommand{\R}{\mathbb{R}}
\newcommand{\N}{\mathbb{N}}
\newcommand{\E}{\mathbb{E}}
\newcommand{\del}{\delta_{ij}^{(m)}}
\newcommand{\m}{^{(m)}}
\begin{document}

\title{Regularising linear inverse problems under unknown non-Gaussian white noise allowing repeated measurements}
\shorttitle{Linear inverse problems under unknown white noise}

\author{%
{\sc
Bastian Harrach\thanks{Email: harrach@math.uni-frankfurt.de}
and
Tim Jahn\thanks{Corresponding author. Email: jahn@math.uni-frankfurt.de}}\\[2pt]
Insitute of Mathematics, Goethe-University Frankfurt, Frankfurt am Main, Germany\\[6pt]
{\sc and}\\[6pt]
{\sc Roland Potthast}\thanks{Email: Roland.Potthast@dwd.de}\\[2pt]
German Weather Service, Offenbach am Main, Germany
}
\shortauthorlist{T. Jahn \emph{et al.}}

\maketitle

\begin{abstract}
{We deal with the solution of a generic linear inverse problem in the Hilbert space setting. The exact right hand side is unknown and only accessible through discretised measurements corrupted by white noise with unknown arbitrary distribution. The measuring process can be repeated, which allows to reduce and estimate the measurement error through averaging. We show  convergence against the true solution of the infinite-dimensional problem for a priori and a posteriori regularisation schemes as the number of measurements and the dimension of the discretisation tend to infinity under natural and easily verifiable conditions for the discretisation.}
{statistical inverse problems; discretisation; white noise; discrepancy principle.}
\end{abstract}

\section{Introduction and Prelimiaries}\label{intro}

We consider a compact linear operator $K:\mathcal{X}\to\mathcal{Y}$ between Hilbert spaces. The goal is to solve the ill-posed equation $K\hat{x}=\hat{y}$ for a given $\hat{y}\in \mathcal{D}(K^+)$, where $K^+$ is the generalised inverse and the right hand side $\hat{y}$ is ad hoc unknown and has to be reconstructed from measurements. Solving the problem then typically requires specific a priori information about the noise. Here, our key assumption will be that we are able to perform multiple measurements and we do not require any other specific assumption for the error distribution of one measurement. Measuring the same quantity repeatedly is a standard engineering practice to decrease the measurement error known as 'signal averaging' and was extensively studied in \cite{harrach2020beyond} and \cite{jahn2021modified} in the context of infinite-dimensional inverse problems with (strongly $L^2$-bounded) unknown noise. In this article we take discretisation into account and generalise the error distribution further to arbitrary unknown white noise.

 As an arbitrary element of an infinite-dimensional space $\hat{y}$ cannot be measured directly, but we may measure $l(\hat{y})$ for various linear functionals $l\in\mathcal{L}(\mathcal{Y},\R)$. If the unknown $\hat{y}$ is for example a continuous function, one may think of performing point evaluations or measuring the integrals of that function over small parts of the domain. We will refer to these linear functionals as measurement channels in the following. We assume that we have multiple and unbiased samples on each measurement channel corrupted randomly by additive noise. So, 

\begin{equation}\label{errmod}
Y_{ij}:=l_j(\hat{y})+\delta_{ij}
\end{equation}

is the $i$-th sample on the $j$-th measurement channel, with $\|l_1\|=\|l_2\|=...$ and 
 unbiased and independent measurement errors $\delta_{ij}$, $i,j\in\N$ with arbitrary unknown distribution. Thus $$\begin{pmatrix} Y_{i1} - l_1(\hat{y}) \\ ... \\ Y_{im} - l_m(\hat{y}) \end{pmatrix}_{i\in\N} \subset \R^m$$ are i.i.d white noise vectors with unknown distribution.  We assume that $(l_j)_{j\in \N}$ is complete and square-summable, i.e. for all $y\in\mathcal{Y} \setminus\{0\}$ there exists a $l_j$ with $l_j(y)\neq 0$ and  $\sum_{j=1}^\infty l_j(y)^2<\infty$. For a fixed number $m$ of measurement channels and a large number $n$ of repetitions we obtain an approximation

\begin{equation*}
\bar{Y}_n\m:=
\begin{pmatrix}
\frac{1}{n}\sum_{i=1}^n Y_{i1}\\...\\\frac{1}{n}\sum_{i=1}^n Y_{im}
\end{pmatrix}\approx \begin{pmatrix} l_1(\hat{y}) \\ ... \\ l_m(\hat{y})\end{pmatrix}.
\end{equation*}

  As a first approach we are using the method of Tikhonov and minimise the following functional with finite-dimensional residuum (fdr)

\begin{equation}\label{tikhapprox}
\arg\min_{x\in\mathcal{X}}\left\lVert \begin{pmatrix} l_1(Kx) \\ ... \\ l_m(Kx)\end{pmatrix}- \bar{Y}_n\m\right\rVert^2_{\mathbb{R}^m}+\alpha \lVert x \rVert_{\mathcal{X}}^2.
\end{equation}

The main question of this work is whether the unique minimiser of \eqref{tikhapprox}, denoted by $R\m_{\alpha}\bar{Y}_n\m$, converges to $\hat{x}$ for $m,n\to\infty$ for adequately chosen $\alpha=\alpha(m,n)$. 
Hereby, an important quantity is the measurement error $\| \bar{Y}_n\m - \begin{pmatrix} l_1(\hat{y}) & ... & l_m(\hat{y})\end{pmatrix}^T\|$, which by randomness is unknown and has to be guessed. The i.i.d assumption yields a natural estimator 

\begin{equation}\label{natest}
\delta_{m,n}^{est}:=\sqrt{\frac{m}{n} s^2_{m,n}},
\end{equation}

  where $s^2_{m,n}:=\frac{1}{m}\sum_{j=1}^m \frac{1}{n-1}\sum_{i=1}^n\left(Y_{ij}-\frac{1}{n}\sum_{l=1}^n Y_{lj}\right)^2$ is 
the mean of the sample variances. The estimator $s_{m,n}^2$ for the unknown variance $E(Y_{ij}-l_j(\hat{y}))^2$ is natural in our general setting. If one has more information about the structure of the discretisation, e.g. in regression problems where the unknown function is measured along a grid, more specified choices may also be reasonable. See \cite{rice1984bandwidth} and \cite{dette1998estimating}, where the variance is estimated in such settings with only one measurement on each channel (i.e. for $n=1$). In \cite{dai2015difference} different methods are compared to each other for repeated measurements on each channel $n>1$. In particular, it is shown that our choice is asymptotically optimal (for $n,m\to\infty$), but that there are better choices for finite sample sizes given that higher moments of the measurement error exist. From a deterministic view point, in order to guarantee convergence it would arguably be necessary to assure that the measurement error tends to $0$, i.e. that $\delta_{m,n}^{est}\to 0$ in probability (or a.s. or in root mean square), which holds if and only if $m/n\to 0$ (see Proposition \ref{sec:5prop1}). This will be a central assumption in most of this manuscript. In the lens of classical results from the statistical side this however seems to be an unnaturally strong condition, since in many special cases it is sufficient to have that the (overall) measurement error stays bounded, i.e. that $n=m\to\infty$ (\cite{vogel2002computational}) or  even that only the component-wise measurement error converges to $0$, i.e. that merely $m,n\to\infty$ without any specific relation between $m$ and $n$ (\cite{cavalier2011inverse}). We will show in Section \ref{sec:discussion} below that somehow surprisingly the condition $m/n\to0$ is in essence necessary to guarantee convergence in our general setting.
 
 One of the most natural and popular strategies to determine the regularisation parameter $\alpha$ in \eqref{tikhapprox} is the discrepancy principle \cite{morozov1968error}, which constitutes in solving

\begin{equation}\label{dp1}
 \left\|\begin{pmatrix} l_1(K R_\alpha\m \bar{Y}_n\m) \\ ... \\ l_m(K R_\alpha\m\bar{Y}_n\m)\end{pmatrix} - \bar{Y}_n\m\right\| \approx \delta_{m,n}^{est}
\end{equation} 
 
(see Algorithm 1 with $C_0=1$ for the numerical implementation). We obtain the following convergence result for the discrepancy principle.

\begin{corollary}\label{cor1}
Assume that $K$ is injective with dense range and that $(\delta_{ij})_{i,j\in\N}$ are independent and identically distributed with zero mean and bounded variance. Moreover assume that $(l_j)_{j\in\N}$ is complete and square-summable. Then for $\alpha_{m,n}$ determined by the discrepancy principle \eqref{dp1} there holds

$$\lim_{\substack{m\to\infty\\n\to\infty\\m/n\to 0}}\mathbb{P}\left(\left\|R_{\alpha_{m,n}}\m\bar{Y}_n\m-K^+\hat{y}\right\|\ge \varepsilon\right)=0$$

for all $\varepsilon>0$.
\end{corollary}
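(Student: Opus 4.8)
The plan is to read \eqref{tikhapprox}--\eqref{dp1} as a deterministic discretised inverse problem driven by random data, and to split the work into a deterministic regularisation estimate and a probabilistic analysis of the noise. Writing $K_m\colon\mathcal X\to\R^m$, $K_mx:=(l_1(Kx),\dots,l_m(Kx))^T$, and $\bar\delta_n\m:=\bar Y_n\m-(l_1(\hat y),\dots,l_m(\hat y))^T$ for the averaged measurement error, the data become $\bar Y_n\m=K_m\hat x+\bar\delta_n\m$ with $\hat x=K^+\hat y$ (here $\hat y\in\mathcal D(K^+)=\operatorname{ran}(K)$ since $K$ has dense range). I would deduce the corollary from the paper's general a posteriori convergence theorem, whose structural assumptions --- injectivity and dense range of $K$ and completeness and square-summability of $(l_j)_{j\in\N}$ --- are exactly those imposed here and are what guarantee that the finite-dimensional reconstructions approximate $K^+\hat y$ as $m\to\infty$. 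This leaves two purely probabilistic statements to verify in the regime $m,n\to\infty$, $m/n\to0$:
\begin{equation*}
\delta_{m,n}^{est}\stackrel{\PP}{\longrightarrow}0
\qquad\text{and}\qquad
\frac{\|\bar\delta_n\m\|}{\delta_{m,n}^{est}}\stackrel{\PP}{\longrightarrow}1,
\end{equation*}
i.e.\ that the estimated noise level vanishes and asymptotically matches the true discretised noise.

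The first convergence is exactly Proposition \ref{sec:5prop1}. For the second I would start from the per-channel variance decomposition $\sum_{i=1}^n\delta_{ij}^2=(n-1)s_j^2+n(\tfrac1n\sum_i\delta_{ij})^2$, with $s_j^2$ the $j$-th sample variance and $s_{m,n}^2=\frac1m\sum_j s_j^2$. Summing over the channels and dividing by $mn$ gives the exact identity
\begin{equation*}
\frac1m\|\bar\delta_n\m\|^2=T_{m,n}-\frac{n-1}{n}\,s_{m,n}^2,
\qquad
T_{m,n}:=\frac1{mn}\sum_{j=1}^m\sum_{i=1}^n\delta_{ij}^2 .
\end{equation*}
Since $(\delta_{m,n}^{est})^2=\frac mn s_{m,n}^2$, this rearranges into
\begin{equation*}
\frac{\|\bar\delta_n\m\|^2}{(\delta_{m,n}^{est})^2}
=1+\frac{n\,(T_{m,n}-s_{m,n}^2)}{s_{m,n}^2},
\end{equation*}
so with $\sigma^2:=\E[\delta_{11}^2]$ it suffices to prove $s_{m,n}^2\stackrel{\PP}{\longrightarrow}\sigma^2$ and, the subtle part, $n(T_{m,n}-s_{m,n}^2)\stackrel{\PP}{\longrightarrow}0$, equivalently $\frac nm\|\bar\delta_n\m\|^2\stackrel{\PP}{\longrightarrow}\sigma^2$.

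Now $T_{m,n}$ is the average of the $mn$ i.i.d.\ variables $\delta_{ij}^2$ of finite mean $\sigma^2$, so $T_{m,n}\stackrel{\PP}{\longrightarrow}\sigma^2$ by the ordinary weak law; combined with $\E[\tfrac1m\|\bar\delta_n\m\|^2]=\sigma^2/n\to0$ this already yields $s_{m,n}^2\stackrel{\PP}{\longrightarrow}\sigma^2$ via the identity above. The main obstacle is $\frac nm\|\bar\delta_n\m\|^2=\frac1m\sum_{j=1}^m W_j$ for the row-wise i.i.d.\ $W_j:=(\tfrac1{\sqrt n}\sum_{i=1}^n\delta_{ij})^2$ with $\E W_j=\sigma^2$: under the bare assumption of bounded variance the second moment of $W_j$ would involve the possibly infinite fourth moment of $\delta_{ij}$, so Chebyshev's inequality is not available. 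I would instead invoke a weak law of large numbers for triangular arrays (truncating at level $m$), whose hypotheses reduce to uniform integrability of the family $\{W_j\}_n$. That uniform integrability follows from the central limit theorem $\tfrac1{\sqrt n}\sum_{i=1}^n\delta_{ij}\Rightarrow\mathcal N(0,\sigma^2)$, valid under finite variance alone, together with $\E W_j=\sigma^2$: convergence in distribution of the nonnegative $W_j$ plus convergence of their means to that of the limit forces the family to be uniformly integrable, and the triangular-array law then gives $\frac1m\sum_j W_j\stackrel{\PP}{\longrightarrow}\sigma^2$.

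Finally, the two probabilistic limits transfer the deterministic theorem to a statement in probability by the subsequence principle: along any admissible sequence $(m_k,n_k)$ one extracts a further subsequence on which $\delta_{m,n}^{est}\to0$ and $\|\bar\delta_n\m\|/\delta_{m,n}^{est}\to1$ almost surely, applies the deterministic convergence theorem pathwise to obtain $\|R_{\alpha_{m,n}}\m\bar Y_n\m-K^+\hat y\|\to0$ a.s., and concludes $\PP(\|R_{\alpha_{m,n}}\m\bar Y_n\m-K^+\hat y\|\ge\vare)\to0$. The only point needing care is that the ratio tends to $1$ rather than staying strictly below it, so with the borderline constant $C_0=1$ the estimated level need not dominate the true noise; this is absorbed either by the robustness of the discrepancy-principle estimate to an $o(1)$ overshoot or by the standard device of an arbitrarily small safety factor, neither of which affects the limit.
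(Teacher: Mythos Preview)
Your approach is sound but considerably more work than the paper's. In the paper, Corollary~\ref{cor1} is obtained in one line: the i.i.d.\ assumption on $(\delta_{ij})$ is precisely Assumption~\ref{err:disc}.1, completeness and square-summability of $(l_j)$ give Assumption~\ref{disc:fdr} via Proposition~\ref{fdr:intr}, and then Theorem~\ref{th2} applies directly. Theorem~\ref{th2} is already a \emph{probabilistic} statement; it does not require the two limits $\delta_{m,n}^{est}\to 0$ and $\|\bar\delta_n\m\|/\delta_{m,n}^{est}\to 1$ as hypotheses.

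What you have done is essentially reconstruct the proof of Theorem~\ref{th2} rather than invoke it. The two probabilistic limits you verify are exactly the content of Proposition~\ref{sec:5prop1} and Lemma~\ref{estlem1}, and your uniform-integrability argument (CLT plus convergence of means forces UI, then a truncated triangular-array weak law) is the same mechanism as the paper's explicit truncation-and-Portmanteau computation in the proof of Lemma~\ref{estlem1} under Assumption~\ref{err:disc}.1, just packaged more abstractly. Your subsequence-principle reduction at the end is a legitimate alternative to the paper's direct argument via the event $\Omega_{m,n}$, but it relies on a ``deterministic convergence theorem'' for the discretised discrepancy principle as $m\to\infty$ that is not stated separately in the paper; it is precisely the content of the estimates on $\Omega_{m,n}$ in the proof of Theorem~\ref{th2}, which in turn rest on Lemma~\ref{proof:lem1} and Propositions~\ref{proofs:prop1}--\ref{proof:prop1}. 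Your final remark about the ratio possibly exceeding $1$ is handled in the paper by choosing the slack factor $(\tau+C_0)/(2C_0)>1$ in the definition of $\Omega_{m,n}$, which your ``arbitrarily small safety factor'' reproduces. In short: correct, and close in spirit to the paper, but you have re-derived Theorem~\ref{th2} instead of citing it.
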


All the details to this result can be found in Section \ref{sec:fdr}, where we also more generally treat filter based regularisations as well as a priori parameter choice rules and discretisations $l_j\m$, $j=1,...,m$, $m\in\N$. Let us stress that Corollary \ref{cor1} guarantees convergence without any quantitative knowledge of the quality of the discretisation (error) and for arbitrary unknown error distributions. This might be surprising in view of the Bakushinskii veto (\cite{bakushinskii1984remarks}), which states that quantitative a priori knowledge about the noise is a crucial requirement for solving an inverse problem.  We stress that Corollary \ref{cor1} does not give a convergence rate. In order to obtain a rate additional smoothness assumptions (relative to the forward operator $K$) have to be imposed on the true solution $\hat{x}$ and the relation of $K$ and the discretisation will play a crucial role. This is a topic of actual research and postponed to a later work. Other than that we want to present an alternative approach, which allows to deduce rates in very general settings. However, note that in what follows the rates are deduced by a classical worst-case error analysis and are not optimal in the statistical setting. 
Whether the discrepancy principle can be modified to attain optimal rates (in the statistical setting) in our general frame work is beyond the scope of this work. We will discuss this in more detail in Section \ref{sec:discussion}.  

The main idea of the alternative approach is to first construct from the measured data in $\R^m$ continuous measurements in the Hilbert space $\mathcal{Y}$, see e.g. \cite{garde2021mimicking}.
 For that we  solve the following optimisation problem 

\begin{equation}\label{backproject}
\min_{y\in\mathcal{Y}}\left\|\begin{pmatrix} l_1(y)\\ ... \\ l_m(y)\end{pmatrix} - \bar{Y}_n\m\right\|,
\end{equation}

which has an unique solution with minimal norm (due to Moore-Penrose) denoted by $Z_n\m$ in the following. We restrict to discretisations for which \eqref{backproject} is well-conditioned, see Assumption \ref{disc:idr}. For general discretisations one would need to add an additional regularisation term.
Then, instead of \eqref{tikhapprox} we solve the following optimisation problem with infinite-dimensional residuum (idr)




\begin{equation}\label{tikhapprox2}
\arg\min_{x\in\mathcal{X}}\left\lVert K x- Z_n\m\right\rVert^2_{\mathcal{Y}}+\alpha \left\lVert x \right\rVert_{\mathcal{X}}^2
\end{equation}

 and the regularisation parameter $\alpha$ has to be chosen accordingly to $\| Z_n\m-\hat{y}\|$. With $y\m$ the (unique) minimum norm solution of

$$\min_{y\in\mathcal{Y}}\left\| \begin{pmatrix} l_1(y)\\ ... \\ l_m(y) \end{pmatrix} - \begin{pmatrix} l_1(\hat{y}) \\ ... \\ l_m(\hat{y})\end{pmatrix}\right\|$$

we may decompose this term into a measurement error and a discretisation error

$$\left\| Z_n\m-\hat{y}\right\| \le \left\| Z_n\m - y\m \right\|  + \left\|y\m-\hat{y}\right\|.$$

 Assume that we know an asymptotic bound $\delta_m^{disc}$ for the discretisation error $\|\hat{y}-y\m\|$ (which is natural in various settings, see Section \ref{sec:idr}). One may estimate $\|Z_n\m - y\m\|$ (see Algorithm 2) and should use that many repetitions $n(m,\delta_m^{disc})$, such that this estimator approximately equals $\delta_m^{disc}$. The regularisation parameter $\alpha$ is then again determined via the discrepancy principle

\begin{equation}\label{dp2}
\left\|K R_{\alpha}Z_{n(m,\delta_m^{disc})}\m - Z_{n(m,\delta_m^{disc})}\m\right\| \approx 2\delta_m^{disc},
\end{equation}

with $R_\alpha Z_{n(m,\delta_m^{disc})}\m$ the unique solution of \eqref{tikhapprox2} (see Algorithm 2 with $C_0=1$ for the numerical implementation). We obtain the following result on the convergence and the order optimality.
 
\begin{corollary}\label{cor2}
Assume that $K$ is injective with dense range and that $(\delta_{ij})_{i,j\in\N}$ are independent with zero mean and finite variance. Moreover, the discretisation is complete and well-conditioned (see Proposition \ref{idr:intr}). Let $(\delta_m^{disc})_{m\in\N}$ be an known upper bound for the discretisation error converging to 0 and determine $\alpha_m$ with the discrepancy principle \eqref{dp2}.  Then
$$\lim_{m\to\infty}\mathbb{P}\left(\left\|R_{\alpha_m} Z_{n(m,\delta_m^{disc})}\m-K^+\hat{y}\right\|\ge \varepsilon\right)=0$$

for all $\varepsilon>0$. If moreover there is a $0<\nu\le 1$ and a $\rho>0$ such that $K^+\hat{y}=(K^*K)^{\nu/2}w$ for some $w\in\mathcal{X}$ with $\| w \| \le \rho$, then
 
 \begin{equation*}
 \lim_{m\to\infty}\mathbb{P}\left(\left\lVert R_{\alpha_m}Z_{n(m,\delta_m^{disc})}\m - K^+\hat{y}\right\rVert\le L' \rho^\frac{1}{\nu+1} {\delta_m^{disc}}^\frac{\nu}{\nu+1}\right)= 1
 \end{equation*}
 
  for some constant $L'$.

\end{corollary}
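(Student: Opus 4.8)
The plan is to decouple the randomness from the deterministic regularisation theory, since the construction in \eqref{backproject}--\eqref{dp2} is designed precisely to turn the statistical problem into a classical one. Having back-projected the averaged data to $Z_n\m\in\mathcal{Y}$ and having chosen the number of repetitions $n(m,\delta_m^{disc})$ so that the estimated measurement error matches $\delta_m^{disc}$, the element $Z_n\m$ plays the role of noisy data for the infinite-dimensional Tikhonov functional \eqref{tikhapprox2} with an effective noise level of order $\delta_m^{disc}$, and \eqref{dp2} is just the classical discrepancy principle at level $2\delta_m^{disc}$ for the equation $Kx=\hat{y}$. So I would first isolate a high-probability event on which the effective noise bound $\|Z_n\m-\hat{y}\|\le 2\delta_m^{disc}$ holds, and on that event invoke the deterministic worst-case theory for Tikhonov regularisation under the discrepancy principle.

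First I would establish the probabilistic reduction. By the triangle inequality $\|Z_n\m-\hat{y}\|\le \|Z_n\m-y\m\|+\|y\m-\hat{y}\|$ and the assumed bound $\|y\m-\hat{y}\|\le\delta_m^{disc}$, it suffices to show that with probability tending to $1$ the measurement error $\|Z_n\m-y\m\|$ is asymptotically at most $\delta_m^{disc}$. Since the discretisation is well-conditioned, the back-projection is boundedly invertible with constants uniform in $m$, so $\|Z_n\m-y\m\|$ is comparable to the Euclidean error $\|\bar Y_n\m-(l_1(\hat y),\dots,l_m(\hat y))^T\|$, whose mean square equals $\frac{1}{n^2}\sum_{j=1}^m\sum_{i=1}^n\mathrm{Var}(\delta_{ij})$. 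This is precisely the quantity the estimator $\delta_{m,n}^{est}=\sqrt{(m/n)s_{m,n}^2}$ is built to track, and $n=n(m,\delta_m^{disc})$ is chosen so that $\delta_{m,n}^{est}\approx\delta_m^{disc}$. A Chebyshev argument, exploiting the averaging over the $m$ independent channels and $n$ repetitions, then shows that both $s_{m,n}^2$ and the realised measurement error concentrate around their expectations, yielding $\|Z_n\m-y\m\|\le(1+o(1))\delta_m^{disc}$ with probability $\to1$.

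Next, on this event I would run the deterministic machinery. For the convergence part, the discrepancy principle is a convergent regularisation method: since the effective noise level $2\delta_m^{disc}\to0$, the standard argument (the regularised residual is driven to the noise level, which forces $\alpha_m\to0$ at an admissible rate) gives $\|R_{\alpha_m}Z_n\m-K^+\hat{y}\|\to0$, and together with the vanishing probability of the complementary event this yields convergence in probability. For the rate part I would use the classical source-condition estimate for Tikhonov regularisation under the discrepancy principle: if $K^+\hat{y}=(K^*K)^{\nu/2}w$ with $0<\nu\le1$ and $\|w\|\le\rho$, then the worst-case error obeys $\|R_\alpha Z-K^+\hat{y}\|\le C\,\rho^{1/(\nu+1)}\|Z-\hat{y}\|^{\nu/(\nu+1)}$. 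Inserting $\|Z_n\m-\hat{y}\|\le 2\delta_m^{disc}$ and absorbing all constants (including the factor $2^{\nu/(\nu+1)}$) into $L'$ gives the asserted rate on the high-probability event, hence with probability $\to1$.

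The main obstacle is the probabilistic reduction, specifically the concentration of the variance estimator $s_{m,n}^2$ and of the realised measurement error under only a finite-variance assumption while the number of channels $m\to\infty$. The delicate point is that a naive variance bound for a sample variance would require a fourth moment, which is not assumed here; I expect this to be circumvented by exploiting the averaging over the $m$ independent channels, so that a Chebyshev bound on the channel-averaged quantities suffices for convergence in probability, together with the growth coupling of $n$ and $m$ encoded in the choice $n(m,\delta_m^{disc})$. Once this concentration is in place, the deterministic Tikhonov estimates are entirely standard and the rate follows by matching the effective noise level to $\delta_m^{disc}$.
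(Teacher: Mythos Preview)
Your overall architecture matches the paper's: isolate a high-probability event on which $\|Z_{n(m,\delta_m^{disc})}\m-\hat y\|$ is bounded by a multiple of $\delta_m^{disc}$, and on that event invoke the classical deterministic theory for the discrepancy principle (the paper cites Theorem~4.17 and Remark~4.18 of \cite{engl1996regularization}); the corollary then follows from Theorem~\ref{th4} together with Proposition~\ref{idr:intr}. So the strategy is the same.

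There is one technical point you glossed over, and it is exactly where a plain Chebyshev argument would stall. The stopping index $n(m,\delta_m^{disc})$ is \emph{data-dependent} through the sample variance $s_{m,n}^2$, so you cannot simply bound $\|Z_n\m-y\m\|$ for a fixed $n$ and then plug in $n=n(m,\delta_m^{disc})$. The paper handles this in two steps (Lemma~\ref{estlem2}): first it shows that $n(m,\delta)$ falls with high probability into a deterministic interval $I_\varepsilon(m,\delta)$ around $m\E[{\delta_{11}\m}^2]/(c_m^2\delta^2)$ (Proposition~\ref{sec:3:helpprop1}, which in turn rests on a reversed-martingale Doob inequality for $s_{m,n}^2$, Proposition~\ref{sec:5prop1}); second it controls $\sup_{n\in I_\varepsilon}|{\delta_{m,n}^{meas}}^2-\E{\delta_{m,n}^{meas}}^2|$ by recognising $n({\delta_{m,n}^{meas}}^2-\E{\delta_{m,n}^{meas}}^2)$ as a martingale in $n$ and applying the Kolmogorov--Doob maximal inequality. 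Under only a second moment this last step still needs a truncation argument (as you anticipated), which the paper carries out explicitly. Your Chebyshev sketch captures the heuristic but would not by itself deliver the required uniformity in $n$; the martingale structure is what makes the argument go through.
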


 It is a long standing dilemma that solution strategies for inverse problems typically require a priori knowledge about the noise. For example, in the classical deterministic case an upper bound for the error is given or in the stochastic case one restricts to certain classes of distribution (often Gaussian). In \cite{harrach2020beyond} there was for the first time presented a rigorous convergence theory without any knowledge of the error distribution, if one has multiple measurements (strongly bounded in $L^2$) of the right hand side $\hat{y}$. Here we consider semi-discretised measurements under arbitrary unknown white noise.
It is widely known that discretisation has a regularising effect,
see for example \cite{mathe2001optimal} and \cite{hansen2010discrete} for the discretisation in the deterministic setting, \cite{o1986statistical}, \cite{mathe2001optimal}, 
\cite{mathe2003discretization} and \cite{mathe2006regularization} for the statistical frequentist setting and \cite{kaipio2007statistical} and \cite{ito2015inverse} for the Bayesian approach. In applications related to machine learning often one considers discretisation by random sampling, see e.g. \cite{de2006discretization} or \cite{bauer2007regularization}. In general, one can either first regularise the infinite-dimensional problem and then discretise or, as it is done in this article, one first discretises and then regularises the finite-dimensional problem. Fairly often, inverse problems under white noise (see e.g. \cite{donoho1995nonlinear} and \cite{cavalier2002sharp}) are treated the first way and the white noise is modelled as a Hilbert space process operating on $\mathcal{Y}$, see \cite{bissantz2007convergence} and \cite{cavalier2011inverse}. The major challenge of this modelling is that then the measurements are not elements of $\mathcal{Y}$. This implies that one has to restrict to sufficiently smoothing operators and to include correction terms in the convergence rates. Most importantly, the discrepancy principle, one of the most widely used parameter choice rules in practice, cannot be applied due to the unboundedness of the noise. Thus one rather relies on other parameter choice rules, e.g. cross validation \cite{wahba1977practical} or Lepski’s balancing principle \cite{mathe2003geometry}, even though a modified discrepancy principle could be applied  \cite{blanchard2012discrepancy}.
These technical difficulties are not present in the semi-discretised setting considered here. Among the first results on the discrepancy principle in such a setting we want to mention \cite{vogel2002computational}, where a convergence rate analysis is given under the assumption that the singular value decomposition is known. There the regularisation parameter is determined not for the random residual as in \eqref{dp1} but for its squared expectation. While this gives some important insight such a choice is clearly not implementable. Results for the truly data-driven implementable version \eqref{dp1} are presented in \cite{blanchard2018optimal} and \cite{blanchard2018early}, where optimal rates are deduced for polynomially ill-posed problems under Gaussian white noise. We will compare our results to these in more detail in Section \ref{sec:discussion}. In particular we show that the existence of the fourth moment of the error distribution is a crucial requirement for the latter references. A major difference to most of the aforementioned references is that there the variance of the measurement error (respectively the noise level of the white noise process) is assumed to be known. This is justified by the fact that usually little attention is put on the behaviour of the solution as the discretisation dimension grows and in fact the error distribution is assumed to be independent of the size of the discretisation. Here we explicitly allow the error distribution to vary with the size of the discretisation and thus we make the estimation part of the analysis.

To put it in a nut shell, the main result in this work guarantees convergence for arbitrary unknown distribution, as long as one is able to measure repeatedly, under quite general assumptions on the discretisation which are only of qualitative nature and  most importantly are independent of the unknown exact right hand side.
 In this paper we restrict to the discrepancy principle as an a posteriori rule that is known to be challenging in stochastic regularisation even for strongly $L^2$-bounded noise, see \cite{harrach2020beyond} and \cite{Jahn_2020}. Still, we expect that the results can be extended to other a posteriori parameter choice rules as well, since the central tools to handle the stochastic noise, namely Lemma \ref{estlem1} and Lemma \ref{estlem2}, do not depend on the chosen regularisation or parameter choice rule. Finally, if one neither has information about the noise level nor is one able to repeat a measurement solely so-called heuristic parameter choice rules could be used. The term heuristic is referring to the fact that convergence results only hold under a restricted noise analysis. Here we want to mention the quasi-optimality criterion as one the most popular heuristic rules, see e.g. \cite{tikhonov1965use} and \cite{kindermann2018quasi} for results under (almost surely) bounded noise. See also \cite{bauer2008regularization}, where an analysis of the quasi-optimality criterion under white noise in a Bayesian setting is presented.

The rest of the article is organised as follows. In Section \ref{sec:fdr} and Section \ref{sec:idr} we will show the $L^2$-convergence (a.k.a. convergence of the mean squared error) of a priori parameter choice rules and the convergence in probability of the discrepancy principle for the both approaches respectively.  In Section \ref{sec:discussion} we compare the results in detail to existing ones. The proofs are deferred to Section \ref{sec:proofs} and we conclude with a numerical study in Section \ref{sec:num} and some final remarks in Section \ref{sec:con}.

\section{Approach with finite-dimensional residuum}\label{sec:fdr} 
 
We start with a precise and more general definition of our discretisation scheme. Therefore we introduce as follows the discretisation (operators)

\begin{align}\label{discmod}
P_m:\mathcal{Y}\to \R^m,~  y\mapsto \begin{pmatrix} l_1\m(y)\\ ... \\ l_m\m(y)\end{pmatrix}, 
\end{align}

with the corresponding measurements

\begin{equation*}
Y_{ij}\m:=l_j\m(\hat{y})+\delta_{ij}\m.
\end{equation*}

and $\|l_1\m\| = ... = \|l_m\m\|$. That is the measurement channels and also the error distribution may depend on the number $m$ of measurement channels now. We will often use that by the Riesz representation theorem there are unique $(\eta_j\m)_{j\le m, m\in\N}$ such that $l_j\m(y)=(\eta_j\m,y)$ for all $y\in\mathcal{Y}$. For convenience we will assume that  $P_mP_m^*$ is bijective and thus $P_m$ has a singular value decomposition with exactly $m$ (non-zero) singular values. 

 From now on we consider general filter-based regularisations $R_{\alpha}\m:=F_{\alpha}\lk\lk P_mK\rk^*P_mK\rk \lk P_mK\rk^*$, where $(F_\alpha)_{\alpha}$ fulfills Assumption \ref{assfilt} below.
 
 \begin{assumption}[Filter]\label{assfilt}
$(F_\alpha)_{\alpha>0}$ are piecewise continuous real valued functions on $[0,\|K\|^2]$ with 

\begin{equation}\label{assfilt:1}
\lim_{\alpha\to0}\sup_{\varepsilon\le \lambda\le \|K\|^2}\left|F_\alpha(\lambda)-1/\lambda\right|=0
\end{equation}
 for all $\varepsilon>0$ and $\lambda |F_\alpha(\lambda)|\le C_R\in\R$ for all $\lambda\in(0,\|K\|^2]$ and $\alpha>0$.  Moreover it has qualification $\nu_0\ge0$, i.e. $\nu_0$ is maximal such that for all $0\le\nu\le \nu_0$ there exists a constant $C_\nu\in \R$ such that

$$
\sup_{\lambda\in(0,\|K\|^2]}\lambda^\frac\nu2\left| 1 - F_\alpha(\lambda)\lambda\right| \le C_\nu \alpha^\frac{\nu}{2}.
$$

Hereby, for $\nu=0$ the constant $C_0$ is assumed to be known. Finally, there exists a constant $C_F\in\R$ with $|F_\alpha(\lambda)| \le C_F/\alpha$ for all $\alpha>0$ and $\lambda\in(0,\|K\|^2]$.
\end{assumption}

\begin{remark}
Assumption \ref{assfilt} coincides with the classical ones in \cite{engl1996regularization} up to \eqref{assfilt:1}, which is usually replaced by the weaker condition $\lim_{\alpha\to 0} F_\alpha(\lambda) = 1/\lambda$ for all $\lambda\in(0,\|K\|^2]$. However, it is easy to verify that the generating filter of all popular methods, e.g. truncated singular value, (iterated) Tikhonov or Landweber regularisation fulfill Assumption \ref{assfilt}. In all these cases it holds that $C_0=1$.
\end{remark}

We impose the following more abstract condition on the discretisation, which generalises the ones from the introduction.
 
\begin{assumption}[Disretisation for finite-dimensional residuum]\label{disc:fdr}
There exists an injective operator $A\in\mathcal{L}(\mathcal{Y})$ such that $\lim_{m\to\infty}P_m^*P_m y= Ay$ for all  $y\in\mathcal{Y}$.
\end{assumption}

We list some popular discretisation schemes which fulfill Assumption \ref{disc:fdr}, starting with the one from the introduction.

\begin{proposition}\label{fdr:intr}
Assume that $l_j\m = l_j$ for all $j=1,...,m$ and $m\in\N$ with $(l_j)_{j\in\N} \subset \mathcal{L}(\mathcal{Y},\R)$, where $(l_j)_{j\in\N}$ is complete and square-summable, i.e. for all $y\in \mathcal{Y}\setminus \{0\}$ there is a $l_j$ such that $l_j(y)\neq 0$  and there holds $\sum_{j=1}^\infty l_j(y)^2 <\infty$. Then Assumption \ref{disc:fdr} is fulfilled.
\end{proposition}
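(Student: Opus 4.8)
The plan is to compute $P_m^*P_m$ explicitly and to recognise it as the partial sum of an increasing sequence of positive self-adjoint operators, whose strong limit will be the desired operator $A$. By the Riesz representation theorem write $l_j(y)=(\eta_j,y)$ with $\|\eta_j\|=\|l_j\|$ equal for all $j$. A direct computation of the adjoint gives $P_m^*v=\sum_{j=1}^m v_j\eta_j$ for $v\in\R^m$, so that
\begin{equation*}
T_m y:=P_m^*P_my=\sum_{j=1}^m (\eta_j,y)\,\eta_j,\qquad (T_my,y)=\sum_{j=1}^m l_j(y)^2.
\end{equation*}
Each $T_m$ is positive and self-adjoint, and $T_{m+1}-T_m=(\eta_{m+1},\cdot)\eta_{m+1}$ is a positive rank-one operator, so $(T_m)_{m\in\N}$ is monotonically increasing. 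The natural candidate for the limit is $Ay:=\lim_{m\to\infty}T_my$, and I would make this rigorous through the monotone convergence theorem for bounded self-adjoint operators.

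First I would establish a uniform bound $\sup_m\|T_m\|<\infty$. Since $\|T_m\|=\|P_m\|^2=\sup_{\|y\|=1}\sum_{j=1}^m l_j(y)^2$, this amounts to uniform boundedness of the family $P_m:\mathcal{Y}\to\R^m\subset\ell^2$. The square-summability hypothesis gives, for each fixed $y$, that $\sup_m\|P_my\|^2=\sum_{j=1}^\infty l_j(y)^2<\infty$, i.e. the family is pointwise bounded; the Banach--Steinhaus theorem then yields a constant $M$ with $\|T_m\|=\|P_m\|^2\le M^2$ for all $m$. This is the step where the (a priori only pointwise) square-summability is promoted to a genuine uniformity statement.

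With the uniform bound in hand I would prove strong convergence. For $m>k$ the operator $S:=T_m-T_k$ satisfies $0\le S\le T_m\le M^2 I$, hence $\|S\|\le M^2$ and $S^2\le\|S\|S\le M^2 S$ by functional calculus, so that
\begin{equation*}
\|(T_m-T_k)y\|^2=(S^2y,y)\le M^2(Sy,y)=M^2\Big(\sum_{j=1}^m l_j(y)^2-\sum_{j=1}^k l_j(y)^2\Big).
\end{equation*}
Because $\big(\sum_{j=1}^m l_j(y)^2\big)_m$ is an increasing sequence bounded above, it is Cauchy, so the right-hand side tends to $0$ and $(T_my)_m$ is Cauchy in $\mathcal{Y}$. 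This defines a bounded positive self-adjoint $Ay:=\lim_m T_my$ with $\|A\|\le M^2$ and $\lim_m P_m^*P_m y=Ay$, which is exactly the convergence required in Assumption \ref{disc:fdr}. Finally, injectivity follows from completeness: if $Ay=0$ then $(Ay,y)=\sum_{j=1}^\infty l_j(y)^2=0$, forcing $l_j(y)=0$ for every $j$ and hence $y=0$. The one genuine obstacle is the passage from pointwise/weak information to norm convergence; the monotonicity of $(T_m)$ together with the operator inequality $S^2\le\|S\|S$ is precisely what makes this passage work, so that is where I would focus the care.
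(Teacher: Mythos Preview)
Your proof is correct and complete. The paper takes a slightly different, more concise route: it embeds $\R^m\subset\ell^2(\N)$ and observes that $P_my\to P_\infty y$ in $\ell^2$ (the tail $\sum_{j>m}l_j(y)^2$ vanishes by square-summability), then applies the bounded operator $P_\infty^*$ to obtain $P_m^*P_my=P_\infty^*(P_my)\to P_\infty^*P_\infty y=:Ay$; injectivity again comes from completeness. Both arguments invoke Banach--Steinhaus for the uniform bound $\sup_m\|P_m\|<\infty$. The paper's factorisation $A=P_\infty^*P_\infty$ gives an explicit description of the limit operator, whereas your argument via monotonicity and the inequality $S^2\le\|S\|S$ is more intrinsic---it never leaves $\mathcal{Y}$ and does not require introducing the auxiliary space $\ell^2$ or the limit operator $P_\infty$. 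Your approach is in fact the standard proof of the monotone convergence theorem for self-adjoint operators, so it is slightly more general in spirit; the paper's is shorter because the concrete structure $T_my=\sum_{j\le m}(\eta_j,y)\eta_j$ already \emph{is} a partial sum in an identifiable Hilbert space.
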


Often the limit operator $A$ will be the identity $Id=Id_\mathcal{Y}$, e.g. in the case when we  discretise by box or hat functions.

\begin{proposition}\label{fdr:box}
Assume that $\mathcal{Y}=L^2([0,1])$ and we discretise by box functions, i.e. $l_j\m = (\eta_j\m,\cdot)$ with $\eta_j\m = \sqrt{m} \chi_{[\frac{j-1}{m},\frac{j}{m})}$ for $j=1,...,m$ and $m\ge 2$. Then Assumption \ref{disc:fdr} is fulfilled with $A=Id$.
\end{proposition}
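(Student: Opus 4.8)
The plan is to recognise $P_m^*P_m$ as an orthogonal projection and then run the standard density-plus-uniform-bound argument. First I would compute the adjoint: for $c\in\R^m$ and $y\in\mathcal{Y}$ we have $(P_m^*c,y)=\sum_{j=1}^m c_j l_j\m(y)=\lk\sum_{j=1}^m c_j\eta_j\m,y\rk$, so $P_m^*c=\sum_{j=1}^m c_j\eta_j\m$ and hence $P_m^*P_m y=\sum_{j=1}^m(\eta_j\m,y)\eta_j\m$. Writing $I_j=[\frac{j-1}{m},\frac{j}{m})$, the box functions satisfy $(\eta_i\m,\eta_j\m)=m\,|I_i\cap I_j|=\delta_{ij}$, so $\{\eta_1\m,\dots,\eta_m\m\}$ is orthonormal and $P_m^*P_m=:Q_m$ is precisely the orthogonal projection onto $V_m:=\mathrm{span}\{\eta_1\m,\dots,\eta_m\m\}$, the space of functions that are constant on each cell $I_j$. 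Concretely $(Q_m y)(s)=m\int_{I_j}y(t)\,dt$ for $s\in I_j$, i.e.\ $Q_m$ replaces $y$ by its cellwise average.

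Two structural facts then do the work: $\|Q_m\|=1$ for every $m$, since orthogonal projections are contractions, and $\bigcup_m V_m$ is dense in $L^2([0,1])$. I would first verify convergence on the dense subset $C([0,1])$. A continuous $g$ on the compact interval $[0,1]$ is uniformly continuous, so with $\omega_g$ its modulus of continuity we get $\sup_{s\in I_j}\big|g(s)-m\int_{I_j}g\,dt\big|\le\omega_g(1/m)$ on each cell; hence $\|Q_m g-g\|_\infty\le\omega_g(1/m)\to0$ and a fortiori $\|Q_m g-g\|_{L^2}\to0$.

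To extend this to an arbitrary $y\in\mathcal{Y}$ I would invoke the uniform bound $\|Q_m\|=1$ in an $\varepsilon$-argument. Given $\varepsilon>0$, choose $g\in C([0,1])$ with $\|y-g\|<\varepsilon$; then
\[
\|Q_m y-y\|\le\|Q_m(y-g)\|+\|Q_m g-g\|+\|g-y\|\le 2\varepsilon+\|Q_m g-g\|,
\]
so $\limsup_{m\to\infty}\|Q_m y-y\|\le 2\varepsilon$, and letting $\varepsilon\to0$ yields $P_m^*P_m y=Q_m y\to y$ for every $y$. Since the limit operator is $A=Id$, which is bounded and injective, Assumption \ref{disc:fdr} is satisfied.

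One cannot shortcut this via Proposition \ref{fdr:intr}, because there the functionals are required to be independent of $m$, whereas the box functions $\eta_j\m$ genuinely depend on the mesh. The only nontrivial estimate is the cellwise-average bound via uniform continuity; the remainder is the routine pattern that orthogonal projections onto subspaces with dense union converge strongly to the identity, so I expect no real obstacle beyond keeping the orthonormality and the projection identification clean.
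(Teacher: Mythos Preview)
Your proof is correct and follows essentially the same approach as the paper: identify $P_m^*P_m$ as the orthogonal projection onto piecewise constants, establish convergence on a dense subset, and pass to all of $L^2$ via the uniform bound $\|P_m^*P_m\|\le 1$. Your version is more explicit---you work with $C([0,1])$ and a modulus-of-continuity estimate, whereas the paper works with smooth functions and the density of step functions---but the skeleton is identical.
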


\begin{proposition}\label{fdr:hat}
Assume that $\mathcal{Y}=L^2([0,1])$ and we discretise by hat functions, i.e. $l_j\m=(\eta_j\m,\cdot)$ with

\begin{enumerate}
\item $\frac{\eta_j\m}{\sqrt{m-1}} =\left(1-j+(m-1)x\right) \chi_{[\frac{j-1}{m-1},\frac{j}{m-1})}+ \left(j+1-(m-1)x\right)\chi_{[\frac{j}{m-1},\frac{j+1}{m-1})}$ for $j=2,...,m-1$,\\
\item $\eta_1\m = \sqrt{2(m-1)}(1+j-(m-1)x)\chi_{[\frac{j}{m-1},\frac{j+1}{m-1})}$,\\
\item $\eta_m\m = \sqrt{2(m-1)}((m-1)x-j+1)\chi_{[\frac{j-1}{m-1},\frac{j}{m-1}]}$.
\end{enumerate}

Then Assumption \ref{disc:fdr} is fulfilled with $A=Id$.
\end{proposition}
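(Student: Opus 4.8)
The plan is to verify Assumption \ref{disc:fdr} directly by showing that $P_m^*P_m y \to y$ in $\mathcal{Y}=L^2([0,1])$ for every $y$, so that the limit operator is $A=Id$, which is trivially bounded and injective. Writing $l_j\m=(\eta_j\m,\cdot)$, the adjoint acts by $P_m^* c=\sum_{j=1}^m c_j\,\eta_j\m$, whence
\[
P_m^*P_m y=\sum_{j=1}^m (\eta_j\m,y)\,\eta_j\m.
\]
Away from the boundary $\eta_j\m=\sqrt{m-1}\,\phi_j$, where $\phi_j$ is the standard piecewise-linear hat of height one centred at the node $x_j=(j-1)/(m-1)$ on the uniform grid of width $h=1/(m-1)$, while $\eta_1\m,\eta_m\m$ are the corresponding boundary half-hats carrying the extra factor $\sqrt2$; the normalisation is chosen exactly so that $\|\eta_j\m\|^2=2/3$ for all $j$, consistent with $\|l_1\m\|=\dots=\|l_m\m\|$. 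My strategy is the standard two-step argument: (i) a uniform operator bound $\sup_m\|P_m^*P_m\|<\infty$, and (ii) convergence $P_m^*P_m y\to y$ on a dense subset, after which the claim for arbitrary $y$ follows by an $\varepsilon/3$ estimate.

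For (i) I would use $\|P_m^*P_m\|=\|P_m\|^2=\lambda_{\max}(G_m)$, where $G_m=P_mP_m^*$ is the Gram matrix with entries $(G_m)_{jk}=(\eta_j\m,\eta_k\m)$. Since hats overlap only with their immediate neighbours, $G_m$ is tridiagonal; the interior entries are $(m-1)(\phi_j,\phi_j)=2/3$ on the diagonal and $(m-1)(\phi_j,\phi_{j\pm1})=1/6$ off it, with $O(1)$ boundary entries. By Gershgorin's theorem every eigenvalue then lies in a bounded interval independent of $m$ (interior rows give $|\lambda-2/3|\le1/3$, the boundary rows an analogous bound), so $\lambda_{\max}(G_m)\le C$ uniformly in $m$, yielding the required operator bound.

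For (ii) I would take $y$ Lipschitz, these being dense in $L^2([0,1])$. Using $\int\phi_j=h$ and $\mathrm{supp}\,\phi_j\subset[x_j-h,x_j+h]$, a one-node quadrature estimate gives $(\phi_j,y)=h\,y(x_j)+r_j$ with $|r_j|\le C\,\mathrm{Lip}(y)\,h^2$, so that $(\eta_j\m,y)\eta_j\m=(m-1)(\phi_j,y)\phi_j$ gives
\[
P_m^*P_m y=\sum_{j=1}^m y(x_j)\phi_j+\sum_{j=1}^m (m-1)r_j\,\phi_j=I_m y+E_m,
\]
where $I_m y$ is the piecewise-linear interpolant of $y$. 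Since $I_m y\to y$ uniformly, hence in $L^2$, it remains to show $\|E_m\|\to0$: here $|(m-1)r_j|\le C\,\mathrm{Lip}(y)\,h$, and expanding $\|E_m\|^2=\sum_{j,k}(m-1)^2 r_j r_k (\phi_j,\phi_k)$ over the $O(m)$ nonzero neighbouring Gram entries of size $O(h)$ bounds it by $C\,\mathrm{Lip}(y)^2 h^2\cdot m\cdot O(h)=O(h^2)\to0$. Combining, $P_m^*P_m y\to y$ on the dense set, and together with the uniform bound from (i) the convergence extends to all $y\in L^2([0,1])$.

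The genuinely delicate point is (ii): because the single-node quadrature error $r_j=O(h^2)$ is multiplied by the large factor $m-1\sim h^{-1}$ and summed over $\sim h^{-1}$ nodes, one must track the accumulated error rather than bound terms crudely — the saving comes from exploiting that the $\phi_j$ are almost orthogonal (bounded overlap), so the $L^2$ norm of $E_m$ does not pick up the full factor $m$. The boundary half-hats require a separate but entirely analogous estimate, with the extra $\sqrt2$ accounted for by the halved support.
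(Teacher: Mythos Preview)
Your proof is correct and follows essentially the same overall scheme as the paper --- density plus a uniform operator bound --- but the execution of step (ii) differs in a useful way.

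For the uniform bound, both you and the paper use the tridiagonal Gram matrix: you invoke Gershgorin, the paper the equivalent estimate $\|G_m\|\le\sqrt{\|G_m\|_1\|G_m\|_\infty}=7/6$.

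For convergence on the dense set, the paper takes smooth $y$, introduces the interpolant $y_m=\sum_j y(x_j)\phi_j$, and splits
\[
\|y-P_m^*P_m y\|\le\|(I-P_m^*P_m)(y-y_m)\|+\|y_m-P_m^*P_m y_m\|.
\]
The first term is controlled by $\|y-y_m\|$ and the uniform bound. For the second, the paper computes the explicit five-term stencil $P_m^*P_m\eta_j\m=\tfrac{1}{36}\eta_{j-2}\m+\tfrac{2}{9}\eta_{j-1}\m+\tfrac{1}{2}\eta_j\m+\tfrac{2}{9}\eta_{j+1}\m+\tfrac{1}{36}\eta_{j+2}\m$ and then observes that the coefficient of $\eta_j\m$ in $(I-P_m^*P_m)y_m$ is a second-difference-like combination $\tfrac12 y(x_j)-\tfrac29(y(x_{j+1})+y(x_{j-1}))-\tfrac1{36}(y(x_{j+2})+y(x_{j-2}))$, which is $O(\sup|y'|/m)$ by the mean value theorem.

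Your route bypasses the stencil computation entirely: by the one-node quadrature $(\phi_j,y)=h\,y(x_j)+O(Lh^2)$ you get $P_m^*P_m y=I_m y+E_m$ directly, and then exploit the bounded overlap of the $\phi_j$ to sum the errors in $L^2$. This is shorter and conceptually cleaner; the paper's approach, on the other hand, yields explicit constants and makes the near-identity structure of $P_m^*P_m$ in the hat basis visible. Both arguments need the same separate bookkeeping for the two boundary half-hats, which you note.
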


\subsection{A priori regularisation with finite-dimensional residuum}\label{sec:fdr:apriori}

We start with a priori regularisations and impose the following assumption on the error, which is weaker than the one in the introduction. Basically, solely independence on each measurement channel and a uniform boundedness of the variances are required.

\begin{assumption}[Error for a priori regularisation]\label{err:apriori}
For all $m,j\in\N$ the random variables $\left(\del\right)_{i\in\N}$ are independent with zero mean and there exists $C_d\in\R$ with

$$\sup_{\substack{m,i,j\in\N\\ j\le m}} \E[{\del}^2]\le C_d.$$
\end{assumption}

Since the sample variance depends on the data we set $s_{m,n}^2=1$ here, such that $\delta_{m,n}^{est} = \sqrt{m/n}$. This has the advantage that the regularisation parameter $\alpha$ is independent of the measurements $Y_{ij}\m$. We obtain convergence in $L^2$ for a priori regularisation. 
\begin{theorem}\label{th1}
Assume  that $K$ is injective, the discretisation fulfills Assumption \ref{disc:fdr}, the error is accordingly to Assumption \ref{err:apriori} and $(F_{\alpha})_{\alpha>0}$ fulfills Assumption \ref{assfilt}. Take an a priori parameter choice rule with $\alpha(\delta) \stackrel{\delta\to 0 }{\longrightarrow} 0$ and $\delta/\sqrt{\alpha(\delta)}\stackrel{\delta\to 0 }{\longrightarrow} 0$. Then there holds

$$\lim_{\substack{m,n\to\infty\\ m/n\to 0}}\E\left\| R_{\alpha(\delta_{m,n}^{est})}\m\bar{Y}_n\m-K^+\hat{y}\right\|^2=0.$$

\end{theorem}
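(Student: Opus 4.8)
The plan is to perform a standard bias--variance decomposition of the error, but relative to the discretised operators $P_mK$, and to control the three resulting pieces separately using the filter assumptions and Assumption~\ref{disc:fdr}. First I would insert the intermediate quantities $R_\alpha\m P_m\hat y$ (the regularised reconstruction from the noise-free discretised data) and split
$$
\E\left\| R_{\alpha}\m\bar Y_n\m-K^+\hat y\right\|^2 \le 2\,\E\left\| R_{\alpha}\m\lk\bar Y_n\m-P_m\hat y\rk\right\|^2 + 2\left\| R_{\alpha}\m P_m\hat y-K^+\hat y\right\|^2,
$$
where $\alpha=\alpha(\delta_{m,n}^{est})$. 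The first term is the \emph{propagated data error} and the second is the \emph{approximation (bias) error}; each will be shown to vanish in the stated limit.

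For the data-error term I would use the filter bound $\lambda|F_\alpha(\lambda)|\le C_R$ together with $|F_\alpha(\lambda)|\le C_F/\alpha$ to estimate the operator norm of $R_\alpha\m$ applied to the centred data. Writing the centred average as $\bar Y_n\m-P_m\hat y=\frac1n\sum_{i=1}^n(\delta_{ij}\m)_j$, the i.i.d.\ structure across repetitions and Assumption~\ref{err:apriori} give $\E\|\bar Y_n\m-P_m\hat y\|_{\R^m}^2=\frac1n\sum_{j=1}^m\E[{\delta_{1j}\m}^2]\le C_d\,m/n=C_d\,(\delta_{m,n}^{est})^2$, since here $\delta_{m,n}^{est}=\sqrt{m/n}$. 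Combining this with the standard spectral estimate $\E\| R_\alpha\m v\|^2\le (C_F^2/\alpha)\,\E\|v\|^2$ (obtained from $\lambda F_\alpha(\lambda)^2\le C_F/\alpha\cdot C_R$ on the range, i.e.\ bounding $R_\alpha\m$ via its singular values of $P_mK$) yields a bound of order $(\delta_{m,n}^{est})^2/\alpha=\delta^2/\alpha(\delta)$, which tends to $0$ by the hypothesis $\delta/\sqrt{\alpha(\delta)}\to0$. A point needing care is that $P_mK$ and its singular system vary with $m$, so the spectral bounds must be applied uniformly in $m$; this is where the normalisation $\|l_1\m\|=\dots=\|l_m\m\|$ and the existence of a bounded limit operator $A$ from Assumption~\ref{disc:fdr} keep $\|P_m\|$, and hence $\|P_mK\|$, controlled.

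For the approximation term I would argue that, as $\delta\to0$ we have $\alpha\to0$, and I want $\| R_\alpha\m P_m\hat y-K^+\hat y\|\to0$. The natural route is to write $R_\alpha\m P_m\hat y=F_\alpha((P_mK)^*P_mK)(P_mK)^*P_m\hat y$ and compare it to $K^+\hat y=F_\alpha(K^*K)K^*\hat y$ in the limit $\alpha\to0$ via the strong-convergence property \eqref{assfilt:1}, which guarantees $F_\alpha(\cdot)\cdot\to\mathrm{Id}$ on the range. The essential structural input is that Assumption~\ref{disc:fdr}, $P_m^*P_m\to A$ strongly with $A$ injective, forces $(P_mK)^*P_mK=K^*P_m^*P_mK\to K^*AK$ strongly, and together with injectivity of $K$ this lets one pass $F_\alpha$ through using the uniform bound $\lambda|F_\alpha(\lambda)|\le C_R$ and a dominated-convergence / functional-calculus argument. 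I expect \textbf{this approximation term to be the main obstacle}, because the convergence is genuinely a double limit in $m$ and $\alpha$ (equivalently $\delta$): one must ensure the $m$-dependent operators converge to the right infinite-dimensional objects \emph{before} or \emph{commensurately with} sending $\alpha\to0$, and that the extra injective operator $A$ does not spoil the filter's regularising identity. The cleanest way is probably to prove a uniform-in-$m$ bound $\| R_\alpha\m P_m\hat y-K^+\hat y\|\le \omega(\alpha)+e_m(\alpha)$ with $\omega(\alpha)\to0$ as $\alpha\to0$ and $e_m(\alpha)\to0$ as $m\to\infty$ for fixed $\alpha$, and then to invoke the coupling $\alpha=\alpha(\delta_{m,n}^{est})\to0$ while exploiting that along $m/n\to0$ the parameter $\alpha$ may be taken to $0$ slowly enough that the discretisation has "caught up"; this interchange of limits is the delicate step and is exactly what the qualitative Assumption~\ref{disc:fdr} is designed to permit.
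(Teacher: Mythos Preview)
Your overall architecture---the bias--variance split and the treatment of the propagated data error via $\|R_\alpha\m\|^2\le C_RC_F/\alpha$ together with $\E\|\bar Y_n\m-P_m\hat y\|^2\le C_d\,m/n$---matches the paper exactly (the paper uses the exact bias--variance identity rather than a factor-of-two triangle inequality, but this is immaterial).

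The gap is in the approximation term. Your diagnosis that it is the main obstacle is correct, but the mechanism you propose is not the one that works. Two points.

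First, you write that $K^*P_m^*P_mK\to K^*AK$ \emph{strongly}; in fact, because $K$ is compact, this convergence holds in \emph{operator norm}. This upgrade is the content of the paper's Lemma~\ref{proof:lem1} and is what makes the double limit tractable: norm convergence gives convergence of the eigenvalues $\lambda_j\m\to\lambda_j^{(\infty)}$ and of the finite-rank eigenprojections, which in turn yields the uniform tail estimate
\[
\limsup_{m\to\infty}\sum_{j>M}(\hat x,v_j\m)^2\le\varepsilon
\]
for some $M=M(\hat x,\varepsilon)$ independent of $m$. This is exactly the device that decouples the limits: the tail contributes at most $\varepsilon$ for all large $m$, and on the first $M$ modes $|1-F_\alpha(\lambda_j\m)\lambda_j\m|\to0$ as $\alpha\to0$ \emph{uniformly in $m$}, since the $\lambda_j\m$ stay bounded below by $\lambda_M^{(\infty)}/2$. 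Your proposed decomposition $\omega(\alpha)+e_m(\alpha)$, with $e_m$ allowed to depend on $\alpha$, does not by itself resolve the interchange; the tail estimate does.

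Second, the natural intermediate object is not $F_\alpha(K^*K)K^*\hat y$---the limit of $(P_mK)^*P_mK$ is $K^*AK$, not $K^*K$, so that comparison is off target---but rather $(P_mK)^+P_m\hat y=P_{\mathcal N(P_mK)^\perp}\hat x$. The paper splits
\[
\|R_\alpha\m P_m\hat y-K^+\hat y\|\le\|P_{\mathcal N(P_mK)}\hat x\|+\|(P_mK)^+P_m\hat y-R_\alpha\m P_m\hat y\|,
\]
handles the first piece by a separate weak-limit argument using injectivity of $AK$ (Proposition~\ref{proofs:prop1}), and controls the second via the eigenvector tail estimate just described.
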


\subsection{A posteriori regularisation with finite-dimensional residuum}

We turn our attention to the discrepancy principle. The regularisation parameter is determined through

\begin{equation}
\left\lVert P_mK R_{\alpha}\m \bar{Y}_n\m -\bar{Y}_n\m\right\rVert \approx \delta_{m,n}^{est}
\end{equation}

and in the definition of $\delta_{m,n}^{est}=\sqrt{s_{m,n}^2 m/n}$ we choose the mean of the sample variances $$s_{m,n}^2:=\frac{1}{m}\sum_{j=1}^m \frac{1}{n-1}\sum_{i=1}^n\lk Y_{ij}\m - \frac{1}{n}\sum_{l=1}^n Y_{lj}\m\rk^2,$$ since we will need a sharp estimation of the right hand side.
 We implement the discrepancy principle with Algorithm 1.

 \begin{algorithm}[H]\label{algorithm1}
 \caption{Discrepancy principle with fdr approach}
\begin{algorithmic}[1]
\STATE Choose $\tau>C_0$ (from Assumption \ref{assfilt}) and $q\in (0,1)$;
\STATE Input: Measurements $Y_{ij}\m=l_j\m(\hat{y})+\delta_{ij}\m$ with $i\le n$ and $j\le m$;
\STATE Set $\bar{Y}_n\m=\frac{1}{n}\sum_{i=1}^n\begin{pmatrix} Y_{i1}\m \\ ... \\Y_{im}\m\end{pmatrix}$;
\STATE Set $\delta_{m,n}^{est} = \sqrt{\frac{m}{n} \frac{1}{m}\sum_{j=1}^m \frac{1}{n-1}\sum_{i=1}^n\left(Y_{ij}\m-\frac{1}{n}\sum_{l=1}^n Y_{lj}\m\right)^2}$;
\STATE $k=0$;
  \WHILE{$\left\| \begin{pmatrix} l_1\m(KR_{q^k}\m \bar{Y}_n\m) \\ ... \\ l_m\m(K R_{q^k}\m\bar{Y}_n\m)\end{pmatrix}-\bar{Y}_n\m\right\| > \tau\delta_{m,n}^{est}$}
  \STATE $k=k+1$;
  \ENDWHILE
  \STATE $\alpha_{m,n}=q^k$;
\end{algorithmic}
\end{algorithm}

 Algorithm $1$  terminates (with a probability tending to $1$ for $m\to\infty$) if $K$ has dense range and (for $m$ large enough) $\E (Y_{11}\m-\E Y_{11}\m)^2>0$, for details see \cite{harrach2020beyond}. We now extend the assumptions of the error in the introduction.
\begin{assumption}[Error for a posteriori regularisation]\label{err:disc}
 It holds that either
  \begin{enumerate}
  \item the random variables $\left(\delta_{ij}^{(m)}\right)_{i,j,m\in\N}$ are i.i.d. with zero mean and bounded variance, or
  \item there are $C_d\in\R$ and $p>1$ such that for all $m\in\N$ the random variables $\left(\delta_{ij}\m\right)_{ i,j\in\N}$ are i.i.d with zero mean and $\frac{\E\left|\del\right|^{2p}}{\left(\E{\del}^2\right)^p}\le C_d$.
  \end{enumerate}
  
\end{assumption}

The main difference between Assumption \ref{err:disc}.1 and \ref{err:disc}.2 is that for the latter the error distribution may vary with $m$, to the cost of a uniform moment condition.

\begin{remark}
Assumption \ref{err:disc}.2 guarantees that the error distribution does not degenerate too much. It is trivially fulfilled if e.g. $\delta_{ij}\m \stackrel{d}{=} c_m X$, with $\E |X|^{2p} < \infty, (c_m)_{m\in\N}\subset\R \setminus \{0\}$. 
\end{remark}

Now we are ready to prove convergence of the discrepancy principle. In contrast to the previous section where we showed convergence in $L^2$ for a priori regularisation methods, the result will now be on convergence in probability (compare this to the counter example in 3.1 in \cite{harrach2020beyond}). 
\begin{theorem}\label{th2}
Assume that $K$ is injective with dense range and that the discretisation fulfills Assumption \ref{disc:fdr} and that the error is accordingly to Assumption \ref{err:disc} and $(F_\alpha)_{\alpha>0}$ fulfills Assumption \ref{assfilt} with a qualification $\nu_0>1$. Then, with $\alpha_{m,n}$ the output of Algorithm 1

$$\lim_{\substack{m,n\to\infty\\ m/n\to 0}}\mathbb{P}\left(\left\|R_{\alpha_{m,n}}\m\bar{Y}_n\m-K^+\hat{y}\right\|\ge \varepsilon\right)=0$$

for all $\varepsilon>0$.
\end{theorem}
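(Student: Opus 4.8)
The plan is to reduce the convergence of the discrepancy principle to a comparison with a "noise-free" discretised problem and then to control the stochastic fluctuations uniformly using the averaging effect. First I would introduce the deterministic surrogate: let $y\m := P_m\hat{y} \in \R^m$ denote the exact (noise-free) discretised right hand side, and let $\alpha_{m}^{det}$ be the parameter that the discrepancy principle would select if it were applied to $y\m$ with the deterministic discretisation error in place of $\delta_{m,n}^{est}$. The key splitting is
\begin{equation*}
\left\|R_{\alpha_{m,n}}\m\bar{Y}_n\m-K^+\hat{y}\right\| \le \left\|R_{\alpha_{m,n}}\m\bar{Y}_n\m - R_{\alpha_{m,n}}\m y\m\right\| + \left\|R_{\alpha_{m,n}}\m y\m - K^+\hat{y}\right\|,
\end{equation*}
where the first term is a pure \emph{propagated noise} term and the second is a \emph{deterministic approximation} term. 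Using Assumption \ref{assfilt} (in particular $|F_\alpha(\lambda)|\le C_F/\alpha$ and the bound $\lambda|F_\alpha(\lambda)|\le C_R$), the propagated noise term is controlled by $\|R_\alpha\m\|\cdot\|\bar{Y}_n\m - y\m\| \lesssim \alpha^{-1/2}\|\bar{Y}_n\m - y\m\|$, and by the averaging the measurement error $\|\bar{Y}_n\m - y\m\|$ has expected square of order $m/n$, which matches $\delta_{m,n}^{est}$.

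The crucial point is that the discrepancy principle self-regularises: whatever $\alpha_{m,n}$ Algorithm 1 outputs, the residual $\|P_mK R_{\alpha_{m,n}}\m \bar{Y}_n\m - \bar{Y}_n\m\|$ is within a factor $q$ of $\tau\delta_{m,n}^{est}$, so the selected parameter cannot be too small relative to the noise level. I would make this quantitative in the classical way: on the event that $\delta_{m,n}^{est}$ is a faithful proxy for $\|\bar{Y}_n\m - y\m\|$ (up to constants, with probability tending to $1$), one gets a lower bound $\alpha_{m,n} \gtrsim (\text{noise level})^2$, hence $\alpha_{m,n}^{-1/2}\|\bar{Y}_n\m - y\m\|$ stays bounded, while simultaneously $\alpha_{m,n}\to 0$ in probability because the noise level $\delta_{m,n}^{est}\to 0$ (this is exactly where $m/n\to 0$ enters, via Proposition \ref{sec:5prop1}). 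The qualification $\nu_0>1$ is what I expect to need to convert the residual bound at the selected $\alpha$ into a bound on the reconstruction error in the source-free regime, via the standard interpolation/moment inequality for filter functions; it guarantees the deterministic term $\|R_{\alpha_{m,n}}\m y\m - K^+\hat{y}\|\to 0$ as $\alpha_{m,n}\to 0$ and $m\to\infty$, using the injectivity of $K$, the dense range, and Assumption \ref{disc:fdr} (so that $P_m^*P_m \to A$ injective and the discretised operators approximate $K$ appropriately).

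The step I expect to be the main obstacle is the uniform stochastic control linking $\delta_{m,n}^{est}$ to the true measurement error \emph{simultaneously over all candidate parameters} $\alpha = q^k$, since $\alpha_{m,n}$ is itself random and data-dependent. This is precisely what Lemma \ref{estlem1} and Lemma \ref{estlem2} are designed to handle, and I would invoke them to show (i) that $s_{m,n}^2$ concentrates around the mean true variance so that $\delta_{m,n}^{est} \asymp \|\bar{Y}_n\m - y\m\|$ with high probability, and (ii) that the propagated-noise quantity $\sup_\alpha \|R_\alpha\m(\bar{Y}_n\m - y\m)\|$ behaves as the deterministic theory predicts. The delicate part is that under Assumption \ref{err:disc}.2 the error distribution varies with $m$, so only a \emph{uniform} moment condition ($2p$-th moments, $p>1$) is available; controlling the fourth-moment-type fluctuations of the sample variance and of the averaged noise then requires the moment bound $\E|\del|^{2p}/(\E{\del}^2)^p \le C_d$ rather than a fixed distribution, and matching the rate of concentration against $m/n\to 0$ is where the argument is tightest. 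Once these two lemmas furnish the high-probability two-sided comparison between $\delta_{m,n}^{est}$ and the actual error, the deterministic discrepancy-principle estimates close the argument and yield convergence in probability for every $\varepsilon>0$.
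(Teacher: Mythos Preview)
Your overall architecture --- split into propagated noise plus deterministic approximation, use concentration of $\delta_{m,n}^{est}$ around the true measurement error, and exploit the two-sided residual bounds of the discrepancy principle --- matches the paper. However, several of the specific mechanisms you describe are misplaced, and one of them leaves a genuine gap.

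The claim that the discrepancy principle yields ``$\alpha_{m,n}\gtrsim(\text{noise level})^2$, hence $\alpha_{m,n}^{-1/2}\|\bar Y_n\m-y\m\|$ stays bounded'' is the problem. Without a source condition this lower bound on $\alpha_{m,n}$ is not available, and even if it were, a merely \emph{bounded} noise term would not give convergence to zero. What the paper actually proves (and what is needed) is the sharper statement $\delta_{m,n}^{est}/\sqrt{\alpha_{m,n}}\to 0$ in probability. This comes from the lower residual bound $\tau\delta_{m,n}^{est}<\|(P_mKR_{\alpha_{m,n}/q}\m-Id)\bar Y_n\m\|$, which after peeling off the noise gives $\delta_{m,n}^{est}\lesssim\|(P_mKR_{\alpha_{m,n}/q}\m-Id)P_m\hat y\|$; the right-hand side is then shown to be $o(\sqrt{\alpha_{m,n}})$ \emph{uniformly in $m$} (this is Proposition~\ref{proof:prop1}), and it is precisely this step that consumes the qualification $\nu_0>1$. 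So the qualification enters through the propagated-noise term, not through the deterministic term as you suggest.

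For the deterministic term $\|R_{\alpha_{m,n}}\m P_m\hat y-K^+\hat y\|$ the paper does not route through ``$\alpha_{m,n}\to0$'' at all. It splits along the first $M$ singular directions of $P_mK$, bounds the leading part by ${\sigma_M\m}^{-2}\|(P_mKR_{\alpha_{m,n}}\m-Id)P_m\hat y\|$ (which is $\lesssim\delta_{m,n}^{est}$ directly from the \emph{upper} discrepancy bound), and bounds the tail $\sum_{j>M}(\hat x,v_j\m)^2$ by $\varepsilon$ \emph{uniformly in $m$}. That last uniform tail control is the content of Lemma~\ref{proof:lem1} (uniform convergence $K^*P_m^*P_mK\to K^*AK$ with $K^*AK$ injective, plus eigenspace convergence), which is the essential technical ingredient supplied by Assumption~\ref{disc:fdr}; you do not mention it, and without it neither your route nor the paper's can close the approximation estimate uniformly over the discretisation level.

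Finally, Lemma~\ref{estlem2} belongs to the infinite-dimensional-residuum approach (Theorem~\ref{th4}) and plays no role here; only Lemma~\ref{estlem1} is used, and no uniform-in-$\alpha$ control of $\|R_\alpha\m(\bar Y_n\m-y\m)\|$ is needed --- the scalar concentration of $\|\bar Y_n\m-P_m\hat y\|$ around $\delta_{m,n}^{est}$ together with the single-$\alpha$ bound $\|R_\alpha\m\|\le\sqrt{C_RC_F/\alpha}$ suffices.
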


Corollary \ref{cor1} is an easy consequence of Theorem \ref{th2} and Proposition \ref{fdr:intr}.
 We conclude the section with a remark regarding Assumption \ref{err:disc}.

\begin{remark}
As already mentioned Assumption \ref{err:disc} excludes distributions which are too degenerated and guarantees that $\E {\delta_{11}\m}^2$ is in some sense uniformly estimatable. We quickly sketch what can go wrong if the distributions degenerate too much.  Assume that $(\del)_{ij}$ are i.i.d. for all $m\in\N$, with $$\mathbb{P}(\del=x) = \begin{cases} \frac{1}{m^4}  & \mbox{ for }x=-\sqrt{m^4-1}\\
\frac{m^4-1}{m^4} & \mbox{ for } x=1/\sqrt{m^4-1}\end{cases}.$$
Thus $\E \delta_{11}\m = 0$ and $\E {\delta_{11}\m}^2 = 1$ but for any $p>1$
$$\frac{\E \left|\delta_{11}\m\right|^{2p}}{\left(\E{\delta_{11}\m}^2\right)^{p}}\ge \frac{1}{m^4}\left|\sqrt{m^4-1}\right|^{2p}=\left(1-\frac{1}{m^4}\right)\left|m^4-1\right|^p \to \infty$$ as $m\to\infty$. Thus Assumption \ref{err:disc} is violated and with the choice $n(m)=m^2$ it holds that $\lim_{m\to\infty} \frac{m}{n(m)}= 0$, but we have that

\begin{align*}
\mathbb{P}\left( \delta_{m,n(m)}^{est} = 0\right)& = \mathbb{P}\left( s_{m,n(m)}^2=0\right)\\
&= \mathbb{P}\left( \delta_{ij}\m = 1/\sqrt{m^4-1}~,~ i =1,...,m^2,j=1,...,m\right)\\
 &= \left(1-\frac{1}{m^4}\right)^{m^3} = \left( \left(1-\frac{1}{m^4}\right)^{m^4}\right)^\frac{1}{m}\to 1
\end{align*}
as $m\to\infty$. Thus with asymptotic probability $1$ the discrepancy principle cannot even be applied for this choice of $n$. The number of repetitions $n(m)=m^2$ is simply too small to estimate the variance of $\delta_{11}\m$ adequately.
\end{remark}

\section{Approach with infinite-dimensional residuum}\label{sec:idr}

We turn our attention to the second approach (\ref{tikhapprox2}). The strategy is to  use the measured data to construct virtual measurements in the infinite-dimensional Hilbert space $\mathcal{Y}$ and then to regularise the infinite-dimensional problem using classical methods. For the regularisation we will need in the following an upper bound for the discretisation error which we denote by  $\delta_m^{disc}\ge\|\hat{y}-P_m^+P_m \hat{y}\|$. Decomposing the true data error yields

\begin{equation*}
\left\lVert \hat{y}-P_m^+\bar{Y}_n\m\right\rVert\le \left\lVert \hat{y}-P_m^+P_m\hat{y}\right\rVert + \left\lVert P_m^+P_m\hat{y}-P_m^+\bar{Y}_n\m \right\rVert.
\end{equation*}

 As in the approach with a finite-dimensional residuum there is a generic way (given below) to estimate the (projected) measurement error $\|P_m^+\bar{Y}_{n}\m-P_m^+P_m\hat{y}\|$ . So that it is natural to choose the number of repetitions $n=n(m,\delta_m^{disc})$ in such a way  that this estimator  approximately equals the discretisation error  $\delta_m^{disc}$. After that one may use any deterministic regularisation together with total estimated noise level 
\begin{equation}\label{idr:est:disc} 
 2\delta_m^{disc} \approx \left\| \hat{y} - P_m^+P_m\hat{y}\right\| + \left\|P_m^+P_m \hat{y} - P_m^+\bar{Y}_{n(m,\delta_m^{disc})}\m\right\| \ge \left\| \hat{y} - P_m^+\bar{Y}_{n(m,\delta_m^{disc})}\m\right\|.
\end{equation} 
  We again consider regularisations $R_\alpha:=F_\alpha(K^*K)K^*$ induced by a regularising filter (see Assumption \ref{assfilt}) and make the following assumptions for the discretisation and our a priori knowledge of it.

\begin{assumption}[Discretisation for infinite-dimensional residuum]\label{disc:idr}
We assume that we know an asymptotic upper bound $(\delta_m^{disc})_{m\in\N}$ for the discretisation error and asymptotic upper and lower bounds $(c_m)_{m\in\N}, (C_m)_{m\in\N}$ for the singular values $(\sigma_j\m)_{j\le m, m\in\N}$ of  $(P_m)_{m\in\N}$. More precisely, these bounds have to fulfill $\| \hat{y}-P_m^+P_m\hat{y}\| \le \delta_m^{disc}, 0<c_m\le \sigma_j\m \le C_m$ for all $j=1,..,m$ and $m$ large enough, and $\delta_m^{disc}\to 0$ as $m\to\infty$ and
\begin{equation}\label{wellposednessdisc}
\limsup_{m\to\infty}\kappa(P_m):=\limsup_{m\to\infty}\|P_m\|\|P_m^+\| = \limsup_{m\to\infty} \frac{\max_{j=1,...,m} \sigma_j\m}{\min_{j=1,...,m} \sigma_j\m}\le \limsup_{m\to\infty} \frac{C_m}{c_m}<\infty.
\end{equation}
\end{assumption}

 Often the stability assumption \eqref{wellposednessdisc} can be guaranteed by an angle condition for the unique $\eta_j\m\in\mathcal{Y}$ that fulfill $l_j\m(y)=(\eta_j,y)$ for all $y\in\mathcal{Y}$.

 \begin{proposition}\label{idr:angle}
 Assume that 
 $$\sup_{m\in\N} \sup_{j \le m} \sum_{i\neq j} \frac{|(\eta_i\m,\eta_j\m)|}{\|\eta_1\m\|^2}\le c<1.$$
 Then $c_m:=\|\eta_1\m\|^2(1-c)\le \sigma_j\m\le\|\eta_1\m\|^2(1+c)=:C_m$ for $j=1,..,m$ and $m$ large enough and thus $\kappa(P_m) \le \frac{1+c}{1-c}$. 
 \end{proposition}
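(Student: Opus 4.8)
The plan is to reduce the statement to a localisation of the spectrum of a finite, symmetric matrix. Since $P_m$ maps into $\R^m$, its singular values are governed by the $m\times m$ Gram matrix $P_mP_m^*$, and I would control the spectrum of this matrix by Gershgorin's circle theorem, the off-diagonal mass of which is exactly what the hypothesis bounds.

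First I would make the Gram structure explicit. From the Riesz representation underlying \eqref{discmod} one computes $P_m^*v=\sum_{j=1}^m v_j\,\eta_j\m$ for $v\in\R^m$, whence $P_mP_m^*$ is precisely the Gram matrix $G\m:=\big((\eta_i\m,\eta_j\m)\big)_{i,j\le m}$, whose eigenvalues are the squared singular values $(\sigma_j\m)^2$. The standing normalisation $\|l_1\m\|=\dots=\|l_m\m\|$ together with Riesz gives $\|\eta_j\m\|=\|\eta_1\m\|$ for every $j\le m$, so all diagonal entries of $G\m$ equal $\|\eta_1\m\|^2$. Applying Gershgorin to the symmetric matrix $G\m$, each eigenvalue $\lambda$ lies in a disc centred at $\|\eta_1\m\|^2$ of radius $\sum_{i\neq j}|(\eta_i\m,\eta_j\m)|$ for some row $j$; the assumed uniform bound forces this radius to be at most $c\,\|\eta_1\m\|^2$, so $|\lambda-\|\eta_1\m\|^2|\le c\,\|\eta_1\m\|^2$. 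As $G\m$ is symmetric its eigenvalues are real, hence $\|\eta_1\m\|^2(1-c)\le\lambda\le\|\eta_1\m\|^2(1+c)$ for every eigenvalue. This localises $\sigma_j\m$ between $c_m$ and $C_m$; moreover the strict lower bound $\|\eta_1\m\|^2(1-c)>0$ certifies that $G\m=P_mP_m^*$ is positive definite and hence invertible, consistent with the standing assumption that $P_mP_m^*$ is bijective. The condition number estimate then reads off from $\kappa(P_m)=\max_j\sigma_j\m/\min_j\sigma_j\m\le C_m/c_m=(1+c)/(1-c)$, yielding \eqref{wellposednessdisc}.

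Because $c<1$ is assumed uniformly in $m$, the Gershgorin radii are controlled uniformly in both $j$ and $m$, so every estimate holds for all $m$ (in particular for large $m$), which is exactly what Assumption \ref{disc:idr} requires. There is no deep obstacle here: the only point that needs care is the bookkeeping between the eigenvalues of $P_mP_m^*$ and the singular values $\sigma_j\m$ — the Gershgorin interval bounds the squared singular values, and one then deduces the stated bounds on $\sigma_j\m$ and on $\kappa(P_m)$ accordingly — together with verifying that the hypothesis supplies the off-diagonal row-sum bound uniformly, so that a single constant $c$ (and hence $c_m,C_m$ of the same form for all $m$) can be used throughout.
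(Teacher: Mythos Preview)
Your proposal is correct and follows essentially the same approach as the paper. Both arguments compute the Gram matrix $G\m=P_mP_m^*$ with constant diagonal $\|\eta_1\m\|^2$ and bound its spectrum by the off-diagonal row sums; you invoke Gershgorin directly, while the paper uses the equivalent estimate $\|G\m/\|\eta_1\m\|^2-I\|\le\sqrt{\|\cdot\|_1\|\cdot\|_\infty}=c$ followed by a Weyl-type singular value perturbation result, arriving at the same interval. The bookkeeping point you flag---that the Gershgorin bound is on the \emph{squared} singular values $(\sigma_j\m)^2$---is present in the paper's proof as well, so you are not introducing any new gap.
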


Clearly, the angle condition is always satisfied for orthogonal discretisations. It would be desirable to also have a simple criterion to guarantee that $\delta_m^{disc}$ tends to $0$. For $P_m = \begin{pmatrix} l_1(\cdot) & ... & l_m(\cdot)\end{pmatrix}^T$ (the $l_j$ do not depend on $m$) this could be guaranteed e.g. when $(l_j)_{j\in\N}$ is complete, because then $\mathcal{N}(P_m) \supset \mathcal{N}(P_{m+1}) \supset ...$ converges monotonically to $0$ and $P_m^+P_m = P_{\mathcal{N}(P_m)^\perp}$ is the orthogonal projection onto the orthogonal complement of $\mathcal{N}(P_m)$. A straight forward generalisation of completeness to discretisation schemes $P_m = \begin{pmatrix} l_1\m(\cdot) & ... & l_m\m(\cdot)\end{pmatrix}^T$ would be to presume, that for all $y\in\mathcal{Y}\setminus\{0\}$ there exists a $\varepsilon>0$ such that $\|P_m y\| \ge \varepsilon$ for $m$ large enough. The following counter example however shows that this is not sufficient to guarantee that the discretisation error tends to $0$.

\begin{remark}
Let $(v_j)_{j\in\N}$ be an orthonormal basis of $\mathcal{Y}$. Set $l_j\m(y)= (y,v_j)$ for $j=2,...,m$ and $l_1\m(y)= ( y, v_1/\sqrt{2} + v_{m+1}/\sqrt{2})$. For $y\neq 0$ we set $\varepsilon=|(y,v_j)|/2$ with $j=\min\{ j'~: (y,v_j') \neq 0\}$. Then clearly $\|P_m y \| \ge \varepsilon$ for $m$ large enough. But, it holds that $\mathcal{N}(P_m)=< v_1/\sqrt{2}-v_{m+1}/\sqrt{2},v_{m+2},v_{m+3},...,>$ and thus $$v_1-P_m^+P_mv_1=P_{\mathcal{N}(P_m)} v_1 = v_1/\sqrt{2} \not\to 0$$ for $m\to\infty$.
\end{remark}

 We now show that Assumption \ref{disc:idr} is fulfilled for various popular discretisation schemes. We start with the example from the introduction.

\begin{proposition}\label{idr:intr}
Assume that $l_j\m = l_j=(\eta_j,\cdot)$ for all $j=1,...,m$ and $m\in\N$ with $(l_j)_{j\in\N} \subset \mathcal{L}(\mathcal{Y},\R)$ and $(\eta_j)_{j\in\N} \subset \mathcal{Y}$ and that we know $c$ and $\delta_m^{disc}$ such that $\delta_m^{disc}\ge \|\hat{y}-P_m^+P_m\hat{y}\|$ and $(l_j)_{j\in\N}$ is complete, i.e. for all $y\in \mathcal{Y} \setminus \{0\}$ there exists a $l_j$ such that $l_j(y)\neq 0$, and well-conditioned that is  $$ \sup_{j\in\N}\sum_{\substack{i =1 \\ i\neq j}}^\infty |(\eta_i,\eta_j)|/\|\eta_1\|^2\le c < 1.$$ Then Assumption \ref{disc:idr} is fulfilled for $\delta_m^{disc}$ and $c_m = 1-c, C_m=1+c$.
\end{proposition}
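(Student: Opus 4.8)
The plan is to verify directly the two ingredients required by Assumption \ref{disc:idr}: the uniform two-sided bounds on the singular values (together with the bounded condition number \eqref{wellposednessdisc}), and the convergence of the discretisation error $\|\hat y-P_m^+P_m\hat y\|$ to $0$. The upper bound $\delta_m^{disc}$ is supplied by hypothesis, so the substance of the proposition is that the well-conditioning assumption controls the spectrum of $P_m$ uniformly in $m$, while completeness forces the discretisation error to vanish. I would treat these two points separately.

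For the singular values I would observe that, since $l_j^{(m)}=l_j=(\eta_j,\cdot)$ does not depend on $m$, the hypothesis $\sup_{j}\sum_{i\neq j}|(\eta_i,\eta_j)|/\|\eta_1\|^2\le c<1$ is exactly the angle condition of Proposition \ref{idr:angle}, with the outer supremum over $m$ vacuous. Applying Proposition \ref{idr:angle} yields bounds $c_m\le\sigma_j^{(m)}\le C_m$ that are constant in $m$; after normalising $\|\eta_1\|=1$ these are precisely $c_m=1-c$ and $C_m=1+c$. Concretely this is Gershgorin's theorem applied to the Gram matrix $P_mP_m^*=\bigl((\eta_i,\eta_j)\bigr)_{i,j\le m}$, whose diagonal entries equal $\|\eta_1\|^2$ and whose absolute off-diagonal row sums are bounded by $c\|\eta_1\|^2$. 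Because the bounds are independent of $m$, $\limsup_{m\to\infty}\kappa(P_m)\le(1+c)/(1-c)<\infty$, so \eqref{wellposednessdisc} holds, and $c_m=1-c>0$ guarantees positivity of the lower bound, hence bijectivity of $P_mP_m^*$.

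For the discretisation error I would use that the $m$-independence of the $\eta_j$ makes the null spaces nest: $\mathcal{N}(P_m)=\{\eta_1,\dots,\eta_m\}^{\perp}$ satisfies $\mathcal{N}(P_1)\supset\mathcal{N}(P_2)\supset\cdots$, and $P_m^+P_m$ is the orthogonal projection onto $\mathcal{N}(P_m)^{\perp}$, so that $\hat y-P_m^+P_m\hat y=P_{\mathcal{N}(P_m)}\hat y$. Completeness of $(l_j)$ means precisely that $\bigcap_{m}\mathcal{N}(P_m)=\{y:(\eta_j,y)=0\ \forall j\}=\{0\}$. By the standard strong convergence of orthogonal projections onto a decreasing chain of closed subspaces towards the projection onto their intersection, $P_{\mathcal{N}(P_m)}\hat y\to 0$; hence $\|\hat y-P_m^+P_m\hat y\|\to 0$, which is consistent with, and justifies the existence of, a known upper bound $\delta_m^{disc}\to 0$. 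Together with the previous paragraph this establishes Assumption \ref{disc:idr}.

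The only genuinely non-formal step is this last convergence, and it is exactly where completeness enters. I would stress that it hinges on the $\eta_j$ not depending on $m$, which is what makes the null spaces nest and lets the monotone convergence of projections apply; the counterexample in the remark preceding the proposition shows that a naive completeness condition for $m$-dependent discretisations does not suffice. The remaining care is purely bookkeeping: matching the paper's convention that $\sigma_j^{(m)}$ refers to the spectrum of $P_mP_m^*$ and carrying the normalisation $\|\eta_1\|=1$ so that the constants come out as $1\mp c$.
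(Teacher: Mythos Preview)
Your proof is correct and follows essentially the same route as the paper: the singular-value bounds are obtained by invoking Proposition~\ref{idr:angle} (the angle/Gershgorin argument on the Gram matrix), and the convergence $\|\hat y-P_m^+P_m\hat y\|\to 0$ is deduced from the nesting $\mathcal{N}(P_1)\supset\mathcal{N}(P_2)\supset\cdots$ together with completeness forcing $\bigcap_m\mathcal{N}(P_m)=\{0\}$. The paper makes the last step explicit by choosing an orthonormal basis $(w_i)$ adapted to the chain so that $\mathcal{N}(P_m)=\overline{\mathrm{span}}(w_{m+1},w_{m+2},\dots)$ and writing the discretisation error as a tail sum, whereas you appeal directly to the strong convergence of projections onto a decreasing chain; these are equivalent formulations of the same fact.
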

Next we consider discretisation along the singular directions of $K$, see the beginning of Section \ref{sec:proofs} for the definition of the singular value decomposition. 
\begin{proposition}\label{idr:svd}
Assume that the singular value decomposition $(\sigma_l,v_l,u_l)_{l\in\N}$ of $K$ is known. Then for the discretisation $l_j\m =  (u_j, \cdot)$ Assumption \ref{disc:idr} is (asymptotically) fulfilled with the bounds $\delta_m^{disc} = f_m \sigma_{m+1}$ (where $f_m$ is any sequence with $f_m \to \infty$ as $m\to\infty$) and $c_m=C_m=1$.
\end{proposition}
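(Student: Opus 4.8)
The plan is to verify each requirement of Assumption \ref{disc:idr} separately, exploiting that here the measurement functionals project onto the \emph{orthonormal} left singular system $(u_j)_{j\in\N}$ of $K$. First I would record that orthonormality gives $\lk P_mP_m^*\rk_{jk}=(u_j,u_k)=\delta_{jk}$, i.e. $P_mP_m^*=\mathrm{Id}_{\R^m}$. Hence every singular value of $P_m$ equals $1$, so $\sigma_j\m=1$ for all $j\le m$, which immediately yields $c_m=C_m=1$ and $\kappa(P_m)=1$. This settles the singular-value bounds and the conditioning requirement \eqref{wellposednessdisc}, the latter trivially since $\limsup_{m\to\infty}C_m/c_m=1<\infty$.

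Next I would identify the discretisation error. From $P_mP_m^*=\mathrm{Id}$ one gets the right-inverse formula $P_m^+=P_m^*(P_mP_m^*)^{-1}=P_m^*$, and since $P_m^*c=\sum_{j=1}^m c_j u_j$, the composition $P_m^+P_m$ is the orthogonal projection onto $\mathrm{span}(u_1,\dots,u_m)$. Assuming $\hat y\in\overline{\mathrm{range}(K)}$ (which holds since $K$ has dense range, so that $(u_j)_{j\in\N}$ is a complete orthonormal system of $\mathcal{Y}$), Parseval's identity gives
\begin{equation*}
\left\|\hat y - P_m^+P_m\hat y\right\|^2 = \sum_{j=m+1}^\infty (u_j,\hat y)^2 .
\end{equation*}

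The key step is then to convert this tail into a bound involving $\sigma_{m+1}$. Using the spectral representation $K^+\hat y=\sum_j \sigma_j^{-1}(u_j,\hat y)v_j$, the SVD relation $(u_j,\hat y)=\sigma_j (v_j,K^+\hat y)$ together with the monotonicity $\sigma_j\le\sigma_{m+1}$ for $j\ge m+1$ yields
\begin{equation*}
\sum_{j=m+1}^\infty (u_j,\hat y)^2=\sum_{j=m+1}^\infty \sigma_j^2 (v_j,K^+\hat y)^2\le \sigma_{m+1}^2\left\|K^+\hat y\right\|^2,
\end{equation*}
hence $\|\hat y-P_m^+P_m\hat y\|\le \sigma_{m+1}\|K^+\hat y\|$. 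Since $f_m\to\infty$, for all $m$ larger than some threshold (depending on the unknown solution only through the finite number $\|K^+\hat y\|$) one has $f_m\ge\|K^+\hat y\|$ and therefore $\delta_m^{disc}=f_m\sigma_{m+1}\ge\|\hat y-P_m^+P_m\hat y\|$. This is exactly the \emph{asymptotic} nature of the claim, and it is precisely the divergence $f_m\to\infty$ that lets the explicit bound $\delta_m^{disc}$ dominate the error without any quantitative knowledge of $\|K^+\hat y\|$.

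Finally I would check $\delta_m^{disc}=f_m\sigma_{m+1}\to 0$. Here lies the only genuine tension of the proof: $f_m$ must diverge (to eventually beat the unknown $\|K^+\hat y\|$) yet $f_m\sigma_{m+1}$ must vanish. This is possible because $K$ is compact and thus $\sigma_{m+1}\to 0$, so it suffices to let $f_m\to\infty$ slowly enough, for instance $f_m=\sigma_{m+1}^{-1/2}$, giving $f_m\sigma_{m+1}=\sigma_{m+1}^{1/2}\to 0$. I expect this balancing of the two opposing requirements on $f_m$ to be the main (though mild) obstacle; all remaining steps are elementary consequences of the orthonormality of $(u_j)_{j\in\N}$ and the spectral form of $K^+$.
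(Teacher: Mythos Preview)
Your proof is correct and follows essentially the same route as the paper: both compute $\|\hat y-P_m^+P_m\hat y\|^2=\sum_{j>m}(\hat y,u_j)^2$, convert $(u_j,\hat y)=\sigma_j(v_j,K^+\hat y)$ and pull out $\sigma_{m+1}$, and both obtain $c_m=C_m=1$ from orthonormality (the paper phrases this last step as an application of Proposition~\ref{idr:angle} with $c=0$, while you compute $P_mP_m^*=\mathrm{Id}$ directly). You are in fact more careful than the paper in two respects: you spell out why the bound is only \emph{asymptotic} (one needs $f_m\ge\|K^+\hat y\|$), and you flag the implicit constraint that $f_m$ must grow slowly enough to keep $\delta_m^{disc}=f_m\sigma_{m+1}\to 0$, a point the paper's proof does not address.
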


In many important cases, for example if $K$ is a Fredholm integral equation with sufficient smoothing kernel, Assumption \ref{disc:idr} is also fulfilled for discretisation with box or hat functions. 

\begin{proposition}\label{idr:box}
Consider $\mathcal{X}=\mathcal{Y} = L^2(0,1)$ and $\eta_j\m$ the box functions from Proposition \ref{fdr:box}. If $\hat{y}$ is continuously differentiable, then Assumption \ref{disc:idr} is fulfilled with bounds $\delta_m = f_m/m$ and $c_m = C_m=1$ where $(f_m)_m$ is arbitrary with $\lim_m f_m = \infty$.
\end{proposition}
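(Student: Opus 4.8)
The plan is to exploit the fact that the box functions $\eta_j\m=\sqrt{m}\,\chi_{[(j-1)/m,\,j/m)}$ are orthonormal in $L^2(0,1)$, which immediately settles the conditioning part of Assumption \ref{disc:idr} and reduces the whole statement to a classical piecewise-constant approximation estimate. First I would verify orthonormality: since the supporting intervals are disjoint and each has length $1/m$, one computes $(\eta_i\m,\eta_j\m)=m\int_0^1\chi_{[(i-1)/m,\,i/m)}\chi_{[(j-1)/m,\,j/m)}=\delta_{ij}$. Hence $P_mP_m^*$ is the identity on $\R^m$, all singular values equal $1$, the bounds $c_m=C_m=1$ hold exactly, and $\kappa(P_m)=1$, so \eqref{wellposednessdisc} is trivially satisfied (equivalently, this is Proposition \ref{idr:angle} with $c=0$). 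Orthonormality also gives $P_m^+=P_m^*$, so that $P_m^+P_m$ is the orthogonal projection onto $\mathrm{span}\{\eta_1\m,\dots,\eta_m\m\}$, i.e.\ onto the space of functions that are constant on each subinterval $I_j:=[(j-1)/m,\,j/m)$; on $I_j$ its value is the local average $\bar y_j:=m\int_{I_j}\hat y(t)\,dt$.

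Next I would estimate the discretisation error. For $t\in I_j$ one has $\hat y(t)-\bar y_j=m\int_{I_j}\bigl(\hat y(t)-\hat y(s)\bigr)\,ds$, and since $\hat y\in C^1([0,1])$ the mean value theorem gives $|\hat y(t)-\bar y_j|\le \|\hat y'\|_\infty/m$. Summing the squared $L^2$ contributions yields
\begin{equation*}
\|\hat y-P_m^+P_m\hat y\|^2=\sum_{j=1}^m\int_{I_j}|\hat y(t)-\bar y_j|^2\,dt\le\sum_{j=1}^m\frac1m\cdot\frac{\|\hat y'\|_\infty^2}{m^2}=\frac{\|\hat y'\|_\infty^2}{m^2},
\end{equation*}
so that $\|\hat y-P_m^+P_m\hat y\|\le\|\hat y'\|_\infty/m$. (A sharper constant $1/(\pi m)$ follows from the one-dimensional Poincar\'e inequality on mean-zero functions, but is not needed here.)

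Finally, since $f_m\to\infty$ we have $\|\hat y'\|_\infty\le f_m$ for all sufficiently large $m$, and therefore $\|\hat y-P_m^+P_m\hat y\|\le f_m/m=\delta_m^{disc}$ eventually. Together with $c_m=C_m=1$ and $\delta_m^{disc}=f_m/m\to0$ this establishes Assumption \ref{disc:idr}. The only genuinely quantitative step is the per-interval estimate, and I expect no real obstacle there; the main point to get right is the identification of $P_m^+P_m$ with the piecewise-constant projection, which rests entirely on the orthonormality observed at the outset.
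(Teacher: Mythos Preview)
Your proof is correct and follows essentially the same approach as the paper: orthonormality of the box functions gives $c_m=C_m=1$, and the discretisation error is bounded by a piecewise-constant approximation estimate via the mean value theorem, yielding $\|\hat y-P_m^+P_m\hat y\|\le\|\hat y'\|_\infty/m$. The only cosmetic difference is that you compute the projection $P_m^+P_m\hat y$ explicitly as local averages, whereas the paper instead bounds $\|\hat y-P_m^+P_m\hat y\|\le\|\hat y-y_m\|$ with $y_m$ the left-endpoint interpolant in $\mathcal{N}(P_m)^\perp$; both routes give the same $1/m$ rate.
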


\begin{proposition}\label{idr:hat}
Consider $\mathcal{X}=\mathcal{Y} = L^2(0,1)$ and $\eta_j\m$ the hat functions from Proposition \ref{fdr:hat}. If $\hat{y}$ is continuously differentiable, then Assumption \ref{disc:idr} is fulfilled with bounds $\delta_m = f_m/m$ and $c_m = 1/6$ and $C_m=7/6$, where $\lim_m f_m = \infty$. If $\hat{y}$ is twice continuously differentiable, then Assumption \ref{disc:idr} is fulfilled with bounds $\delta_m = f_m/m^2$ and $c_m=1/6$ and $C_m=7/6$, with $\lim_m f_m = \infty$.
\end{proposition}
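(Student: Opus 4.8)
The plan is to check the two requirements of Assumption \ref{disc:idr} separately: the uniform spectral bounds $0<c_m\le\sigma_j\m\le C_m$ with $\limsup_m C_m/c_m<\infty$, and the decay $\|\hat y-P_m^+P_m\hat y\|\le\delta_m^{disc}\to0$. The first I would obtain from the angle condition of Proposition \ref{idr:angle}, the second from a classical finite element interpolation estimate.

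For the spectral bounds I would first compute the Gram structure of the $L^2$-normalised hat functions $\eta_j\m$ from Proposition \ref{fdr:hat} on the uniform grid of mesh width $h=1/(m-1)$. A direct integration gives $\|\eta_j\m\|^2=2/3$ for every $j$ and $m$, the boundary prefactor $\sqrt{2(m-1)}$ exactly compensating for the half-support of the two outermost functions. Since a hat function overlaps only its immediate neighbours, all but at most two entries in each row of the Gram matrix vanish, and evaluating the surviving integrals yields $(\eta_j\m,\eta_{j+1}\m)=1/6$ for two interior neighbours and $(\eta_1\m,\eta_2\m)=(\eta_{m-1}\m,\eta_m\m)=\sqrt2/6$ at the two ends. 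The largest normalised row sum in Proposition \ref{idr:angle} is therefore attained at $j=2$ and $j=m-1$, giving
$$\sup_m\sup_{j\le m}\sum_{i\neq j}\frac{|(\eta_i\m,\eta_j\m)|}{\|\eta_1\m\|^2}=\frac{(1+\sqrt2)/6}{2/3}=\frac{1+\sqrt2}{4}<\frac34.$$
Applying Proposition \ref{idr:angle} with $c=3/4$ then produces exactly the advertised bounds $c_m=\frac23(1-\frac34)=\frac16$ and $C_m=\frac23(1+\frac34)=\frac76$, whence $\limsup_m\kappa(P_m)\le C_m/c_m=7<\infty$.

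For the discretisation error I would use that $P_m^+P_m$ is the orthogonal projection onto $\mathcal N(P_m)^\perp=\mathrm{span}\{\eta_1\m,\dots,\eta_m\m\}$, i.e.\ onto the space $V_m$ of continuous functions that are piecewise linear on the grid. Hence $\|\hat y-P_m^+P_m\hat y\|$ equals the $L^2$ best approximation error of $\hat y$ in $V_m$, and is bounded by $\|\hat y-I_m\hat y\|_{L^2}$ for the nodal interpolant $I_m\hat y\in V_m$. The standard elementwise interpolation estimate gives $\|\hat y-I_m\hat y\|_{L^\infty}=O(h)$ when $\hat y\in C^1$ and $\|\hat y-I_m\hat y\|_{L^\infty}\le\frac{h^2}{8}\|\hat y''\|_\infty=O(h^2)$ when $\hat y\in C^2$; since the domain has length one, passing to the $L^2$ norm and using $h\sim1/m$ yields $\|\hat y-P_m^+P_m\hat y\|\le C/m$ in the first case and $\le C/m^2$ in the second, with a constant $C$ depending only on $\hat y$.

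Finally I would tie the pieces together. The constant $C$ depends on the unknown exact solution and is therefore not available a priori, which is precisely why one is allowed to replace it by a sequence $f_m\to\infty$: since $f_m\to\infty$, we have $C\le f_m$ for all $m$ large enough, so $\|\hat y-P_m^+P_m\hat y\|\le f_m/m=:\delta_m^{disc}$ in the $C^1$ case and $\le f_m/m^2=:\delta_m^{disc}$ in the $C^2$ case, while $\delta_m^{disc}\to0$ for the admissible choices of $f_m$. Together with the spectral bounds this verifies Assumption \ref{disc:idr}. I expect the only genuinely delicate point to be the uniform verification of the angle condition: the enlarged boundary inner products $\sqrt2/6$ push the worst row sum up to $(1+\sqrt2)/4\approx0.60$, and one must confirm that this stays strictly below the value $c=3/4$ that reproduces the stated constants $c_m=1/6$ and $C_m=7/6$; the remaining computations are routine finite element bookkeeping.
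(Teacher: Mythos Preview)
Your proof is correct and mirrors the paper's approach exactly: Proposition \ref{idr:angle} for the spectral bounds via the Gram row sums, and the piecewise-linear nodal interpolant to control the discretisation error. Your boundary inner product $(\eta_1\m,\eta_2\m)=\sqrt{2}/6$ is in fact sharper than the value $1/3$ recorded in the paper, but since both lie below the $c=3/4$ needed to reproduce $c_m=1/6$ and $C_m=7/6$, the argument is unaffected.
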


It remains to determine the number of repetitions $n(m,\delta_m^{disc})$ such that the (back projected) measurement error fulfills $\|P_m^+P_m\hat{y} - P_m^+\bar{Y}_{n(m,\delta_m^{disc})}\m\| \approx \delta_m^{disc}$. This number depends on the singular value decomposition of $P_m$ and the variance $\E {\delta_{11}\m}^2$. More precisely, with $(\sigma_j\m,v_j\m,u_j\m)_{j\le m}$ the singular value decomposition of $P_m$  and $e_1\m,...,e_m\m$ the standard basis of $\mathbb{R}^m$, it holds that

\begin{align*}
& \lVert P_m^+\bar{Y}_n\m - P_m^+P_m\hat{y}\rVert^2 = \sum_{j=1}^m \frac{1}{{\sigma_j\m}^2} \left(\sum_{l=1}^m \sum_{i=1}^n \frac{\delta_{ij}\m}{n} (u_j\m,e_l\m)\right)^2\\
\Longrightarrow& \E \|P_m^+\bar{Y}_n\m-P_m^+P_m\hat{y}\|^2  = \frac{\E{\delta_{11}\m}^2}{n}\sum_{j=1}^m\frac{1}{{\sigma_j\m}^2}.
\end{align*}

Thus with our lower bound $c_m\le \sigma_j\m$ we determine 

\begin{equation*}
n(m,\delta_m^{disc}) := \min\left\{ n \ge 2~:~ \frac{ms_{m,n}^2}{nc_m^2}\le {\delta_m^{disc}}^2 \right\},
\end{equation*}
with $s_{m,n}^2 = 1$ or $s_{m,n}^2 = \frac{1}{m}\sum_{j=1}^m \frac{1}{n-1}\sum_{i=1}^n\left(Y_{ij}\m-\frac{1}{n}\sum_{l=1}^n Y_{lj}\m\right)^2$.

\subsection{A priori regularisation with infinite-dimensional residuum}

For  a priori regularisations we set $s_{m,n}^2=1$ so that $n(m,\delta)$ and the measurements $Y_{ij}\m$ are independent. The convergence result holds true with the same assumption for the error as in Section \ref{sec:fdr:apriori}.

\begin{theorem}\label{th1b}
Assume that $K$ is injective, the discretisation fulfills Assumption \ref{disc:idr}, the error  is accordingly to Assumption \ref{err:apriori} and  $(F_\alpha)_{\alpha>0}$ fulfills Assumption \ref{assfilt}. Take an a priori parameter choice rule with $\alpha(\delta) \stackrel{\delta\to 0 }{\longrightarrow} 0$ and $\delta/\sqrt{\alpha(\delta)}\stackrel{\delta\to 0 }{\longrightarrow} 0$. Then there holds

$$\lim_{m\to\infty}\E\left\|R_{\alpha(\delta_m^{disc})}P_m^+\bar{Y}_{n(m,\delta_m^{disc})}\m-K^+\hat{y}\right\|^2 = 0$$

 for $n(m,\delta_m^{disc})=\lceil \frac{m}{c_m^2{\delta_m^{disc}}^2}\rceil$.
\end{theorem}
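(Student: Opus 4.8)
The plan is to run the classical worst-case bias--variance decomposition: the crucial feature of the a priori variant is that fixing $s_{m,n}^2=1$ makes the number of repetitions $n=n(m,\delta_m^{disc})=\lceil m/(c_m^2{\delta_m^{disc}}^2)\rceil$ deterministic, so that $Z:=P_m^+\bar{Y}_{n}\m$ depends linearly on the (mean-zero) noise and all expectations are elementary. Abbreviating $\delta:=\delta_m^{disc}$, $\alpha:=\alpha(\delta)$ and splitting off the exact-data regularised solution, I would write
$$
R_{\alpha}Z-K^+\hat{y}=R_{\alpha}(Z-\hat{y})+\lk R_{\alpha}\hat{y}-K^+\hat{y}\rk,
$$
where the second summand is deterministic. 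Squaring, taking expectations and using $\|a+b\|^2\le 2\|a\|^2+2\|b\|^2$ yields
$$
\E\left\|R_{\alpha}Z-K^+\hat{y}\right\|^2\le 2\|R_{\alpha}\|^2\,\E\|Z-\hat{y}\|^2+2\left\|R_{\alpha}\hat{y}-K^+\hat{y}\right\|^2 .
$$

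For the deterministic approximation term, I would use $K^*\hat{y}=K^*KK^+\hat{y}$ to rewrite $R_{\alpha}\hat{y}-K^+\hat{y}=\lk F_{\alpha}(K^*K)K^*K-I\rk K^+\hat{y}$. By spectral calculus on $K^*K$, condition \eqref{assfilt:1} together with the uniform bound $|1-F_\alpha(\lambda)\lambda|\le 1+C_R$, the fact that the spectral projection of $K^+\hat y$ onto $\{0\}$ vanishes, and dominated convergence give the standard filter convergence $\|R_{\alpha}\hat{y}-K^+\hat{y}\|\to 0$ as $\alpha\to0$ (cf. \cite{engl1996regularization}). Since $\delta_m^{disc}\to0$ as $m\to\infty$ and $\alpha(\delta)\to0$ as $\delta\to0$, this term vanishes in the limit $m\to\infty$.

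For the stochastic term, Assumption \ref{assfilt} gives the standard operator-norm bound $\|R_{\alpha}\|^2=\sup_{\lambda\in(0,\|K\|^2]}\lambda F_{\alpha}(\lambda)^2\le C_RC_F/\alpha$ (using $\lambda|F_\alpha(\lambda)|\le C_R$ and $|F_\alpha(\lambda)|\le C_F/\alpha$). It remains to control $\E\|Z-\hat{y}\|^2$. Writing the averaged noise vector $\bar{\delta}\m:=\tfrac1n\sum_{i=1}^n(\delta_{1}\m,\dots)$ componentwise from \eqref{errmod}, one has $Z-\hat y=P_m^+\bar\delta\m+(P_m^+P_m\hat y-\hat y)$; the first summand has mean zero and the second is deterministic, so the cross term drops and
$$
\E\|Z-\hat{y}\|^2=\E\|P_m^+\bar{\delta}\m\|^2+\left\|P_m^+P_m\hat{y}-\hat{y}\right\|^2\le\frac{mC_d}{nc_m^2}+{\delta_m^{disc}}^2\le(C_d+1){\delta_m^{disc}}^2 .
$$
Here the first inequality uses $\|P_m^+\|\le 1/c_m$ from Assumption \ref{disc:idr}, the bound $\E\|\bar\delta\m\|^2=\sum_{j=1}^m\E[(\tfrac1n\sum_i\del)^2]\le mC_d/n$ (note that $\|\bar\delta\m\|^2$ only involves squared components, so possible cross-channel correlations never appear and only the independence over the repetitions $i$ and $\sup\E{\del}^2\le C_d$ from Assumption \ref{err:apriori} are needed), and the bound $\|\hat y-P_m^+P_m\hat y\|\le\delta_m^{disc}$; the last inequality is exactly the calibration $n\ge m/(c_m^2{\delta_m^{disc}}^2)$. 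Consequently the stochastic term is bounded by a constant times ${\delta_m^{disc}}^2/\alpha(\delta_m^{disc})=\bigl(\delta_m^{disc}/\sqrt{\alpha(\delta_m^{disc})}\bigr)^2$, which tends to $0$ by the assumed parameter choice rule, so both terms vanish and the claim follows.

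I expect the genuine content to lie not in any single hard estimate but in the calibration step, which is the reason for setting $s_{m,n}^2=1$: the sample size $n$ must be deterministic (so that $Z$ is a linear, mean-preserving image of the data and the Pythagorean splitting above is legitimate) and simultaneously chosen so tightly that the amplified measurement error $\sqrt{mC_d/(nc_m^2)}$ is forced down to the order $\delta_m^{disc}$ of the unavoidable discretisation error. Any looser choice of $n$ would leave the quotient ${\delta_m^{disc}}^2/\alpha$ potentially non-vanishing and break the argument; the interplay $\delta/\sqrt{\alpha(\delta)}\to0$ from the parameter choice then absorbs exactly this balanced error.
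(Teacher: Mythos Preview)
Your proof is correct and follows essentially the same classical bias--variance decomposition as the paper's own argument; the only cosmetic difference is that the paper first uses the exact Pythagorean split (noise vs.\ deterministic bias, exploiting $\E[\bar\delta\m]=0$) and then splits the bias into discretisation and regularisation error, whereas you first peel off $R_\alpha\hat y-K^+\hat y$ and then apply the Pythagorean split inside $\E\|Z-\hat y\|^2$. The key estimates $\|R_\alpha\|^2\le C_RC_F/\alpha$, $\E\|\bar\delta\m\|^2\le mC_d/n$, $\|P_m^+\|\le 1/c_m$, and the calibration $n\ge m/(c_m^2{\delta_m^{disc}}^2)$ are identical.
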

\begin{remark}
Note that for a priori regularisation one can relax the condition on $\delta_m^{disc}$ in Assumption \ref{disc:idr} to $\lim_{m\to\infty} \delta_m^{disc}=0$ and $\limsup_{m\to\infty} \frac{\delta_m^{disc}}{\|\hat{y}-P_m^+P_m\hat{y}\|} > 0$.
\end{remark}

\subsection{A posteriori regularisation with infinite-dimensional residuum}

 We determine the stopping index $n(m,\delta_m^{disc})$ more accurately with the sample variance and set $s_{m,n}^2:= \frac{1}{m}\sum_{j=1}^m \frac{1}{n-1}\sum_{i=1}^n\left(Y_{ij}\m-\frac{1}{n}\sum_{l=1}^n Y_{lj}\m\right)^2.$ We implement the discrepancy principle in Algorithm 2.

 \begin{algorithm}[H]\label{algorithm2}
 \caption{Discrepancy principle with idr approach}
\begin{algorithmic}[1]
\STATE Choose $\tau>C_0$ (from Assumption \ref{assfilt}) and $q\in (0,1)$;
\STATE Input: Number of measurement channels $m$, measurements $Y_{ij}\m$, $j\le m, i\in\N$, upper bound $\delta_m^{disc}$ for discretisation error, lower bound $c_m$ for singular values of $P_m$;
\STATE Determine $n(m,\delta_m^{disc}):=\min \left\{n'\ge 1~:~ \frac{ms_{m,{n'}}^2}{n'c_m^2}\le {\delta_m^{disc}}^2\right\}$ from measurements $Y_{ij}\m$.
\STATE Set $\bar{Y}_{n(m,\delta_m^{disc})}\m=\frac{1}{n(m,\delta_m^{disc})}\sum_{i=1}^{n(m,\delta_m^{disc})}\begin{pmatrix} Y_{i1}\m \\ ... \\ Y_{in}\m\end{pmatrix}$;
\STATE $k=0$;
  \WHILE{$\left\| (KR_{q^k}P_m^+\bar{Y}_{n(m,\delta_m^{disc})}\m-P_m^+\bar{Y}_{n(m,\delta_m^{disc})}\m\right\| > 2\tau\delta^{disc}_m$}
  \STATE $k=k+1$;
  \ENDWHILE
  \STATE $\alpha_m=q^k$;
\end{algorithmic}
\end{algorithm}

Algorithm 2 terminates under the same conditions as Algorithm 1. The back propagating of the measurements induces correlations, which forces us to impose slightly stricter conditions on the error distribution than in the setting before. On the other hand, the regularisation is now done in $\mathcal{Y}$ (no matter which $m$), which allows to use classical results to obtain a convergence rate.

\begin{theorem}\label{th4}
Assume that $K$ is injective with dense range and that the discretisation fulfills Assumption \ref{disc:idr} and that the error  is accordingly to Assumption \ref{err:disc}, with $p\ge 2$ in the case of \ref{err:disc}.2 and $(F_\alpha)_{\alpha>0}$ fulfills Assumption \ref{assfilt} with a qualification $\nu_0>1$. For $\tau>C_0$, let $\alpha_m$ and $\bar{Y}_{n(m,\delta_m^{disc})}\m$ be the output of the discrepancy principle as implemented in Algorithm 2. Then

$$\lim_{m\to\infty}\mathbb{P}\left(\left\|R_{\alpha_m}P_m^+\bar{Y}_{n(m,\delta_m^{disc})}\m-K^+\hat{y}\right\|\ge \varepsilon\right)=0.$$

 If moreover there exist a $0<\nu\le \nu_0-1$ and a $\rho>0$ such that $K^+\hat{y}=(K^*K)^{\nu/2}w$ for some $w\in\mathcal{X}$ with $\| w \| \le \rho$, then
 
 \begin{equation*}
 \lim_{m\to\infty}\mathbb{P}\left(\left\lVert R_{\alpha_m}P_m^+\bar{Y}_{n(m,\delta_m^{disc})}\m - K^+\hat{y}\right\rVert\le L' \rho^\frac{1}{\nu+1} \left(\delta_m^{disc}\right)^\frac{\nu}{\nu+1}\right)= 1
 \end{equation*}
 
  for some constant $L'$.

\end{theorem}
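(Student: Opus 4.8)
\emph{Overview.} The plan is to exploit the defining feature of the idr construction: the regularised operator is always $K$ on the fixed space $\mathcal{Y}$, independent of $m$, and all $m$-dependence is channelled through the virtual data $Z_{n(m)}\m := P_m^+\bar{Y}_{n(m,\delta_m^{disc})}\m$ and its distance to $\hat{y}$. I therefore separate the argument into a stochastic phase that controls the effective noise level $\|Z_{n(m)}\m-\hat{y}\|$, and a deterministic phase that feeds this noise level into the classical worst-case analysis of the discrepancy principle for $K$, yielding both assertions at once.

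\emph{Stochastic phase.} First I would show that, with probability tending to one,
$$\|Z_{n(m)}\m-\hat{y}\|\le 2\delta_m^{disc}.$$
The triangle inequality splits the left-hand side into the deterministic discretisation error $\|P_m^+P_m\hat{y}-\hat{y}\|\le\delta_m^{disc}$ (Assumption \ref{disc:idr}) and the random measurement error $\|Z_{n(m)}\m-P_m^+P_m\hat{y}\|$, which I must bound by $\delta_m^{disc}$. By the identity preceding the definition of $n(m,\delta_m^{disc})$ its expected square equals $\frac{\E(\delta_{11}\m)^2}{n}\sum_{j=1}^m(\sigma_j\m)^{-2}$, which the lower singular-value bound $\sigma_j\m\ge c_m$ dominates by $\frac{m\,\E(\delta_{11}\m)^2}{n\,c_m^2}$---exactly the quantity that the stopping rule pushes below $(\delta_m^{disc})^2$ through the sample variance $s_{m,n}^2$. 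I would then invoke Lemma \ref{estlem1} to concentrate $s_{m,n}^2$ around $\E(\delta_{11}\m)^2$ and Lemma \ref{estlem2} to concentrate the measurement error around the root of its second moment, both uniformly over the relevant range of $n$; evaluating the two uniform bounds at the random index $n(m,\delta_m^{disc})$ delivers the claimed estimate.

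\emph{Deterministic phase.} On the event $\{\|Z_{n(m)}\m-\hat{y}\|\le 2\delta_m^{disc}\}$ I am exactly in the deterministic discrepancy-principle setting for $K$ with data $Z_{n(m)}\m$, noise level $\tilde{\delta}:=2\delta_m^{disc}$ and safety factor $\tau>C_0$, since Algorithm 2 returns the first $\alpha_m=q^k$ with $\|KR_{\alpha_m}Z_{n(m)}\m-Z_{n(m)}\m\|\le\tau\tilde{\delta}$. The loop terminates because $K$ has dense range, so the residual tends to $0$ as $\alpha\to0$ by Assumption \ref{assfilt}. The classical convergence theorem for the discrepancy principle (e.g.\ \cite{engl1996regularization}) then gives $R_{\alpha_m}Z_{n(m)}\m\to K^+\hat{y}$ as $\tilde{\delta}\to0$; since $\tilde{\delta}=2\delta_m^{disc}\to0$ and the good event has probability tending to one, the first assertion follows. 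Under the source condition $K^+\hat{y}=(K^*K)^{\nu/2}w$ with $0<\nu\le\nu_0-1$, the corresponding classical rate estimate yields $\|R_{\alpha_m}Z_{n(m)}\m-K^+\hat{y}\|\le C\rho^{1/(\nu+1)}\tilde{\delta}^{\nu/(\nu+1)}$ on the good event; substituting $\tilde{\delta}=2\delta_m^{disc}$ and absorbing $2^{\nu/(\nu+1)}$ into $L'$ gives the stated rate with probability tending to one, the restriction $\nu\le\nu_0-1$ reflecting the one order of qualification consumed by the discrepancy principle.

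\emph{Main obstacle.} The entire difficulty sits in the stochastic phase, and precisely in the randomness of the stopping index $n(m,\delta_m^{disc})$: it is a function of the sample variances $s_{m,n}^2$ and is therefore correlated with the average $\bar{Y}_{n(m)}\m$ built from the same samples $Y_{ij}\m$, so the measurement error may not be evaluated at a deterministic index. The remedy is to control both $s_{m,n}^2$ and the back-projected noise uniformly over all candidate $n$ simultaneously, so that the bounds persist at the data-dependent $n(m,\delta_m^{disc})$; supplying this uniform control is exactly the purpose of Lemma \ref{estlem1} and Lemma \ref{estlem2}, and it is here that the strengthened moment hypothesis ($p\ge2$ in Assumption \ref{err:disc}.2), needed to tame fourth-order fluctuations as the dimension $m$ grows, becomes indispensable.
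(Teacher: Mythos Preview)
Your approach is essentially the paper's: define a high-probability event on which the virtual-data error $\|Z_{n(m)}\m-\hat y\|$ is bounded by a fixed multiple of $\delta_m^{disc}$, and on that event quote the classical deterministic discrepancy-principle theory (Theorem~4.17 and Remark~4.18 in \cite{engl1996regularization}) for both convergence and the order-optimal rate under the source condition.

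Two small repairs are needed. First, your bound $\|Z_{n(m)}\m-P_m^+P_m\hat y\|\le\delta_m^{disc}$ is too tight in general: Lemma~\ref{estlem2} concentrates the back-projected measurement error around $\delta_m=\delta_m^{disc}\sqrt{\sum_j c_m^2/(m{\sigma_j\m}^2)}\le\delta_m^{disc}$, and when $c_m$ is a sharp lower bound (e.g.\ the orthogonal discretisations of Propositions~\ref{idr:svd} and \ref{idr:box}, where $c_m=\sigma_j\m$ for all $j$) one has $\delta_m=\delta_m^{disc}$, so the error exceeds $\delta_m^{disc}$ with non-vanishing probability. The paper therefore allows the measurement error up to $\frac{\tau+C_0}{2C_0}\delta_m^{disc}>\delta_m^{disc}$, giving a total noise level $\frac{\tau+3C_0}{2C_0}\delta_m^{disc}$; the threshold $2\tau\delta_m^{disc}$ in Algorithm~2 then corresponds to an effective safety factor $\frac{4\tau C_0}{\tau+3C_0}>C_0$, which is what the deterministic theory requires. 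Any constant strictly above $1$ in place of your $1$ would do, provided you verify this last inequality. Second, Lemma~\ref{estlem1} is the fdr concentration result for $\|\bar Y_n\m-P_m\hat y\|$, not for the sample variance; the statement you mean is Proposition~\ref{sec:5prop1}. In fact Lemma~\ref{estlem2} already absorbs both the sample-variance concentration and the randomness of $n(m,\delta_m^{disc})$ internally, so a single appeal to it gives $\mathbb{P}(\Omega_m)\to1$ directly.
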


Now Corollary \ref{cor2} in the introduction is an easy consequence of Theorem \ref{th4} and Proposition \ref{idr:intr}.

\section{Discussion}\label{sec:discussion}

In this section we discuss the above results in more detail in the light of classical results for statistical inverse problems. Classical results are usually formulated for white noise with intensity $\sigma^2$. With averaging multiple measurements we control the size of the white noise, in fact it holds that $\sigma^2 \asymp 1/n$. Many convergence results require that the size of the discretised measurements is constant for $m$, i.e. that $\sigma^2 = 1/m$. Thus, the classical results hold for $n=m$ and the condition $m/n\to0$ for the convergence results in Section \ref{sec:fdr} seems very strong. 
 Our first result shows that in our general setting this condition is necessary to ensure convergence for a priori regularisation methods. Hereby, note that in the classical deterministic theory a general ill-posed linear problem $K\hat{x}=\hat{y}$ with $K$ some ill-posed linear operator and $(y^\delta)_{\delta>0}$ a sequence of (deterministic) measurements fulfilling $\|\hat{y}-y^\delta\|\le \delta$ can be solved using any filter based regularisation fulfilling Assumption \ref{assfilt} together with a proper a priori choice rule $\alpha=\alpha(\delta)$ (e.g. $\alpha(\delta)=\delta$). In particular, such a choice depends only on the noise level $\delta$ and as such is independent of $K$. Now we assume that in our statistical setting the number of measurement channels equals the number of repetitions, i.e. $m=n$. Further assume that $\alpha=\alpha_m$ is any possible a priori parameter choice rule that converges monotonically to $0$ as $m\to\infty$.
 
 \begin{proposition}\label{discussion:prop}
 There exist a compact operator $K:l^2(\N)\to l^2(\N)$, an element $\hat{y}\in l^2(\N)$, a discretisation scheme $P_m:l^2(\N)\to \R^m$ and an error model $(\delta_{ij})_{ij\in\N}$ such that

$$\E\|R_{\alpha_m}\m \bar{Y}_m\m - K^+\hat{y}\|^2 \to \infty$$

as $m\to\infty$, where $\bar{Y}_n\m = \sum_{i=1}^m\begin{pmatrix} Y_{i1} & ... & Y_{im}\end{pmatrix}^T/m$ with $Y_{ij} = (P_m\hat{y})_j + \delta_{ij}$ and $(R_\alpha\m)_{\alpha>0}$ is the Tikhonov regularisation.
 
 \end{proposition}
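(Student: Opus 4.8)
The plan is to realise the counterexample in the simplest diagonal geometry, where every quantity is available in closed form, and to exploit the fact that with $m=n$ the averaged noise keeps variance $\tau^2/m$ on each of the $m$ channels, so that the total noise energy $\E\|\bar Y_m\m-P_m\hat y\|^2=\tau^2$ does \emph{not} decay. Concretely, I would take $\mathcal X=\mathcal Y=\ell^2(\N)$, let $K$ be the diagonal operator $Ke_j=\sigma_j e_j$ for a null sequence $(\sigma_j)$ to be fixed last (so $K$ is compact and, since each $\sigma_j>0$, injective), let $P_m$ be the coordinate projection $P_my=(y_1,\dots,y_m)^T$ (so the channels $\eta_j\m=e_j$ have equal norm and $P_mP_m^*=I_m$ is bijective), and let $(\delta_{ij})$ be i.i.d.\ with mean zero and fixed variance $\tau^2>0$. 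Fixing any $\hat x\in\ell^2(\N)$ and setting $\hat y=K\hat x$ gives $K^+\hat y=\hat x$.

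Second, I would compute the mean squared error explicitly. Since everything is simultaneously diagonalised, Tikhonov gives $(R_{\alpha_m}\m\bar Y_m\m)_j=\tfrac{\sigma_j}{\sigma_j^2+\alpha_m}\bar Y_{m,j}$ for $j\le m$ (and $0$ otherwise), with $\bar Y_{m,j}=\sigma_j\hat x_j+\bar\delta_j$ and $\E\bar\delta_j^2=\tau^2/m$. Splitting off the usual bias and variance (the cross term vanishes as $\E\bar\delta_j=0$) yields
\begin{equation*}
\E\bigl\|R_{\alpha_m}\m\bar Y_m\m-\hat x\bigr\|^2
=\underbrace{\sum_{j\le m}\frac{\alpha_m^2}{(\sigma_j^2+\alpha_m)^2}\hat x_j^2+\sum_{j>m}\hat x_j^2}_{\le\|\hat x\|^2}
+\frac{\tau^2}{m}\sum_{j\le m}\frac{\sigma_j^2}{(\sigma_j^2+\alpha_m)^2}.
\end{equation*}
Because the bias is bounded by $\|\hat x\|^2$ and hence cannot diverge, the whole problem reduces to forcing the variance term $V_m:=\tfrac{\tau^2}{m}\sum_{j\le m}\tfrac{\sigma_j^2}{(\sigma_j^2+\alpha_m)^2}\to\infty$.

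Third, I would design $(\sigma_j)$ adaptively to the prescribed sequence $(\alpha_m)$ using geometric blocks: for $k\ge1$ set $\sigma_j^2:=\alpha_{2^k}$ for all $j$ in $B_k:=\{2^{k-1},\dots,2^k-1\}$, a block of length $2^{k-1}$. Monotonicity of $(\alpha_m)$ makes $\sigma_j^2$ non-increasing with $\sigma_j^2\to0$, so $K$ is compact. For a stage $m\in B_K$ the preceding block $B_{K-1}$ is fully contained in $\{1,\dots,m\}$, has length $2^{K-2}\ge m/4$, and carries the value $\sigma_j^2=\alpha_{2^{K-1}}\ge\alpha_m$ (as $m\ge2^{K-1}$). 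Using the elementary bound $\sigma^2/(\sigma^2+\alpha)^2\ge 1/(4\sigma^2)$ whenever $\alpha\le\sigma^2$, each term of $B_{K-1}$ is at least $1/(4\alpha_{2^{K-1}})$, so keeping only this block gives $V_m\ge\tfrac{\tau^2}{m}\cdot\tfrac{m}{4}\cdot\tfrac{1}{4\alpha_{2^{K-1}}}=\tfrac{\tau^2}{16\,\alpha_{2^{K-1}}}$. As $m\to\infty$ we have $K\to\infty$ and $\alpha_{2^{K-1}}\to0$, whence $V_m\to\infty$ and the claim follows.

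The main obstacle, and the reason the block construction is needed, is that $(\alpha_m)$ is fixed first and is an \emph{arbitrary} monotone null sequence, whereas the spectrum $(\sigma_j)$ must be chosen once and for all: a naive matching such as $\sigma_j^2\approx\alpha_j$ only blows up along a subsequence and fails entirely for fast-decaying $\alpha_m$. The geometric blocks circumvent this by guaranteeing that for \emph{every} large $m$ there is a completely contained block whose singular values dominate $\alpha_m$ and whose length is a fixed fraction of $m$, so that the averaged noise variance $\tau^2/m$ is beaten by the $\sim1/\alpha_{2^{K-1}}$ noise amplification acting on a macroscopic number of channels. The remaining verifications, namely the scalar inequality above and the block-length bookkeeping, are routine.
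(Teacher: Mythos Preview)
Your proof is correct and follows the same overall architecture as the paper's: a diagonal $K$ on $\ell^2$, coordinate projection $P_m$, i.i.d.\ errors, and the bias--variance split leaving only the variance
\[
V_m=\frac{\tau^2}{m}\sum_{j\le m}\frac{\sigma_j^2}{(\sigma_j^2+\alpha_m)^2}
\]
to force to infinity. The paper in fact takes $\hat x=\hat y=0$, so the bias vanishes identically, and chooses $\sigma_j=\alpha_j^{1/4}$ once and for all.

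The only substantive difference is your choice of spectrum: you build dyadic blocks $\sigma_j^2=\alpha_{2^k}$ for $j\in\{2^{k-1},\dots,2^k-1\}$ to guarantee, for every $m$, a block of size $\asymp m$ on which $\sigma_j^2\ge\alpha_m$. This is correct but more elaborate than necessary. The ``obstacle'' you describe---that naive matching $\sigma_j^2\approx\alpha_j$ fails for fast-decaying $\alpha_m$---is not real: with $\sigma_j^2=\alpha_j$ (or the paper's $\sigma_j^2=\sqrt{\alpha_j}$), monotonicity gives $\alpha_m\le\alpha_j=\sigma_j^2$ for \emph{every} $j\le m$, so your own scalar bound $\sigma^2/(\sigma^2+\alpha)^2\ge 1/(4\sigma^2)$ already yields
\[
V_m\ \ge\ \frac{\tau^2}{m}\sum_{j=\lceil m/2\rceil}^{m}\frac{1}{4\alpha_j}\ \ge\ \frac{\tau^2}{8\,\alpha_{\lceil m/2\rceil}}\ \longrightarrow\ \infty,
\]
for every monotone null sequence $(\alpha_m)$, not merely along a subsequence. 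So the paper's direct choice does the job; the block construction buys nothing extra here.
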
 

Note that in the above proposition it was important that we fixed the a priori choice rule before the ill-posed problem given through $K$. If we restrict to certain classes, e.g. to mildly ill-posed problems (i.e. the singular values of $K$ fulfill $\sigma_j^2 \asymp j^{-q}$ for some $q>0$), one can give a priori parameter choice rules which converge for $m=n$. 




We now compare our results in detail to recent results for the discrepancy principle and ultimately show that here the condition $m/n\to 0$ is necessary even if one restricts to mildly ill-posed problems. In \cite{blanchard2018early} and \cite{blanchard2018optimal} order-optimal $L^2$-rates are given for the discrepancy principle under Gaussian noise and sufficiently unsmooth data. In these articles the implementation of the discrepancy principle differs from ours (see Algorithm 1) as there in essence the hyperparameter $\tau_m$ depends on the number of measurement channels $m$, whereas we choose a constant $\tau>1$ as in the classical deterministic theory. Precisely, there the regularisation parameter is essentially determined as

$$\inf_{\alpha>0} \|P_m K R_{\alpha}\m \bar{Y}_n\m-\bar{Y}_n\m\| \le \kappa_{m,n}$$

with $|\kappa_{m,n}^2 -\frac{m}{n}\| = \mathcal{O}\left(\frac{\sqrt{m}}{n}\right)$ (note that $\frac{m}{n} = \E[ {\delta_{m,n}^{est}}^2] = \E\|\bar{Y}_n\m-P_m\hat{y}\|^2$ is the (expected) squared noise if $\E[\delta_{ij}^2]=1$). Apart from the fact that we consider more general discretisation schemes, the main difference to the aforementioned results is that we allow for general unknown error distributions. If one instead of $L^2$-convergence asks only for convergence in probability it seems to us that the assumption of Gaussian noise in \cite{blanchard2018early} and \cite{blanchard2018optimal} could be relaxed to arbitrary distributions obeying a finite fourth moment. In fact, under that relaxed assumption one can show that the oscillation of the residual is of a comparably small order and then the choice $\kappa_{m,n}$, due to the correct leading order, can capture the smoothness of $\hat{x}$ more accurately  (and exactly up to saturation) than the plain discrepancy principle (i.e. the choice $\tau>1$) would do and thus gives better convergence rates. However, this procedure seems not to be stable if higher moments do not exist. From the following result one can directly deduce that then the analysis in \cite{blanchard2018early} and \cite{blanchard2018optimal} breaks down. In particular it  shows using a counter example that for both choices the discrepancy principle does not converge in any commonly used mode , if $n/m$ does not converge to $0$.

\begin{theorem}\label{discussion:prop1}
Let $K:l^2(\N)\to l^2(\N)$ be diagonal with singular values $\sigma_j^2 = j^{-q}$ with $q>1$ and singular basis $(v_j)_{j\in\N}$. Let $P_m$ be the discretisation along the singular values, i.e. $(P_m v_j)_l = (v_j,v_l)$ for $j\in\N$ and $l=1,...,m$. Let $Y_{ij} = (P_m \hat{y})_j + \delta_{ij}$ be i.i.d. measurements of $\hat{y}\in l^2(\N)$ (which will be specified below), where $\delta_{ij}$ has density $f_\varepsilon$ (with $\varepsilon<2/11$ as given in Proposition \ref{discproof:prop1} below) and consider $(R_\alpha)_{\alpha>0}$ the spectral cut-off regularisation. Let $c>0$. If $\hat{y} = \sum_{j=1}^\infty j^{-\frac{q}{2}-1} v_j$ and the regularisation parameter $\alpha_{m,n}$ is determined with Algorithm 1 and $\tau>1$, then there exists $C>0$ such that

$$\lim_{\substack{m,n\to\infty\\ m/n \ge c}}\mathbb{P}\left(\| R_{\alpha_{m,n}}\m \bar{Y}_n\m- K^+\hat{y}\| \ge C\right) = 1.$$

If $\hat{y}=0$ and the regularisation parameter $\alpha_{m,n}$ is determined with Algorithm 1, where in line 6 the right hand side $\tau \delta_{m,n}^{est}$ is replaced with $\sqrt{\frac{m+\sqrt{m}}{n}}$, then there exists $p_\varepsilon>0$ such that for any $L>0$ it holds that

$$\liminf_{\substack{m,n\to\infty\\m/n\ge c}}\mathbb{P}\left(\|R_{\alpha_{m,n}}\m\bar{Y}_n\m - K^+\hat{y}\|\ge L\right) >p_\varepsilon.$$

\end{theorem}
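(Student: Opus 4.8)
\emph{Plan.} The idea is to exploit the fully explicit diagonal structure and to isolate, in each of the two regimes, the single reason the discrepancy principle fails. Since $K$ is diagonal with $\sigma_j^2=j^{-q}$ and $P_m$ truncates to the first $m$ singular coordinates, spectral cut-off keeps exactly the coordinates with $\sigma_j^2\ge\alpha$, i.e. the first $N=N(\alpha)$ coordinates (the geometric grid of Algorithm 1 only changes $N$ by a bounded factor). Writing $\bar\delta_j:=\frac1n\sum_{i=1}^n\delta_{ij}$, the reconstruction is $\sum_{j\le N}\sigma_j^{-1}(\hat{y}_j+\bar\delta_j)v_j$, the residual in the stopping rule is the tail $\sum_{j=N+1}^m(\hat{y}_j+\bar\delta_j)^2$, and the error splits as
$$\left\|R_{\alpha}\m\bar{Y}_n\m-K^+\hat{y}\right\|^2=\sum_{j\le N}\frac{\bar\delta_j^2}{\sigma_j^2}+\sum_{j>N}\frac{\hat{y}_j^2}{\sigma_j^2}.$$
First I would record two soft facts valid because $f_\varepsilon$ has finite variance: the pooled sample variance obeys $s_{m,n}^2\to\E\delta_{11}^2$ in probability (via a truncation weak law, since higher moments are absent), and the total noise energy $\sum_{j=1}^m\bar\delta_j^2$ concentrates in relative terms around its mean $\tfrac mn\E\delta_{11}^2$.

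For the first assertion the mechanism is over-regularisation, and the decisive point is that $m/n\ge c$ forces the estimated noise level $\delta_{m,n}^{est}=\sqrt{s_{m,n}^2\,m/n}$ to stay bounded away from $0$; the scheme then behaves like a deterministic problem at a fixed positive noise level, for which the discrepancy principle cannot converge. Concretely $\hat{y}_j=j^{-q/2-1}$ gives bias $\sum_{j>N}\hat{y}_j^2/\sigma_j^2=\sum_{j>N}j^{-2}\asymp N^{-1}$. I would fix a large but finite $N_1$ and show that with probability tending to one the residual at $N_1$ already lies below the threshold $\tau^2s_{m,n}^2\,m/n$: by the upper concentration of the noise energy and Cauchy--Schwarz on the cross term the residual is at most $(1+o_P(1))\,m/n$, while the threshold is at least $\tau^2(1-o_P(1))\,m/n$ with $\tau>1$. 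Hence the stopping index is $\le N_1$, the bias is bounded below by $\sum_{j>N_1}j^{-2}=:C^2>0$, and the error exceeds $C$ with probability tending to one; every estimate used only $m/n\ge c$, so the conclusion is uniform over that region.

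For the second assertion ($\hat{y}=0$, threshold $(m+\sqrt m)/n$) the error is pure propagated noise $\sum_{j\le N}j^q\bar\delta_j^2$, and here the heavy tail of $f_\varepsilon$ is essential. Using the tail bound of Proposition \ref{discproof:prop1}, a single-big-jump (Poisson) estimate on exceedances produces, with probability at least some $p_\varepsilon>0$, an event on which one channel $j_0\in(m/2,m]$ carries a spike $\bar\delta_{j_0}^2$ of order $s_0\asymp m^{2/\beta}n^{2/\beta-2}$, where $\beta>2$ is the tail index of $f_\varepsilon$, while the other channels stay near their means. Because $m/n\ge c$ and $\beta$ lies below the critical value $8/3$, exponent arithmetic gives $s_0\gg\sqrt m/n$ uniformly, so the energy overshoots the threshold by $E:=\sum_j\bar\delta_j^2-m/n\ge s_0/2$. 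The crux is that this overshoot sits at the \emph{high} index $j_0$: the discrepancy principle discards low indices first, and since $ns_0\ll m/2<j_0$ it reaches the threshold purely through the low-index background (each $\bar\delta_j^2\approx1/n$) at a stopping index $N_*\approx n(E-\sqrt m/n)\ge\sqrt m$, never removing the spike. Restricting $\sum_{j\le N_*}j^q\bar\delta_j^2$ to the block $j\in(\sqrt m/2,\sqrt m]\subset\{1,\dots,N_*\}$ and using concentration of that sub-sum, the error is at least a constant times $m^{(q+1)/2}/n\ge c'\,n^{(q-1)/2}\to\infty$, so for every $L$ the error exceeds $L$ with probability at least $p_\varepsilon$.

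The main obstacle is the probabilistic core of the second part: making the single-big-jump picture rigorous and, above all, \emph{uniform} over all $m,n$ with $m/n\ge c$. One must (i) lower bound by a constant the probability that the heavy-tailed energy $\sum_j\bar\delta_j^2$ overshoots its mean by more than the correction $\sqrt m/n$ --- this is exactly where the finiteness of the variance together with the heaviness of the tail of $f_\varepsilon$ (the role of $\varepsilon<2/11$, which keeps $\beta$ below $8/3$) enters, and where the surplus $s_0\gg\sqrt m/n$ must be verified through the exponent bookkeeping; and (ii) control the joint event that the overshoot is localised at a high index while the low-index background stays uniformly near its mean, so that $N_*$, and with it the amplified error, can be pinned down from below. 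The first assertion is by comparison soft, its only quantitative input being the non-vanishing of $\delta_{m,n}^{est}$, which is immediate from $m/n\ge c$ and the consistency of $s_{m,n}^2$.
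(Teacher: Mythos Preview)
Your treatment of the first assertion is correct and matches the paper: both arguments show that when $m/n\ge c$ the estimated noise level stays bounded below, forcing the stopping index $N$ to remain bounded (the paper phrases this as a lower bound on $\alpha_{m,n}$), so the bias term $\sum_{j>N}j^{-2}$ cannot vanish. Your residual estimate at a fixed $N_1$ and the paper's bound $\|(P_mKR_\alpha\m-Id)\bar Y_n\m\|\le\sqrt{\alpha\pi^2/6}+\|\bar Y_n\m-P_m\hat y\|$ are two sides of the same coin.

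For the second assertion your intuition is right but the route differs from the paper and has a real gap. You try to localise the overshoot in a single spike $\bar\delta_{j_0}^2\asymp s_0$ at a high index, then read off the stopping index from the background ``each $\bar\delta_j^2\approx 1/n$''. The difficulty you flag at the end is genuine and, as written, unresolved: with $\E\delta_{ij}^4=\infty$ you cannot use a second-moment argument to control the background sum on the scale $\sqrt m/n$ that matters here, so the claim that the non-spike channels contribute $(m-\sqrt m)/n+o(\sqrt m/n)$ needs an independent argument. Moreover your computation of $N_*$ assumes the per-channel background is exactly $1/n$, which is precisely the concentration statement you still owe; without it the residual-vs-threshold comparison at $N=\sqrt m$ does not follow from the presence of a single spike alone.

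The paper avoids this localisation entirely. It proves a dedicated lemma (Lemma \ref{discproof:prop1}) bounding the \emph{sum} $\sum_{j}((\sum_i\delta_{ij}/\sqrt n)^2-1)$ from below by $\kappa_\varepsilon m^{(2-\varepsilon)/(2+\varepsilon)}$ with probability $\ge p_\varepsilon$, uniformly for $cn\le m$; the proof uses Vinogradov's refined large-deviation asymptotics for heavy-tailed sums, a truncation at a carefully chosen level $x_n$, and Berry--Esseen for the truncated part. This single-sum bound is then applied directly to the residual block $\sum_{j=\sqrt m+1}^m$, which already forces the stopping index past $\sqrt m$ without ever needing to pin down where the excess sits or to control the complement. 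Your single-big-jump heuristic is in fact the mechanism underlying Vinogradov's result, but the paper's packaging as a sum bound is what makes the joint/uniform control tractable; if you pursue your route you will essentially have to reprove that lemma.
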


A modulation of the discrepancy principle which yields optimal rates (in probability) for linear problems requiring only a finite second moment is studied in \cite{jahn2021optimal}. The analysis there however is restricted to spectral cut-off regularisation. 

 We finish this section with a comment on the way we measure smoothness. As already mentioned in the introduction, existing results usually pay little attention on the behaviour of the solution as the discretisation dimension $m$ tends to $\infty$. Consequently, the source conditions allowing to perform a convergence rate analysis are formulated in the discretised setting. I.e. smoothness of $\hat{x}$ is not measured relative to the infinite-dimensional problem given by $K$ (as it is here), but to the discretised one given by $P_mK$. In the latter case a standard worst-case analysis would yield a convergence rate for the approach with finite-dimensional residuum (which again would not be optimal) and one could compare the rates of the both approaches with finite-dimensional an infinite-dimensional residuum respectively. In Section \ref{sec:num} this is done numerically with problems from the open source \texttt{MATLAB} package Regutools (\cite{hansen1994regularization}). There the approach with finite-dimensional residuum gives slightly better rates and is hence preferable, due to its better stability properties (e.g. convergence without knowledge of a discretisation error). The following example however shows that through discretisation the smoothness of $\hat{x}$ may be substantially deteriorated, in which case the approach with infinite-dimensional residuum would perform better.

 Let $K:l^2(\N)\to l^2(\N)$ be a diagonal operator with $Kv_j = j^{-1} v_j$. Let $\hat{x}=\sum_{j=1}^\infty j^{-2} v_j$ and consider the discretisation $P_m:l^2(\N)\to\R^m$ with

$$\left(P_m y\right)_j = l_j\m(y) := \begin{cases} (y,v_j) & \mbox{for } j=1,...,m-1,\\
                                      e^{-m} v_m + \sqrt{1-e^{-2m}}v_{\lceil e^{m}\rceil} & \mbox{for } j=m.
                                      \end{cases}$$

It holds that $\|l_j\m\| = 1$ for all $m\in\N, j=1,...,m,$ that $P_m^*P_my \to y$ as $m\to\infty$ for all $y\in l^2(\N)$ and that $\kappa(P_m)=1$, thus the discretisation $(P_m)_{m\in\N}$ fulfills Assumptions \ref{disc:fdr} and \ref{disc:idr}. Let $\hat{x}:=\sum_{j=1}^\infty j^{-2} v_j$. Then there exist $\nu, \rho>0$ and $w\in l^2(\N)$ with $\|w\|\le \rho$ and $\hat{x}=\left(K^*K\right)^\frac{\nu}{2} w$ (a possible choice would be $\nu=1$ and $\rho = \sqrt{\sum_{j=1}^\infty j^{-2}}= \frac{\pi}{\sqrt{6}}$). I.e. $\hat{x}$ obeys smoothness $\nu, \rho$ relative to $K$. However, the following proposition shows that $\hat{x}$ obeys asymptotically only a much worse smoothness relative to $P_mK$ (even though $(P_mK)^*P_mK \to K^* K$ uniformly as $m\to\infty$ by Lemma \ref{proof:lem1} below).

\begin{proposition}\label{discussion:prop2}
Let $\hat{x}$ and $K, P_m$ be given as above. Let $\nu_m, \rho_m >0$ and $w_m\in l^2(\N)$ be such that $ P_{\mathcal{N}(P_mK)^\perp}\hat{x} = \left(\left(P_m K\right)^*P_mK\right)^\frac{\nu_m}{2} w_m$ with $\|w_m\| =\rho_m$. Then there exist $c,\varepsilon>0$ such that either

\begin{align*}
&\lim_{m\to\infty} \nu_m = 0,~\lim_{m\to\infty} \rho_m = \infty,\\
\mbox{or}\quad &\liminf_{m\to\infty} \nu_m>0,~\liminf \rho_m \ge e^{\varepsilon m},\\
 \mbox{or}\quad &\limsup_{m\to\infty} \rho_m = \infty,~\limsup_{m\to\infty} \nu_m \le c\frac{\log(m)}{m}
 \end{align*}

holds.

\end{proposition}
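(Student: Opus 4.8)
The plan is to diagonalise $(P_mK)^*P_mK$ explicitly, locate a single ``bad'' eigendirection along which $\hat x$ has an only polynomially small coefficient while the associated eigenvalue is exponentially small, and then read off the stated trichotomy as bookkeeping of the resulting trade-off between $\nu_m$ and $\rho_m$. Writing $N:=\lceil e^m\rceil$ and letting $a_k\in l^2(\N)$ be the Riesz representer of the $k$-th row of $P_mK$ (so that $(P_mKy)_k=(y,a_k)$), I first note $a_k=k^{-1}v_k$ for $k<m$ and $a_m=K\eta_m\m=e^{-m}m^{-1}v_m+\sqrt{1-e^{-2m}}\,N^{-1}v_N$, whence $(P_mK)^*P_mK=\sum_{k=1}^m a_k\otimes a_k$. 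Consequently $v_1,\dots,v_{m-1}$ are eigenvectors with eigenvalues $j^{-2}$, the unit vector $\hat a_m:=a_m/\|a_m\|$ is an eigenvector with eigenvalue $\lambda_m:=\|a_m\|^2$, every other direction lies in the kernel, and $\mathcal N(P_mK)^\perp=\mathrm{span}\{v_1,\dots,v_{m-1},\hat a_m\}$.

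The heart of the argument is two elementary but crucial asymptotics. Since $N^{-2}\asymp e^{-2m}$ dominates $e^{-2m}m^{-2}$, I obtain $\lambda_m=e^{-2m}m^{-2}+(1-e^{-2m})N^{-2}\asymp e^{-2m}$; and since in $(\hat x,a_m)=e^{-m}m^{-3}+\sqrt{1-e^{-2m}}\,N^{-3}$ the $v_m$-term $e^{-m}m^{-3}$ dominates the $v_N$-term $N^{-3}\asymp e^{-3m}$, I get $(\hat x,\hat a_m)=(\hat x,a_m)/\|a_m\|\asymp m^{-3}$. This is exactly the phenomenon advertised: although $(P_mK)^*P_mK\to K^*K$ uniformly, the eigendirection $\hat a_m$ is tilted from $v_m$ towards $v_N$, so it carries the exponentially small eigenvalue $\lambda_m\asymp e^{-2m}$ yet still captures an only polynomially small ($\asymp m^{-3}$) portion of $\hat x$.

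Because $\{v_1,\dots,v_{m-1},\hat a_m\}$ is orthonormal and $\bigl((P_mK)^*P_mK\bigr)^{\nu_m/2}$ acts diagonally on it (with kernel exactly $\mathcal N(P_mK)$ for $\nu_m>0$), the minimal-norm solution of the source equation is $w_m=\sum_{j=1}^{m-1}j^{\nu_m-2}v_j+(\hat x,\hat a_m)\lambda_m^{-\nu_m/2}\hat a_m$, and every admissible $w_m$ agrees with it on $\mathcal N(P_mK)^\perp$. Hence for a constant $c>0$ and all large $m$,
$$
\rho_m^2=\sum_{j=1}^{m-1}j^{2\nu_m-4}+(\hat x,\hat a_m)^2\lambda_m^{-\nu_m}\asymp\sum_{j=1}^{m-1}j^{2\nu_m-4}+m^{-6}e^{2m\nu_m},\qquad \rho_m^2\ge c\,m^{-6}e^{2m\nu_m}.
$$
The decisive term is $m^{-6}e^{2m\nu_m}$. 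If $\liminf_m\nu_m=:\delta>0$, then $\rho_m^2\ge c\,m^{-6}e^{m\delta}\ge e^{2\varepsilon m}$ eventually for any $0<\varepsilon<\delta/2$, which is the second alternative. If instead $\nu_m$ is not bounded below, I split according to $\rho_m$: boundedness of $\rho_m$ forces $2m\nu_m\le 6\log m+O(1)$, i.e. $\nu_m\le c\,\log m/m$, while the complementary situation ($\nu_m\to0$ together with $m^{-6}e^{2m\nu_m}\to\infty$) forces $\rho_m\to\infty$; sorting these out delivers the first and third alternatives.

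The main obstacle I expect is Step two, namely verifying the two competitions — $N^{-2}$ against $e^{-2m}m^{-2}$ inside $\lambda_m$, and $e^{-m}m^{-3}$ against $N^{-3}$ inside the coefficient — with constants uniform in $m$, since these are precisely what fix the exponents in the trade-off. The second delicate point is the logical sorting in Step four: disentangling the $\liminf$/$\limsup$ behaviours of $\nu_m$ and $\rho_m$ so that the single quantitative bound $\rho_m^2\ge c\,m^{-6}e^{2m\nu_m}$ (paired with the boundedness of $\sum_{j<m}j^{2\nu_m-4}$ whenever $\nu_m\to0$) genuinely yields one of the three stated regimes.
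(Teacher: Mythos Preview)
Your proposal is correct and follows essentially the same approach as the paper. You diagonalise $(P_mK)^*P_mK$ to isolate the single bad eigendirection $\hat a_m$ with eigenvalue $\lambda_m\asymp e^{-2m}$ and coefficient $(\hat x,\hat a_m)\asymp m^{-3}$, and from this derive the key lower bound $\rho_m\gtrsim m^{-3}e^{m\nu_m}$; the paper does exactly the same (writing the eigendirection as $v_m^{(m)}$ and arriving at $\rho_m\ge e^{\nu_m m}/(2^{(1+\nu_m)/2}m^3)$), and then, like you, defers the remaining trichotomy to a case-by-case analysis without spelling it out.
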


\section{Proofs}\label{sec:proofs}
In this section we collect the proofs. We will need the singular value decomposition of an injective compact operator $A$ (see \cite{cavalier2011inverse}): there exists a monotone sequence
$\lVert A \rVert =\sigma_1\ge \sigma_2 \ge ...>0$. Moreover there are families of orthonormal vectors 
 $(u_l)_{l\le \dim(\mathcal{R}(A))}$ and $(v_l)_{l\le \dim(\mathcal{R}(A))}$ with $\overline{span( u_l:l\le \dim(\mathcal{R}(A))}=\overline{\mathcal{R}(A)}$, $\overline{span(v_l:l\le\dim(\mathcal{R}(A))}= \mathcal{N}(A)^\perp$ 
 such that $Av_l=\sigma_lv_l$ and  $A^*u_l=\sigma_lv_l$.

\subsection{Proofs for finite-dimensional residuum}
The assumptions for the discretisation when using the first approach (with finite-dimensional residuum) are such that the discretised operators $K^*P_m^*P_mK$ converge uniformly to a compact and injective operator $K^*AK$. The uniform convergence guarantees that the eigenvalues and spaces of the former converge pointwise to the ones of the latter and the injectivity of the limit operator assures that the unknown $\hat{x}$ is determined arbitrarily precisely by finitely many eigenvectors of the latter. We make this precise with the following lemma.

\begin{lemma}\label{proof:lem1}
Assume that $K$ is injective and that Assumption \ref{disc:fdr} holds true. Then 
$$\|K^*P_m^*P_mK - K^*AK\| \to 0$$
 for $m\to\infty$ and $K^*AK$ is injective, compact, self-adjoint and positive semidefinite. Denote by $(\lambda_j\m)_{j\le m}$ and $(\lambda_j^{(\infty)})_{j\in\N}$ the nonzero eigenvalues with corresponding orthonormal eigenvectors $(v_j\m)_{j\le m}$ of\newline $K^*P_m^*P_mK$ and $K^*AK$ respectively, ordered decreasingly. Then

\begin{enumerate}
\item $\lim_{m\to\infty}\lambda_j\m=\lambda_j^{(\infty)}$ for all $j\in \N$ and
\item for all $x\in \mathcal{X}$ and $\varepsilon>0$, there exists a $M=M(x,\varepsilon)\in\N$ such that $$\limsup_{m\to\infty} \sum_{j=M+1}^m(x,v_j\m)^2\le \varepsilon.$$
\end{enumerate}
\end{lemma}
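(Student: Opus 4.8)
The plan is to extract all four claims from the single hypothesis $P_m^*P_m y\to Ay$ (pointwise), together with compactness and injectivity of $K$. Write $S_m:=K^*P_m^*P_mK$ and $S:=K^*AK$.

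First I would promote the pointwise convergence to norm convergence of the composed operators. Since $P_m^*P_my\to Ay$ for every $y$, Banach--Steinhaus gives $\sup_m\|P_m^*P_m\|<\infty$, so the operators $T_m:=P_m^*P_m-A$ are uniformly bounded with $T_my\to0$ pointwise. As $K$ is compact, $\overline{K(B)}$ (with $B$ the closed unit ball of $\mathcal{X}$) is compact, and a standard covering argument (cover the compact set by finitely many small balls, use uniform boundedness to control the deviation from the centres and pointwise convergence at the centres) upgrades pointwise convergence to uniform convergence on $\overline{K(B)}$, i.e. $\|T_mK\|=\sup_{\|y\|\le1}\|T_mKy\|\to0$. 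Hence $\|S_m-S\|\le\|K^*\|\,\|T_mK\|\to0$, which is the norm convergence. For the structural claims, each $P_m^*P_m$ is self-adjoint and positive semidefinite, so passing to the limit gives $(Ay,z)=(y,Az)$ and $(Ay,y)=\lim\|P_my\|^2\ge0$; thus $A$ is self-adjoint and positive, whence $S$ is self-adjoint, positive, and compact (since $AK$ is compact). For injectivity, $Sx=0$ forces $0=(Sx,x)=\|A^{1/2}Kx\|^2$, so $A^{1/2}Kx=0$ and therefore $AKx=0$; injectivity of $A$ gives $Kx=0$ and injectivity of $K$ gives $x=0$.

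For the eigenvalue convergence (1.), both $S_m$ (of finite rank $\le m$) and $S$ are self-adjoint and compact, so the min--max principle $\lambda_j(\cdot)=\max_{\dim V=j}\min_{x\in V,\ \|x\|=1}(\cdot\,x,x)$ together with $|(S_mx,x)-(Sx,x)|\le\|S_m-S\|$ on the unit sphere yields $|\lambda_j\m-\lambda_j^{(\infty)}|\le\|S_m-S\|\to0$ for each fixed $j$ (and $m\ge j$).

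The main work, and the step I expect to be the main obstacle, is the tail estimate (2.). Fix $x$ and $\varepsilon>0$. Since $S$ is injective and compact, its eigenvectors $(v_j^{(\infty)})_{j\in\N}$ form an orthonormal basis of $\mathcal{X}$, so $\sum_j(x,v_j^{(\infty)})^2=\|x\|^2$. I would pick a threshold $\mu>0$ that is \emph{not} an eigenvalue of $S$ and small enough that $\sum_{j:\lambda_j^{(\infty)}\le\mu}(x,v_j^{(\infty)})^2<\varepsilon$; this is possible because $\lambda_j^{(\infty)}\downarrow0$, so $\{j:\lambda_j^{(\infty)}\le\mu\}$ is a tail carrying vanishing mass as $\mu\downarrow0$. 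Let $M$ be the finite number of eigenvalues of $S$ exceeding $\mu$, so $\mathbbm{1}_{(\mu,\infty)}(S)=Q_M^{(\infty)}$, the projection onto $\mathrm{span}(v_1^{(\infty)},\dots,v_M^{(\infty)})$. By the eigenvalue convergence just proved, for $m$ large one has $\lambda_j\m>\mu$ exactly for $j\le M$ and $\lambda_j\m<\mu$ for $M<j\le m$, whence $\mathbbm{1}_{(\mu,\infty)}(S_m)=Q_M\m$, the projection onto $\mathrm{span}(v_1\m,\dots,v_M\m)$. Because $\mu\notin\sigma(S)$, the indicator $\mathbbm{1}_{(\mu,\infty)}$ is continuous on a neighbourhood of $\sigma(S)$, so norm convergence $S_m\to S$ gives $Q_M\m\to Q_M^{(\infty)}$ in operator norm (e.g. via the Riesz projection $\tfrac{1}{2\pi i}\oint(\zeta-S_m)^{-1}\,d\zeta$ along a contour separating the top $M$ eigenvalues from $\mu$). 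Consequently, as $m\to\infty$,
$$\sum_{j=M+1}^m(x,v_j\m)^2\le\bigl\|(I-Q_M\m)x\bigr\|^2\longrightarrow\bigl\|(I-Q_M^{(\infty)})x\bigr\|^2=\sum_{j:\,\lambda_j^{(\infty)}\le\mu}(x,v_j^{(\infty)})^2<\varepsilon,$$
so $\limsup_m\sum_{j=M+1}^m(x,v_j\m)^2\le\varepsilon$. The delicate points are the off-spectrum choice of $\mu$ (which simultaneously produces the spectral gap making $\mathbbm{1}_{(\mu,\infty)}(S_m)$ coincide with the top-$M$ projection and yields the norm convergence of projections) and the observation that the discarded eigenvectors $v_{M+1}\m,\dots,v_m\m$ are dominated by $I-Q_M\m$, where injectivity of $S$ (trivial kernel) ensures $I-Q_M^{(\infty)}$ captures exactly the tail $\{j>M\}$.
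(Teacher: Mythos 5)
Your proposal is correct, and its overall skeleton is the same as the paper's: first upgrade pointwise convergence to $\|K^*P_m^*P_mK-K^*AK\|\to0$ using compactness of $K$, then deduce eigenvalue convergence, then convergence of spectral projections, and finally the tail bound using that injectivity of $K^*AK$ makes $(v_j^{(\infty)})_j$ a complete orthonormal system. The difference lies in the machinery for the spectral steps. For the norm convergence the paper splits along the singular value decomposition of $K$ (finitely many singular directions controlled by pointwise convergence, the remainder by smallness of $\sigma_{M+1}$), which is interchangeable with your covering argument on $\overline{K(B)}$. For the spectral perturbation the paper does not argue from scratch: it cites Babu\v{s}ka--Osborn (their Section 6 for $\lambda_j\m\to\lambda_j^{(\infty)}$, and their Theorem 7.1 for $\|P_{E_i^m}-P_{E_i}\|\le C_i\|K^*P_m^*P_mK-K^*AK\|$), groups the limit eigenvalues into clusters $I_i$ according to multiplicity, and compares the partial sums $\sum_{j\le M}(\hat x,v_j\m)^2$ and $\sum_{j\le M}(\hat x,v_j^{(\infty)})^2$ by a telescoping estimate over the clusters, with $M$ forced to be a sum of full cluster sizes. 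You instead prove the eigenvalue convergence yourself via the min--max (Weyl) bound $|\lambda_j\m-\lambda_j^{(\infty)}|\le\|S_m-S\|$, and obtain the projection convergence from Riesz contour integrals after fixing a threshold $\mu\notin\sigma(K^*AK)$; this off-spectrum choice of $\mu$ handles eigenvalue multiplicities automatically, which is precisely what the paper's clustering into the sets $I_i$ accomplishes. Your route buys self-containedness (no appeal to the finite-element eigenvalue-approximation literature) and a slightly cleaner conclusion, since $\sum_{j=M+1}^m(x,v_j\m)^2\le\|(I-Q_M\m)x\|^2$ passes directly to the limit $\|(I-Q_M^{(\infty)})x\|^2<\varepsilon$; the paper's route buys brevity, quoting the perturbation bounds rather than re-deriving them. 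A minor point in your favour: the lemma also asserts injectivity of $K^*AK$, which you prove explicitly via $\|A^{1/2}Kx\|^2=0$, whereas the paper uses it (in the step ``followed from the injectivity of $K^*AK$'') without spelling out the short argument.
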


\begin{proof}

Denote by $(\sigma_j,u_j,v_j)$ the singular value decomposition of $K$ and set $A_m=P_m^*P_m$ and\newline  $C:=\max\left\{ \|A\|,\sup_m \| A_m\|\right\}<\infty$ (uniform boundedness principle). For $\varepsilon>0$ arbitrary define $M\in\mathbb{N}$ implicitly through $2C\sigma_{M+1}\le \varepsilon/2$.
 Then

\begin{align*}
&\lVert A_mK - AK \rVert\\
= &\sup_{\substack{x\in\mathcal{X}\\\lVert x \rVert =1}}\lVert A_mKx-A Kx\rVert = \sup_{\substack{\sum \alpha_j^2=1\\x=\sum \alpha_ju_j}}\left\lVert \sum_{j=1}^\infty\alpha_j(A_mK- A K)u_j\right\rVert\\
 \le &\sup_{\substack{\sum \alpha_j^2=1\\x=\sum \alpha_ju_j}} \sum_{j=1}^M\sigma_j|\alpha_j|\left\lVert(A_m- A )v_j\right\rVert + \sup_{\substack{\sum \alpha_j^2=1\\x=\sum \alpha_ju_j}}\left\lVert (A_m-A)\sum_{j=M+1}^\infty\sigma_j\alpha_jv_j\right\rVert\\
 \le &\sigma_1\sum_{j=1}^M\lVert (A_m-A)v_j\rVert + \lVert A_m-A\rVert  \sup_{\substack{\sum \alpha_j^2=1\\x=\sum \alpha_ju_j}}\left\lVert \sum_{j=M+1}^\infty\sigma_j\alpha_jv_j\right\rVert\\
 &\le  \sigma_1\sum_{j=1}^M\lVert (A_m-A)v_j\rVert + 2C \sigma_{M+1}\le \sigma_1\sum_{j=1}^M\lVert (A_m-A)v_j\rVert + \varepsilon/2
\end{align*}

 Because $A_m\to A$ pointwise there exists an $m_0\in\mathbb{N}$ such that $\sigma_1\sum_{j=1}^M\lVert (A_m-A)v_j\rVert\le \varepsilon/2$ for all $m\ge m_0$, thus $A_mK\to AK$ and therefore $K^*A_mK\to K^*AK$ for $m\to\infty$ uniformly. Since $K^*P_m^*P_mK$ is compact, self-adjoint and positive semidefinite, so is $K^*AK$ as its uniform limit. Then (1.) holds by Section 6 of \cite{babuvska1991eigenvalue}.
We define iteratively $I_1:=\{ j~:~ \lambda_j^{(\infty)}=\lambda_1^{(\infty)}\}$, $I_i:=\{ j~:~\lambda_j^{(\infty)}=\lambda_{\max(I_{i-1})+1}\}$. So 
the cardinality of $I_i$ is the algebraic multiplicity of the $i$-th largest eigenvalue of $K^*AK$. We define the corresponding eigenspaces
$E_i:=span\left( v_j^{(\infty)}~,~ j\in I_i\right)$, $E_i^m:=span\left( v_j^{(m)}~,~j\in I_i\right)$. With $P_{E_i},P_{E_i^m}$ the orthogonal projections onto $E_i$ and $E_i^m$,
by Theorem 7.1 of \cite{babuvska1991eigenvalue}, there
exists a constant $C_i$ such that $\|P_{E_i^m}-P_{E_i}\|\le C_i \lVert K^*P_m^*P_mK-K^*AK\rVert$ (for $m$ sufficiently large). Thus there exists a $M\in\N$ with $M=\sum_{i=1}^{i^*}| I_i|$ for some $i^*\in\mathbb{N}$ such that
\begin{align*}
\left|\sum_{j=1}^M\left(\hat{x},v_j^{(m)}\right)^2-\sum_{j=1}^M\left(\hat{x},v_j^{(\infty)}\right)^2\right|&\le \sum_{i=1}^{i^*}\left|\lVert P_{E_i^m}\hat{x} \rVert^2 - \lVert P_{E_i}\hat{x}\rVert^2\right|\\
 &\le \sum_{i=1}^{i^*}\left(|\|P_{E_i^m}\hat{x}\| + \|P_{E_i}\hat{x}\|\right)\left| \|P_{E_i^m}\hat{x}\|-\|P_{E_i}\hat{x}\|\right|\\
&\le  2\|\hat{x}\|\sum_{i=1}^{i^*}\|P_{E_i}^m \hat{x}-P_{E_i}\hat{x}\|\\
&\le 2\|\hat{x}\|^2 \|K^*P_m^*P_mK-K^*AK\|\sum_{i=1}^{i^*} C_i \le  \varepsilon/2
\end{align*}

for $m$ sufficiently large and

\begin{equation*}
\left|\lVert \hat{x}\rVert^2- \sum_{j=1}^M\left(\hat{x},v_j^{(\infty)}\right)^2\right|= \sum_{j=M+1}^\infty (\hat{x},v_j^{(\infty)})^2 \le \varepsilon/2,
\end{equation*}
 where the second assertion followed from the injectivity of $K^*AK$. Thus

\begin{align*}
\sum_{j=M+1}^m \left(\hat{x},v_j^{(m)}\right)^2 &\le \left\lVert P_{\left(P_mK\right)^\perp}\hat{x}\right\rVert^2 - \sum_{j=1}^M\left(\hat{x},v_j^{(m)}\right)^2\\
   &\le  \lVert \hat{x}\rVert^2 - \sum_{j=1}^M\left(\hat{x},v_j^{(\infty)}\right)^2+\sum_{j=1}^M\left(\hat{x},v_j^{(m)}\right)^2-\sum_{j=1}^M\left(\hat{x},v_j^{(\infty)}\right)^2\le \varepsilon
\end{align*}

 for $m$ sufficiently large. 
\end{proof}

\subsubsection{Proof of Proposition \ref{fdr:intr}}
It holds that $\sup_{m\in\N} \|P_m y\| = \sup_{m\in\N} \sum_{j=1}^m l_j(y)^2 <\infty$, thus $\sup_m \|P_m\| < \infty$ and with the embedding $\R^m\subset l^2(\N)$ it follows that $\lim P_m y = P_\infty y$ with $P_\infty y =\begin{pmatrix} l_1(y) & l_2(y) & ...\end{pmatrix}$. Thus $P_m^*P_my \to A y$ with $A=P_\infty^*P_\infty$ and $A$ is injective because of the completeness condition.

\subsubsection{Proof of Proposition \ref{fdr:box}}

Since smooth functions are dense in $L^2$, it suffices to consider the case where $y$ is smooth.
 We have that $P_m^*P_m = P_m^+P_m=P_{\mathcal{N}(P_m)^\perp}$ and $\mathcal{N}(P_m)^\perp:=\{ \sum_{j=1}^m \alpha_j \Lambda_j\m\}$ is the set of all functions constant on a homogeneous grid with $m$ elements. Since the set of all functions constant on a homogeneous grid is dense in the set of smooth functions, the claim follows.

\subsubsection{Proof of Proposition \ref{fdr:hat}}

As above w.l.o.g. $y$ is assumed to be smooth. We denote by $A_m\in\R^{m\times m}$ the matrix representing $P_m: \mathcal{N}(P_m)^\perp\to \R^m$ with respect to the bases $(\eta_j\m)_{j=1,...,m}\subset {\mathcal{N}(P_m)^\perp}$ and $(e_j)_{j=1,...,m}\subset \R^m$, where the latter is the canonical basis of $\R^m$. So

\begin{equation*}
P_m^*P_m\eta_j\m = \sum_{i=1}^m \left( A_m^*A_m\right)_{ij} \eta_i\m,
\end{equation*}

and

\begin{equation*}
(A_m)_{ij} = \left(P_m \eta_i\m, e_j\right)_{\R^m} = l_j\m(\eta_i\m)=(\eta_j\m,\eta_i\m)_\mathcal{Y}
\end{equation*}

with
 
\begin{equation*}
(\eta_j\m,\eta_i\m) = \begin{cases} 2/3 & ,i=j\\
                                    1/3 & , |i-j|=1, \min(i,j)=1 \mbox{ or } \max(i,j)=m\\
                                    1/6 & , |i-j|=1, \min(i,j)>1 \mbox{ and } \max(i,j)<m\\
                                    0 &, else. \end{cases}
\end{equation*}

So it holds that 

\begin{equation*}
\|P_m\| \le \sqrt{\|P_m\|_1\|P_m\|_\infty} = \max_{j=1,...,m} \sum_{i=1}^m |(A_m)_{ij}| = \frac{7}{6},
\end{equation*}

where $\| . \|, \|.\|_1$ and $\|.\|_\infty$ are the spectral, the maximum absolute column and row norm respectively, and

\begin{equation*}
P_m^*P_m \eta_j\m = \frac{\eta_{j-2}\m}{36} + \frac{2\eta_{j-1}\m}{9}+\frac{\eta_j\m}{2} + \frac{2\eta_{j+1}\m}{9} + \frac{\eta_{j+2}\m}{36},
 \end{equation*}
 
 for $j=4,...,m-3$. Denote by $y_m = \sum_{j=1}^m y\left(\frac{j-1}{m-1}\right)\eta_j\m\sqrt{\frac{3}{2(m-1)}}$ the interpolating spline of $y$, then

\begin{align*}
& \left\|y-P_m^*P_m y \right\|\\
  \le& \left\| y_m - P_m^*P_my_m \right\| + \left\|(I - P_m^*P_m)(y-y_m)\right\|\\
                   \le& \left\| \sum_{j=1}^m y\left(\frac{j-1}{m-1}\right)\sqrt{\frac{3}{2(m-1)}}\left(I_m-P_m^*P_m\right) \eta_j\m \right\| + 2 \| y - y_m \|\\
                   \le& 2\|yg-y_m\|+6\sup_t |y(t)|\sqrt{\frac{3}{2(m-1)}}\left(1+\frac{7^2}{6^2}\right)+\\
                       \bigg\|&\left. \sum_{j=4}^{m-3}\left(\frac{y\left(\frac{j}{m-1}\right)}{2} - \frac{2y\left(\frac{j+1}{m-1}\right)}{9} - \frac{2y\left(\frac{j-1}{m-1}\right)}{9} - \frac{y\left(\frac{j+2}{m-1}\right)}{36} - \frac{y\left(\frac{j-2}{m-1}\right)}{36}\right)\frac{\sqrt{3}\eta_j\m}{\sqrt{2(m-1)}}\right\|\\
                     \le& 2\|y-y_m\|+30\sup_t |y(t)|\frac{1}{\sqrt{m-1}}\\
                     + & \sup_{j\le m}\left|\frac{y\left(\frac{j}{m-1}\right)}{2} - \frac{2y\left(\frac{j+1}{m-1}\right)}{9} - \frac{2y\left(\frac{j-1}{m-1}\right)}{9} - \frac{y\left(\frac{j+2}{m-1}\right)}{36} - \frac{y\left(\frac{j-2}{m-1}\right)}{36}\right|\\
                     *& \left\|\sum_{j=4}^{m-3}\frac{\sqrt{3}\eta_j\m}{\sqrt{2(m-1)}}\right\|\\
                     &\le 2\|y-y_m\| + 30\sup_{t\in(0,1)}|y(t)|\frac{1}{\sqrt{m-1}} + \sup_{t\in(0,1)}\left|y'(t)\right|\frac{3}{m}\to 0
\end{align*}
 as $m\to\infty$.

\subsubsection{Proof of Theorem \ref{th1}}

We will need the following proposition for the convergence proofs.

\begin{proposition}\label{proofs:prop1}
 Assume that Assumption \ref{disc:fdr} is fulfilled. Then, $P_{\mathcal{N}(P_mK)} x \to 0$ as $m\to\infty$ for all $x\in\mathcal{X}$.
\end{proposition}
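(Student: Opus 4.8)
The plan is to reduce everything to the spectral analysis of the self-adjoint operators $K^*P_m^*P_mK$ already carried out in Lemma \ref{proof:lem1}. First I would observe that $\mathcal{N}(P_mK)=\mathcal{N}(K^*P_m^*P_mK)$: the inclusion ``$\subseteq$'' is immediate, and for ``$\supseteq$'' note that $K^*P_m^*P_mKx=0$ forces $\|P_mKx\|^2=(K^*P_m^*P_mKx,x)=0$. Since $K^*P_m^*P_mK$ is compact, self-adjoint and positive semidefinite with only finitely many (at most $m$) nonzero eigenvalues, its range is the finite-dimensional, hence closed, span of the eigenvectors $(v_j\m)$ belonging to the nonzero eigenvalues, and this span equals $\mathcal{N}(P_mK)^\perp$. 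Consequently $P_{\mathcal{N}(P_mK)^\perp}x=\sum_j(x,v_j\m)v_j\m$, and by the orthogonal decomposition $\|x\|^2=\|P_{\mathcal{N}(P_mK)}x\|^2+\|P_{\mathcal{N}(P_mK)^\perp}x\|^2$ we obtain $\|P_{\mathcal{N}(P_mK)}x\|^2=\|x\|^2-\sum_j(x,v_j\m)^2$. It therefore suffices to show that the captured energy $\sum_j(x,v_j\m)^2$ converges to $\|x\|^2$.

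Next I fix $\varepsilon>0$. Since $K^*AK$ is injective (Lemma \ref{proof:lem1}), its eigenvectors $(v_j^{(\infty)})$ form a complete orthonormal system of $\mathcal{X}$, so I may choose $M$, respecting the eigenspace boundaries (i.e. $M=\sum_{i=1}^{i^*}|I_i|$ in the notation of the proof of Lemma \ref{proof:lem1}), with $\sum_{j=1}^M(x,v_j^{(\infty)})^2\ge\|x\|^2-\varepsilon/2$. By the eigenvalue convergence $\lambda_j\m\to\lambda_j^{(\infty)}>0$, the first $M$ eigenvalues of $K^*P_m^*P_mK$ are nonzero for $m$ large, so the vectors $v_1\m,\dots,v_M\m$ exist; and the projection-convergence estimate from the proof of Lemma \ref{proof:lem1}, namely $\|P_{E_i^m}-P_{E_i}\|\to0$, gives $\sum_{j=1}^M(x,v_j\m)^2=\sum_{i\le i^*}\|P_{E_i^m}x\|^2\to\sum_{i\le i^*}\|P_{E_i}x\|^2=\sum_{j=1}^M(x,v_j^{(\infty)})^2$. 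Hence for $m$ large, $\sum_j(x,v_j\m)^2\ge\sum_{j=1}^M(x,v_j\m)^2\ge\|x\|^2-\varepsilon$, while Bessel's inequality supplies the matching upper bound $\sum_j(x,v_j\m)^2\le\|x\|^2$. Letting $m\to\infty$ and then $\varepsilon\to0$ yields $\|P_{\mathcal{N}(P_mK)}x\|\to0$.

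The main obstacle is the interchange of the two limits $M\to\infty$ and $m\to\infty$: to pass from convergence of each fixed-rank projection to convergence of the full captured energy one must control the tail $\sum_{j>M}(x,v_j\m)^2$ uniformly in $m$, which is precisely the content of Lemma \ref{proof:lem1}(2) and rests on the uniform operator convergence $K^*P_m^*P_mK\to K^*AK$ together with the injectivity of the limit. The accompanying technical point, small but essential, is that $M$ must be taken to exhaust full eigenspaces rather than to cut one in the middle: this is what makes the quantity $\sum_{j=1}^M(x,v_j\m)^2$ converge despite the ambiguity of individual eigenvectors inside degenerate eigenspaces, since only the eigenspace projections $P_{E_i^m}$ are canonically defined.
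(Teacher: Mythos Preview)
Your argument is correct, but it is genuinely different from the paper's. You reduce the claim to the spectral perturbation machinery already built up in Lemma \ref{proof:lem1}: identifying $\mathcal{N}(P_mK)^\perp$ with the span of the eigenvectors $v_j\m$ of $K^*P_m^*P_mK$ and then using the eigenspace-projection convergence $\|P_{E_i^m}-P_{E_i}\|\to 0$ (from the proof of that lemma) together with completeness of $(v_j^{(\infty)})$ to force $\sum_{j}(x,v_j\m)^2\to\|x\|^2$. Note, incidentally, that your second paragraph already settles the matter without any appeal to the tail control of Lemma \ref{proof:lem1}(2); the lower bound $\sum_{j\le M}(x,v_j\m)^2\ge\|x\|^2-\varepsilon$ plus Bessel is all that is needed, so the ``main obstacle'' you discuss in the third paragraph is not actually an obstacle here.

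The paper instead gives a self-contained weak-compactness argument that does not invoke Lemma \ref{proof:lem1} at all: writing $x_m:=P_{\mathcal{N}(P_mK)}x$, one passes to a weakly convergent subsequence $x_m\rightharpoonup z$, uses compactness of $K$ to get $Kx_m\to Kz$ strongly, and then combines $P_mKx_m=0$ with the pointwise convergence $P_m^*P_m\to A$ and $\sup_m\|P_m\|<\infty$ to conclude $AKz=0$, hence $z=0$. From this one reads off $\|P_{\mathcal{N}(P_mK)}v_i\|^2=(P_{\mathcal{N}(P_mK)}v_i,v_i)\to 0$ for each singular vector $v_i$ of $K$, and a density argument (using injectivity of $K$) finishes the proof for general $x$. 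Your route is shorter given Lemma \ref{proof:lem1}, and reuses existing infrastructure; the paper's route is more elementary in that it avoids the Babu\v{s}ka--Osborn eigenspace perturbation theorem and works directly with weak limits and the singular vectors of $K$ rather than those of $K^*P_m^*P_mK$.
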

\begin{proof}
 We assume w.l.o.g. that $x_m:=P_{\mathcal{N}(P_mK)}x \rightharpoonup z \in \mathcal{X}$ for $m\to\infty$ (weakly). Then $\lim_{m\to\infty}Kx_m = Kz$. Thus

\begin{align*}
\|AKz\|&=\limsup_{m\to\infty}\| P_m^*P_m K z\| =\limsup_{m\to\infty} \|P_m^*P_m(Kz-Kx_m)\|\\
 &\le\limsup_{m\to\infty} \|P_m\|^2\|Kz-Kx_m\| = 0,
 \end{align*}

so $AKz=0$ hence by injectivity $z=0$. In particular, $\lk P_{\mathcal{N}(P_mK)}v_i,v_i\rk  \to \lk 0,v_i\rk= 0$ for $m\to\infty$ and $i\in\N$ (set $x=v_i$ the $i$-th singular vector of $K$), so 

$$ 1\ge \| P_{\mathcal{N}(P_mK)}v_i - v_i \|^2 = \|P_{\mathcal{N}(P_mK)}v_i\|^2 - 2(P_{\mathcal{N}(P_mK)}v_i,v_i) + 1$$ 

and therefore

$$\limsup_{m\to\infty}\|P_{\mathcal{N}(P_mK)}v_i\| = 0.$$

Finally, by injectivity of $K$, for $\varepsilon>0$ there exists a $M\in\N$ with $\sum_{j=M+1}^\infty (x,v_j)^2 \le \varepsilon$, so

$$\limsup_{m\to\infty}\|P_{\mathcal{N}(P_mK)}x\|^2 \le \sum_{j=1}^M (x,v_j)^2 \limsup_{m\to\infty}\|P_{\mathcal{N}(P_mK)} v_j\|^2 + \varepsilon  = \varepsilon$$

and the claim follows with $\varepsilon\to 0$.
\end{proof}

We come to the main proof and split
\begin{align*}
&\E\left\lVert R_{\alpha(\delta_{m,n}^{est})}\m\bar{Y}_n\m-K^+\hat{y}\right\rVert^2\\
\le&\left\lVert K^+\hat{y}-R_{\alpha(\delta_{m,n}^{est})}\m P_m\hat{y}\right\rVert^2+\E\left\lVert R_{\alpha(\delta_{m,n}^{est})}\m P_m\hat{y}-R_{\alpha(\delta_{m,n}^{est})}\m\bar{Y}_n\m\right\rVert^2\\
\le& \left\lVert K^+\hat{y}-R_{\alpha(\delta_{m,n}^{est})}\m P_m\hat{y}\right\rVert^2+\left\lVert R_{\alpha(\delta_{m,n}^{est})}\m\right\rVert^2 \E \left\|\bar{Y}_n\m-P_m\hat{y}\right\|^2 
\end{align*}

 and because of independence

\begin{align*}
\E\left\lVert \bar{Y}_n\m-P_m\hat{y}\right\rVert^2 = \E \sum_{j=1}^m \left(\frac{1}{n}\sum_{i=1}^n\delta_{ij}\m\right)^2 = \frac{1}{n}\sum_{j=1}^m\E{\delta_{1j}\m}^2\le \frac{m}{n}C_d=\left(\delta_{m,n}^{est}\right)^2 C_d.
\end{align*}

Assumption \ref{assfilt} implies that

\begin{equation}\label{regunorm}
\|R_\alpha\| \le \sqrt{C_RC_F/\alpha},
\end{equation}

see e.g. \cite{engl1996regularization} or Proposition 1 of \cite{harrach2020beyond}. Therefore
 it follows that 

\begin{equation}\label{pr1eq1}
\left\lVert R_{\alpha(\delta_{m,n}^{est})}\m\right\rVert^2 \E {\delta_m^{meas}}^2 \le \left( \left\| R_{\alpha(\delta_{m,n}^{est})}\m \right\| \delta_{m,n}^{est}\right)^2C_d \le C_dC_RC_F \frac{{\delta_{m,n}^{est}}^2}{\alpha(\delta_{m,n}^{est})} \to0
\end{equation}

 for $m,n\to\infty, m/n \to 0$. Now

\begin{align*}
&\left\lVert K^+\hat{y}-R_{\alpha(\delta_{m,n}^{est})}\m P_m\hat{y}\right\rVert\\
\le& \left\lVert K^+\hat{y}-\left(P_mK\right)^+P_m\hat{y}\right\rVert + \left\lVert \left(P_mK\right)^+P_m\hat{y}-R_{\alpha(\delta_{m,n}^{est})}\m P_m\hat{y}\right\rVert\\
                                               = &\left\lVert K^+K\hat{x}-\left(P_mK\right)^+P_mK\hat{x}\right\rVert + \left\lVert \left(P_mK\right)^+P_m\hat{y}-R_{\alpha(\delta_{m,n}^{est})}\m P_m\hat{y}\right\rVert\\
                                               =& \left\lVert \hat{x}-P_{\mathcal{N}(P_mK)^\perp}\hat{x}\right\rVert + \left\lVert \left(P_mK\right)^+P_m\hat{y}-R_{\alpha(\delta_{m,n}^{est})}\m P_m\hat{y}\right\rVert
\end{align*}

and
 
 \begin{equation}\label{pr1eq2}
\lim_{m\to\infty} \|\hat{x} - P_{\mathcal{N}(P_mK)^\perp}\hat{x}\| = \lim_{m\to\infty}\|P_{\mathcal{N}(P_mK)}\hat{x}\|= 0
\end{equation} 
   by Proposition \ref{proofs:prop1}. Finally, for any $\varepsilon>0$ by Lemma \ref{proof:lem1}.2 there exists a $M\in\N$ such that $\sum_{j=M+1}^m\left(\hat{x},v_j\m\right)^2\le \varepsilon$ for $m$ large enough and therefore

\begin{align*}
&\left\lVert \left(P_mK\right)^+P_mK\hat{x} - R_{\alpha(\delta_{m,n}^{est})}\m P_mK\hat{x}\right\rVert^2\\
 = &\sum_{j=1}^m \left|1-F_{\alpha(\delta_{m,n}^{est})}({\sigma_j\m}^2){\sigma_j\m}^2\right|^2\left(\hat{x},v_j^{(m)}\right)^2\\
 \le &\sum_{j=1}^M \left|1-F_{\alpha(\delta_{m,n}^{est})}({\sigma_j\m}^2){\sigma_j\m}^2\right|^2\left(\hat{x},v_j^{(m)}\right)^2+\sum_{j=M+1}^m \left(\hat{x},v_j^{(m)}\right)^2\\
 \le &\|\hat{x}\|^2\sup_{j=1,...,M} \left|1-F_{\alpha(\delta_{m,n}^{est})}({\sigma_j\m}^2){\sigma_j\m}^2\right|^2 + \varepsilon.
\end{align*}

 By Lemma \ref{proof:lem1}.1, \eqref{assfilt:1} and since $\alpha(\delta_{m,n}^{est})\to 0$ for $m,n\to\infty, m/n\to 0$ 
\begin{equation*}
\sup_{j=1,...,M} \left|1-F_{\alpha(\delta_{m,n}^{est})}({\sigma_j\m}^2){\sigma_j\m}^2\right| \le \sup_{\frac{{\sigma_M^{(\infty)}}^2}{2}\le \lambda\le\|K\|^2}\left|1-F_{\alpha(\delta_{m,n}^{est})}(\lambda)\lambda)\right|\le \frac{\sqrt{\varepsilon}}{\|\hat{x}\|}
\end{equation*}

 for all  $m,n$ sufficiently large and $m/n$ sufficiently small. Thus with $\varepsilon\to 0$ it follows that 

\begin{equation*}
\lim_{\substack{m,n\to\infty\\ m/n\to 0}}\left\lVert \left(P_mK\right)^+P_mK\hat{x} - R_{\alpha(\delta_{m,n}^{est})}\m P_mK\hat{x}\right\rVert = 0,
\end{equation*}
 which concludes the proof together with \eqref{pr1eq1} and \eqref{pr1eq2}.

\subsubsection{Proof of Theorem \ref{th2}}

By the nature of white noise we cannot expect the error to concentrate along a certain direction, in contrast to \cite{harrach2020beyond}. However, the independence between the measurement channels implies that its amplitude is highly concentrated. First, the following Proposition affirms that we are estimating the variance correctly.

\begin{proposition}\label{sec:5prop1}
Assume that the error fulfills Assumption \ref{err:disc}. Then for the sample variance

 $$s_{m,n}^2 = \frac{1}{m} \sum_{j=1}^m \frac{1}{n-1} \sum_{i=1}^n\left( Y_{ij}\m - \frac{1}{n} \sum_{l=1}^m Y_{lj}\m\right)^2$$
 there holds

$$\lim_{m\to\infty}\mathbb{P}\left( \sup_{n\ge 2}\left|s_{m,n}^2 - \E {\delta_{11}\m}^2\right| \ge \varepsilon \E {\delta_{11}\m}^2\right) = 0$$

for all $\varepsilon>0$.

\end{proposition}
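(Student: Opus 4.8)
The plan is to exploit that, for fixed $m$, the map $n\mapsto s_{m,n}^2$ is a \emph{reverse martingale}, which collapses the supremum over all $n\ge 2$ to a single concentration estimate at $n=2$. First I would observe that the signal cancels under centring, $Y_{ij}\m-\frac{1}{n}\sum_l Y_{lj}\m = \delta_{ij}\m-\frac{1}{n}\sum_l\delta_{lj}\m$, so that $s_{m,n}^2$ depends only on the noise. Write $\sigma_m^2:=\E{\delta_{11}\m}^2$ for the common per-channel variance and $T_{j,n}:=\frac{1}{n-1}\sum_{i=1}^n\left(\delta_{ij}\m-\frac{1}{n}\sum_l\delta_{lj}\m\right)^2$ for the per-channel sample variance, so that $s_{m,n}^2=\frac{1}{m}\sum_{j=1}^m T_{j,n}$ and $\E[T_{j,n}]=\sigma_m^2$ for every $n\ge2$ by unbiasedness.

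Each $T_{j,n}$ is a U-statistic of degree two with kernel $h(x,y)=\frac{1}{2}(x-y)^2$, hence a reverse martingale in $n$ with respect to the decreasing exchangeable $\sigma$-fields $\mathcal{E}_n^{(j)}$ generated by the symmetric functions of $\delta_{1j}\m,\dots,\delta_{nj}\m$ together with $\delta_{n+1,j}\m,\delta_{n+2,j}\m,\dots$; that is, $\E[T_{j,n}\mid\mathcal{E}_{n+1}^{(j)}]=T_{j,n+1}$. Since the channels are independent, putting $\mathcal{E}_n:=\bigvee_{j=1}^m\mathcal{E}_n^{(j)}$ yields $\E[s_{m,n}^2\mid\mathcal{E}_{n+1}]=s_{m,n+1}^2$, so $X_n:=s_{m,n}^2-\sigma_m^2$ is a mean-zero reverse martingale converging to $0$. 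I would then apply Doob's inequality to the finite forward martingales $(X_N,X_{N-1},\dots,X_2)$ (whose terminal, largest-field element is $X_2$) and let $N\to\infty$. This bounds $\sup_{n\ge2}|X_n|$ through the single moment of $X_2$: the weak-type estimate $\lambda\,\PP\left(\sup_{n\ge2}|X_n|\ge\lambda\right)\le \E|X_2|$, and, when a $p$-th moment is available, $\E\left[\sup_{n\ge2}|X_n|^p\right]\le\left(\frac{p}{p-1}\right)^p\E|X_2|^p$.

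It then remains to show that $X_2=\frac{1}{m}\sum_{j=1}^m\left(\frac{1}{2}\left(\delta_{1j}\m-\delta_{2j}\m\right)^2-\sigma_m^2\right)$, an average of $m$ independent mean-zero terms, vanishes as $m\to\infty$. Under Assumption \ref{err:disc}.1 the summands are i.i.d.\ across $j$ with the $m$-independent finite mean $\sigma_m^2=\sigma^2$, so the $L^1$ law of large numbers (uniform integrability of sample means of i.i.d.\ integrable variables) gives $\E|X_2|\to0$, and the weak-type bound yields $\PP\left(\sup_{n\ge2}|X_n|\ge\varepsilon\sigma^2\right)\le \E|X_2|/(\varepsilon\sigma^2)\to0$. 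Under Assumption \ref{err:disc}.2 the relative moment bound gives $\E\left|\frac{1}{2}\left(\delta_{1j}\m-\delta_{2j}\m\right)^2-\sigma_m^2\right|^p\le C\sigma_m^{2p}$, whence the von Bahr--Esseen inequality ($1<p\le2$) produces $\E|X_2|^p\le 2C\,\sigma_m^{2p}\,m^{1-p}$; combined with the $L^p$ reverse-martingale inequality this gives $\PP\left(\sup_{n\ge2}|X_n|\ge\varepsilon\sigma_m^2\right)\le\left(\frac{p}{p-1}\right)^p 2C\,\varepsilon^{-p}m^{1-p}\to0$, uniformly in the (possibly $m$-dependent) law of the noise.

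The main obstacle is the rigorous justification of the reverse-martingale structure together with the maximal inequality in the stated form: one must verify the U-statistic reverse-martingale identity for the sample variance, check that independence across channels preserves it for the average $s_{m,n}^2$, and treat the two moment regimes separately, since in the borderline case $p\to1^+$ no variance of the squared errors is available and one must rely on von Bahr--Esseen rather than on an $L^2$ Hájek--Rényi estimate.
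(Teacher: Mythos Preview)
Your proof is correct and follows essentially the same route as the paper: the reverse-martingale structure of $s_{m,n}^2$ (which the paper asserts directly while you justify via the U-statistic identity) reduces the uniform bound to a single estimate at $n=2$ via Doob's maximal inequality, and the concentration of $X_2$ is then handled by the $L^1$ law of large numbers under Assumption~\ref{err:disc}.1 (the paper spells out the truncation argument) and by a moment inequality under Assumption~\ref{err:disc}.2 (you use von Bahr--Esseen, the paper Marcinkiewicz--Zygmund, to the same effect).
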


\begin{proof}
As a sum of $m$ reversed martingales $\left(s_{m,-n}^2-\E{\delta_{11}\m}^2\right)_{n\le -2}$ is a reversed martingale adapted to the filtration $$\mathcal{F}_{-n}=\sigma\left( \sum_{i=1}^n(\delta_{i1}\m - \overline{\delta_{i1}\m})^2, ..., \sum_{i=1}^n(\delta_{im}\m-\overline{\delta_{im}\m})^2\right), n\ge 2.$$ Under Assumption \ref{err:disc}.2, by the Kolmogorov-Doob-inequalities there holds

\begin{equation*}
\mathbb{P}\left( \sup_{n\ge 2} \left| s_{m,n}^2-\E{\delta_{11}\m}^2\right|\ge \varepsilon \E {\delta_{11}\m}^2\right) \le \frac{\E\left|s_{m,2}^2-\E {\delta_{11}\m}^2\right|^{p}}{\left(\varepsilon\E{\delta_{11}\m}^2\right)^p}.
\end{equation*}

By Marcinkiewicz-Zygmund inequality \cite{gut2013probability} there exists $C_p$ such that

\begin{align*}
\E\left|s_{m,2}^2-\E {\delta_{11}\m}^2\right|^{p} &=\E\left|\frac{1}{m}\sum_{j=1}^m\sum_{i=1}^2 (\del-\overline{\del})^2-\E {\delta_{11}\m}^2\right|^{p}\\
&\le \frac{C_p}{m^{p-1}}\E\left|\sum_{i=1}^2 (\delta_{i1}\m-\overline{\delta_{i1}}\m)^2-\E{\delta_{11}\m}^2\right|^{p}\\
&\le \frac{2^{p-1}(4^p+1)C_p}{m^{p-1}}\E|\delta_{11}\m|^{2p},
\end{align*}

so

\begin{equation*}
\mathbb{P}\left( \sup_{n\ge 2} \left| s_{m,n}^2-\E{\delta_{11}\m}^2\right|\ge \varepsilon \E {\delta_{11}\m}^2\right) \le \frac{\E\left|s_{m,2}^2-\E {\delta_{11}\m}^2\right|^{p}}{(\E{\delta_{11}\m}^2)^p} \le \frac{2^{p-1}(4^p+1)C_p C_d}{\varepsilon^pm^{p-1}}\to 0
\end{equation*}

as $m\to\infty$. Under Assumption \ref{err:disc}.1, by the Kolmogorov-Doob-inequality

\begin{equation*}
\mathbb{P}\left( \sup_{n\ge 2} \left| s_{m,n}^2-\E{\delta_{11}\m}^2\right|\ge \varepsilon \E {\delta_{11}\m}^2\right) \le \frac{\E\left|s_{m,2}^2-\E {\delta_{11}^{(1)}}^2\right|}{\varepsilon\E{\delta_{11}^{(1)}}^2}.
\end{equation*}

It holds that

$$s_{m,2}^2-\E{\delta_{11}^{(1)}}^2 = \frac{1}{m}\sum_{j=1}^m\sum_{i=1}^2 \left(\del-\overline{\del}\right)^2-\E {\delta_{11}^{(1)}}^2 =:\frac{1}{m}\sum_{j=1}^m X_{j}\m$$

with $X_{j}\m, j=1,...,m, m\in\N$ are i.i.d and $\E X_{j}\m=0, \E |X_{j}\m|<\infty$. To finish the proof we need to show that $\E |\sum_j X_m/m|\to 0$ as $m\to\infty$. Let $\varepsilon'>0$. By dominated convergence and integrability of $X_{j}\m$ there exists $M>0$ large enough such that for $Y_j\m:= X_j\m \chi_{\left\{|X_j\m| \le M\right\}}$ and $Z_j\m:=X_j\m \chi_{\left\{|X_j\m|> M \right\}}$ it holds that $ \E|Z_1^{(1)}| \le \varepsilon$.
 So, since $X_j\m$ are i.i.d.
\begin{align}\label{chapt2:prop:eq}
\E \left|\sum_{j=1}^m X_{j}\m \right| &\le \E \left|\sum_{j=1}^m Y_{j}\m-\E Y_{j}\m\right| + \E \left|\sum_{j=1}^m Z_{j}\m -\E Z_{j}\m\right|\\\notag
 &\le \sqrt{ \E\left|\sum_{j=1}^m Y_{j}\m-\E Y_{j}\m\right|^2} + \sum_{j=1}^m\E\left|Z_{j}\m-\E Z_{j}\m\right|\\\notag
 &\le \sqrt{ m\E\left| Y_{1}^{(1)}-\E Y_{1}^{(1)}\right|^2}+2m\E|Z_{1}^{(1)}|\le \sqrt{m2M\E|X_{1}^{(1)}|} + 2m\varepsilon,
\end{align}

thus $\E \left|\sum_{j=1}^m X_{j}\m/m\right| \le 3 \varepsilon$ for $m$ large enough.
\end{proof}

Now we need the following Lemma.

\begin{lemma}\label{estlem1}
Assume that the error model is accordingly to Assumption \ref{err:disc}. Then there holds

$$\lim_{m,n\to\infty}\mathbb{P}\left( \left|\frac{\left\| \bar{Y}_n\m-P_m\hat{y}\right\| - \delta_{m,n}^{est}}{\delta_{m,n}^{est}}\right| \ge \varepsilon\right) = 0.$$

\end{lemma}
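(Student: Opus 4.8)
The plan is to measure both the random error $\|\bar{Y}_n\m-P_m\hat{y}\|$ and the estimator $\delta_{m,n}^{est}=\sqrt{\frac{m}{n}s_{m,n}^2}$ against the common deterministic scale $\sqrt{\frac{m}{n}}\,\sigma_m$ with $\sigma_m^2:=\E{\delta_{11}\m}^2$, and to show that both ratios tend to $1$ in probability. Writing out $\|\bar{Y}_n\m-P_m\hat{y}\|^2=\sum_{j=1}^m\big(\frac1n\sum_{i=1}^n\delta_{ij}\m\big)^2$ and introducing the normalised variables $T_j:=\frac{1}{\sqrt{n}\,\sigma_m}\sum_{i=1}^n\delta_{ij}\m$, which are independent in $j$ with $\E T_j=0$ and $\E T_j^2=1$ (using independence and zero mean in $i$), a direct computation gives
\[
\frac{\|\bar{Y}_n\m-P_m\hat{y}\|^2}{(\delta_{m,n}^{est})^2}=\frac{\frac1m\sum_{j=1}^m T_j^2}{s_{m,n}^2/\sigma_m^2}.
\]
Proposition \ref{sec:5prop1} already yields $s_{m,n}^2/\sigma_m^2\to1$ in probability (and in particular $\PP(\delta_{m,n}^{est}=0)\to0$), so it remains to prove the numerator $\frac1m\sum_{j=1}^m T_j^2\to1$ in probability. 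Granting this, the ratio of the two squared quantities converges to $1$, and since $x\mapsto\sqrt{x}$ is continuous the continuous mapping theorem turns this into $\|\bar{Y}_n\m-P_m\hat{y}\|/\delta_{m,n}^{est}\to1$, which is the assertion.

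For the core step $\frac1m\sum_{j=1}^m T_j^2\to1$ I would distinguish the two cases of Assumption \ref{err:disc}. Under \ref{err:disc}.2 the uniform moment hypothesis permits a direct moment estimate: applying the Marcinkiewicz--Zygmund inequality to the inner sum together with the power-mean inequality gives $\E|T_j|^{2p}=(n^p\sigma_m^{2p})^{-1}\E\big|\sum_{i=1}^n\delta_{ij}\m\big|^{2p}\le C_{2p}\,\E|\delta_{1j}\m|^{2p}/\sigma_m^{2p}\le C_{2p}C_d$, uniformly in $m,n,j$. Hence $\E|T_j^2-1|^p$ is uniformly bounded, and a second application of Marcinkiewicz--Zygmund to the i.i.d.\ centred variables $T_j^2-1$ produces $\E\big|\frac1m\sum_{j=1}^m(T_j^2-1)\big|^p=\mathcal{O}(m^{-(p-1)})$ for $1<p\le2$ (respectively $\mathcal{O}(m^{-p/2})$ for $p>2$), which vanishes as $m\to\infty$; Markov's inequality then closes this case.

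Under \ref{err:disc}.1 only a finite second moment is available, so the moment route breaks down and this is the main obstacle. Here the law of $\delta_{ij}\m$ does not depend on $m$, so $\sigma_m^2\equiv\sigma^2$ and each $T_j$ has the law of $\frac{1}{\sqrt{n}\,\sigma}\sum_{i=1}^n\delta_{i1}$. By the central limit theorem $T_j\Rightarrow N(0,1)$ as $n\to\infty$, while $\E T_j^2=1=\E N(0,1)^2$ for every $n$; the standard criterion that convergence in law together with convergence of the second moments forces uniform integrability then shows that $\{T_j^2\}_n$ is uniformly integrable. I would exploit this via the same truncation scheme used at the end of the proof of Proposition \ref{sec:5prop1}: given $\vare'>0$ fix $K=K(\vare')$ with $\sup_n\E\big[T_j^2\chi_{\{T_j^2>K\}}\big]\le\vare'$, split $T_j^2=T_j^2\chi_{\{T_j^2\le K\}}+T_j^2\chi_{\{T_j^2>K\}}$, control the centred bounded part by Chebyshev (its variance is at most $K$, so its empirical mean deviates by less than $\vare$ with probability tending to $1$ as $m\to\infty$), and bound the centred tail part in $L^1$ by $2\E[T_j^2\chi_{\{T_j^2>K\}}]\le2\vare'$. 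Letting first $m,n\to\infty$ and then $\vare'\to0$ gives $\frac1m\sum_{j=1}^m T_j^2\to1$ in probability. The recurring subtlety is that $T_j$ depends on both $m$ and $n$, so every bound must be uniform in these parameters; this uniformity is exactly what the moment hypothesis supplies in case \ref{err:disc}.2 and what the uniform-integrability argument supplies in case \ref{err:disc}.1.
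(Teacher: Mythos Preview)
Your proposal is correct and follows essentially the same route as the paper: the same algebraic reduction to $\frac{1}{m}\sum_{j=1}^m T_j^2\to 1$ via Proposition~\ref{sec:5prop1}, the same double application of Marcinkiewicz--Zygmund in case~\ref{err:disc}.2, and the same CLT-plus-truncation scheme in case~\ref{err:disc}.1. The only stylistic difference is that you invoke the standard criterion ``weak convergence plus convergence of moments $\Rightarrow$ uniform integrability'' as a black box, whereas the paper re-derives the needed tail bound $\sup_n\E\big[T_j^2\chi_{\{|T_j^2-1|\ge M\}}\big]\le\varepsilon$ explicitly via Portmanteau's lemma applied to the bounded indicator and truncated-square functions; your shortcut is cleaner and perfectly legitimate.
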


\begin{proof}
It holds that

\begin{align*}
&\frac{\left\|\bar{Y}_n\m-P_m\hat{y}\right\| -\delta_{m,n}^{est}}{\delta_{m,n}^{est}}\\
  = &\sqrt{\frac{\E{\delta_{11}\m}^2}{s_{m,n}^2}}\left( \frac{\left\|\bar{Y}_n\m - P_m\hat{y}\right\| - \sqrt{m \E{\delta_{11}\m}^2/n}}{\sqrt{ m\E{\delta_{11}\m}^2/n}} + 1 - \sqrt{\frac{s_{m,n}^2}{\E{\delta_{11}\m}^2}}\right).
\end{align*}

Thus by Proposition \ref{sec:5prop1} it suffices to show that 

$$\lim_{m,n\to \infty}\mathbb{P}\left( \left| \frac{\left\| \bar{Y}_n\m - P_m\hat{y}\right\|^2 - \frac{m}{n}\E {\delta_{11}\m}^2 }{\frac{m}{n}\E{\delta_{11}\m}^2}\right| \ge \varepsilon\right) = 0.$$

Let us first assume that Assumption \ref{err:disc}.1 holds true. Then, by Markov's inequality

\begin{align*}
\mathbb{P}\left(\left|\frac{\left\|\bar{Y}_n\m - P_m\hat{y}\right\|^2 - \frac{m}{n}\E{\delta_{11}\m}^2}{\frac{m}{n}\E{\delta_{11}\m}^2}\right| \ge \varepsilon\right)&\le \frac{\E\left|\left\|\bar{Y}_n\m - P_m\hat{y}\right\|^2 - \frac{m}{n}\E{\delta_{11}\m}^2\right|}{\varepsilon\frac{m}{n}\E{\delta_{11}\m}^2}\\
 &= \frac{1}{m\varepsilon}\E\left|\sum_{j=1}^m\left(\left(\frac{\sum_{i=1}^n \del}{\sqrt{n \E{\delta_{11}\m}^2}}\right)^2 - 1\right)\right|.
\end{align*}

Now with

$$X_{jn}\m:=\left(\frac{\sum_{i=1}^n \del}{\sqrt{n \E{\delta_{11}\m}^2}}\right)^2 - 1$$

it holds that $(X_{jn}\m)_{j=1}, j=1,...,m, m\in\N$ are i.i.d and $\E X_{jn}\m = 0, \E | X_{jn}\m|=2<\infty$.  We proceed similarly as at the end of the proof of Proposition \ref{sec:5prop1} and show that

\begin{equation}\label{chap2:lem:eq0}
\E\left|\frac{1}{m}\sum_{j=1}^m X_{jn}\m\right| \to 0
\end{equation}

as $m\to\infty$ (uniformly in $n\in\N$), where we face an additional technical difficulty due to the dependence on $n$. Let $\varepsilon>0$ and $Z$ be a standard Gaussian (thus $\E Z^2=1$ in particular). Then for $M$ large enough it holds that

\begin{align}\label{chapt2:lem:eq1}
  \E\left[ \chi_{\left\{|Z^2-1|\ge M\right\}}\right] &\le \frac{\varepsilon}{4}\\
  \E\left[ Z^2  \chi_{\left\{ |Z^2-1|< M\right\}}\right] &\ge \E[Z^2] -   \frac{\varepsilon}{4} = 1-\frac{\varepsilon}{4}.
\end{align}

By the standard central limit theorem for real valued random variables it holds that

$$\frac{\sum_{i=1}^n \delta_{i1}^{(1)}}{\sqrt{n \E\left[ {\delta_{11}^{(1)}}^2\right]}} \to Z$$

weakly as $n\to\infty$. Since 

\begin{align*}
f_1:\R\to\R,~x&\mapsto \chi_{\left\{|x^2-1|\ge M\right\}},\\
f_2:\R\to\R,~x&\mapsto x^2\chi_{\left\{|x^2-1|<M\right\}}\\
\end{align*}

are bounded functions whose set of discontinuities has Lebesgue measure $0$ it holds that

$$\E\left[f_p\left(\frac{\sum_{i=1}^n \delta_{i1}^{(1)}}{\sqrt{n\E\left[{\delta_{11}^{(1)}}^2\right]}}\right)\right] \to \E\left[f_p(Z)\right]$$

as $n\to\infty$ for $p=1,2$ by Portmanteaus lemma (see e.g. \cite{klenke2013probability}). Thus by \eqref{chapt2:lem:eq1} there exists a $n^*$ such that

\begin{align*}
\E\left[f_1\left(\frac{\sum_{i=1}^n \delta_{i1}^{(1)}}{\sqrt{n\E\left[{\delta_{11}^{(1)}}^2\right]}}\right)\right] &\le  \E\left[f_1(Z)\right]+ \left|\E\left[f_1\left(\frac{\sum_{i=1}^n \delta_{i1}^{(1)}}{\sqrt{n\E\left[{\delta_{11}^{(1)}}^2\right]}}\right)-f_1(Z)\right]\right|\le \frac{\varepsilon}{2}\\
\E\left[f_2\left(\frac{\sum_{i=1}^n \delta_{i1}^{(1)}}{\sqrt{n\E\left[{\delta_{11}^{(1)}}^2\right]}}\right)\right] &\ge  \E\left[f_2(Z)\right]- \left|\E\left[f_2\left(\frac{\sum_{i=1}^n \delta_{i1}^{(1)}}{\sqrt{n\E\left[{\delta_{11}^{(1)}}^2\right]}}\right)-f_2(Z)\right]\right|\ge  1-\frac{\varepsilon}{2}
\end{align*}

for all $n\ge n^*$ and $p=1,2$. We again set $Y_{jn}\m:= X_{jn}\m \chi_{\left\{|X_{jn}\m| \le M\right\}}$ and $Z_{jn}\m:=X_{jn}\m \chi_{\left\{|X_{jn}\m|> M \right\}}$ and define

$$f_3:=\R\to\R,~x\mapsto x^2\chi_{\left\{|x^2-1|\ge M\right|}.$$

Then

\begin{align}\label{chapt2:lem:eq3}
\E| Z_{1n}^{(n)}| &\le \E\left[f_3\left( \frac{\sum_{i=1}^n \delta_{i1}^{(1)}}{\sqrt{n\E\left[{\delta_{11}^{(1)}}^2\right]}}\right)\right] + \E\left[f_1\left( \frac{\sum_{i=1}^n \delta_{i1}^{(1)}}{\sqrt{n\E\left[{\delta_{11}^{(1)}}^2\right]}}\right)\right]\\\notag
 &= \E\left[\left(\frac{\sum_{i=1}^n \delta_{i1}^{(1)}}{\sqrt{n\E\left[{\delta_{11}^{(1)}}^2\right]}}\right)^2\right] - \E\left[f_2\left( \frac{\sum_{i=1}^n \delta_{i1}^{(1)}}{\sqrt{n\E\left[{\delta_{11}^{(1)}}^2\right]}}\right)\right]\\\notag
 &\qquad + \E\left[f_1\left( \frac{\sum_{i=1}^n \delta_{i1}^{(1)}}{\sqrt{n\E\left[{\delta_{11}^{(1)}}^2\right]}}\right)\right]\\\notag
 &\le 1 -(1-\frac{\varepsilon}{2})+\frac{\varepsilon}{2} = \varepsilon,
\end{align}

for all $n\ge n^*$, where we used that $f_2(x)=f_3(x)=x^2$ in the second step. With the same argumentation as in \eqref{chapt2:prop:eq},

\begin{align*}
\E \left|\sum_{j=1}^m X_{jn}\m \right| &\le \E \left|\sum_{j=1}^m Y_{jn}\m-\E Y_{jn}\m\right| + \E \left|\sum_{j=1}^m Z_{jn}\m -\E Z_{jn}\m\right|\\
 &\le \sqrt{ \E\left|\sum_{j=1}^m Y_{jn}\m-\E Y_{jn}\m\right|^2} + \sum_{j=1}^m\E\left|Z_{jn}\m-\E Z_{jn}\m\right|\\
 &\le \sqrt{ m\E\left| Y_{1n}^{(1)}-\E Y_{1n}^{(1)}\right|^2}+2m\E|Z_{1n}^{(1)}|\le \sqrt{m2M\E|X_{1n}^{(1)}|} + 2m\varepsilon\\
 &\le \sqrt{4mM}+2m\varepsilon,
\end{align*}

for all $n\ge n^*$, where we used that $\E|X_{1n}^{(1)}|\le 2$ and \eqref{chapt2:lem:eq3} in the last step. Thus $\E|\sum_{j=1}^m X_{jm}\m/m| \le 3\varepsilon$ for $m,n$ large enough and sending $\varepsilon$ to $0$ proves the claim \eqref{chap2:lem:eq0}.  

Now assume that Assumption \ref{err:disc}.2 holds true. Then, by Markov's inequality

$$\mathbb{P}\left( \left| \frac{\left\| \bar{Y}_n\m - P_m\hat{y}\right\|^2 - \frac{m}{n}\E {\delta_{11}\m}^2 }{\frac{m}{n}\E{\delta_{11}\m}^2}\right| \ge \varepsilon\right) \le \frac{\E\left| \frac{n}{m\E{\delta_{11}\m}^2}\left\|\bar{Y}_n\m - P_m\hat{y}\right\|^2 -  1\right|^p}{\varepsilon^p}$$
and using further twice the Marcinkiewicz-Zygmund inequality one obtains

\begin{align*}
 &\E\left| \frac{n}{m\E{\delta_{11}\m}^2}\left\|\bar{Y}_n\m - P_m\hat{y}\right\|^2 -  1\right|^p\\
  = &\frac{1}{m^p}\E\left| \sum_{j=1}^m\left( \left( \sum_{i=1}^n \delta_{ij}\m/\sqrt{n\E{\delta_{11}\m}^2}\right)^2-1\right)\right|^p\\
  \le& \frac{B_p m^{\max(1,p/2)}}{m^{p}}\E\left|\left(\sum_{i=1}^n \delta_{i1}\m/\sqrt{n\E{\delta_{11}\m}^2}\right)^2-1\right|^p\\
\le& \frac{2^{p-1} B_p}{m^{\min(p-1,p/2)}}\left(\E\left|\sum_{i=1}^n \delta_{i1}\m/\sqrt{n\E{\delta_{11}\m}^2}\right|^{2p} + 1^p\right)\\
 \le& \frac{2^{p-1} B_p}{m^{\min(p-1,p/2)}}\left(B_{2p}\E\left|\delta_{11}\m\right|^{2p}/\left(\E{\delta_{11}\m}^2\right)^p + 1\right)\le\frac{C}{m^{\min(p-1,p/2)}}\to 0
\end{align*}

as $m\to\infty$, where we have used independence and $\E \left(\sum_{i=1}^n \delta_{ij}\m/\sqrt{n \E {\delta_{11}\m}^2}\right)^2 =1$ in the second step.

\end{proof}

Before we will start with the main proof we need one last proposition.

\begin{proposition}\label{proof:prop1}
For all $\varepsilon>0$, there exist $m_0\in\N$ and $\alpha_0>0$ such that

$$\lim_{m\to\infty} \left\|P_mK R_{\alpha}\m P_mK\hat{x} - P_mK\hat{x}\right\|/\sqrt{\alpha} \le \varepsilon$$

for all $m\ge m_0$ and $\alpha\le \alpha_0$.

\end{proposition}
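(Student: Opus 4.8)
The plan is to bound the bias term $\|P_mKR_\alpha\m P_mK\hat x - P_mK\hat x\|$ uniformly in $m$ by exploiting the spectral decomposition of $P_mK$ together with the convergence of eigenvalues and eigenprojections established in Lemma \ref{proof:lem1}. The quantity to estimate is
\begin{equation*}
\left\|P_mKR_\alpha\m P_mK\hat x - P_mK\hat x\right\|^2 = \sum_{j=1}^m {\sigma_j\m}^2\left|1-F_\alpha({\sigma_j\m}^2){\sigma_j\m}^2\right|^2\left(\hat x,v_j\m\right)^2,
\end{equation*}
where $(\sigma_j\m,v_j\m)$ are the singular values and right singular vectors of $P_mK$. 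The key is that $\sqrt{\alpha}$-scaling matches the qualification bound in Assumption \ref{assfilt} with $\nu=1$: since $\nu_0\ge 1$ (in the context where this proposition is applied for Theorem \ref{th2} we have $\nu_0>1$), we have $\sup_{\lambda}\lambda^{1/2}|1-F_\alpha(\lambda)\lambda|\le C_1\alpha^{1/2}$, so that for every single summand $\sigma_j\m|1-F_\alpha({\sigma_j\m}^2){\sigma_j\m}^2|\le C_1\sqrt{\alpha}$.

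First I would split the sum at a cutoff index $M$ chosen via Lemma \ref{proof:lem1}.2, so that $\limsup_{m\to\infty}\sum_{j=M+1}^m(\hat x,v_j\m)^2\le \varepsilon^2/(2C_1^2)$ for a prescribed $\varepsilon>0$; here $M=M(\hat x,\varepsilon)$ depends only on $\hat x$ and $\varepsilon$, not on $m$ or $\alpha$. For the tail $j>M$, I apply the qualification bound termwise with $\nu=1$, writing
\begin{equation*}
\sum_{j=M+1}^m {\sigma_j\m}^2\left|1-F_\alpha({\sigma_j\m}^2){\sigma_j\m}^2\right|^2\left(\hat x,v_j\m\right)^2 \le C_1^2\,\alpha \sum_{j=M+1}^m\left(\hat x,v_j\m\right)^2,
\end{equation*}
which after dividing by $\alpha$ is bounded by $C_1^2\sum_{j=M+1}^m(\hat x,v_j\m)^2$, hence $\limsup_m$ of this tail contribution to the squared quantity over $\alpha$ is at most $\varepsilon^2/2$, uniformly in $\alpha$.

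For the head $j\le M$, I use instead the uniform bound $\lambda|F_\alpha(\lambda)|\le C_R$ from Assumption \ref{assfilt}, which gives $|1-F_\alpha({\sigma_j\m}^2){\sigma_j\m}^2|\le 1+C_R$, so the head contributes at most $(1+C_R)^2\sum_{j=1}^M{\sigma_j\m}^2(\hat x,v_j\m)^2$ to the squared quantity. Since $\sigma_j\m\to\sqrt{\lambda_j^{(\infty)}}$ by Lemma \ref{proof:lem1}.1, the finitely many singular values $\sigma_1\m,\dots,\sigma_M\m$ stay bounded, so this head term is bounded by a constant $B=B(\hat x,M)$ independent of $\alpha$; dividing by $\alpha$ this is $B/\alpha$, which does not vanish as $\alpha\to 0$ at fixed head. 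The fix is to observe that the head eigenvalues $\lambda_j^{(\infty)}$ are strictly positive and bounded below by $\lambda_M^{(\infty)}>0$, so once $\alpha$ is small enough the qualification estimate is not even needed: by \eqref{assfilt:1}, $\sup_{j\le M}|1-F_\alpha({\sigma_j\m}^2){\sigma_j\m}^2|\to 0$ as $\alpha\to 0$ (uniformly once $m$ is large, using $\lambda_M^{(\infty)}/2\le{\sigma_j\m}^2$), so the head term over $\alpha$ can be made smaller than $\varepsilon^2/2$ for all $\alpha\le\alpha_0$ and $m\ge m_0$.

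The main obstacle is the uniformity of the two limits: the cutoff $M$ must be fixed first (depending only on $\hat x,\varepsilon$), and then $m_0$ and $\alpha_0$ chosen afterwards so that both the eigenvalue convergence $\sigma_j\m\to\sqrt{\lambda_j^{(\infty)}}$ (for the head, via \eqref{assfilt:1}) and the tail summability bound (via Lemma \ref{proof:lem1}.2) hold simultaneously. Care is needed because the singular system $(v_j\m)$ depends on $m$, so one cannot fix the eigenvectors and let $\alpha\to 0$ independently; the order of quantifiers is $\varepsilon\rightsquigarrow M\rightsquigarrow(m_0,\alpha_0)$, exactly as in the tail estimate inside the proof of Theorem \ref{th1}. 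Combining the head and tail bounds gives $\limsup_{m\to\infty}\|P_mKR_\alpha\m P_mK\hat x - P_mK\hat x\|^2/\alpha\le\varepsilon^2$ for all $\alpha\le\alpha_0$, and taking square roots together with $\varepsilon\to 0$ yields the claim.
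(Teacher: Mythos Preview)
Your tail argument (qualification with $\nu=1$ combined with Lemma \ref{proof:lem1}.2) is correct and matches the paper. The gap is in the head. You invoke \eqref{assfilt:1} to get
\[
\sup_{j\le M}\bigl|1-F_\alpha({\sigma_j\m}^2){\sigma_j\m}^2\bigr|\to 0\quad(\alpha\to 0),
\]
but this carries no rate in $\alpha$. The head contribution to the \emph{squared} quantity \emph{divided by} $\alpha$ is
\[
\frac{1}{\alpha}\sum_{j=1}^M{\sigma_j\m}^2\bigl|1-F_\alpha({\sigma_j\m}^2){\sigma_j\m}^2\bigr|^2(\hat x,v_j\m)^2,
\]
and to make this $\le\varepsilon^2/2$ you would need $|1-F_\alpha({\sigma_j\m}^2){\sigma_j\m}^2|=o(\sqrt{\alpha})$, which \eqref{assfilt:1} alone does not provide. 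In fact, with only qualification $\nu_0=1$ the whole expression is merely bounded by $C_1^2\|\hat x\|^2$ and cannot be pushed below an arbitrary $\varepsilon$; this is why the proposition genuinely needs $\nu_0>1$, which you noted is assumed in Theorem \ref{th2} but never exploited.

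The paper's proof uses the higher qualification $\nu_0>1$ precisely on the head: it bounds
\[
\sum_{j=1}^M\frac{{\sigma_j\m}^2}{\alpha}\bigl|1-F_\alpha({\sigma_j\m}^2){\sigma_j\m}^2\bigr|^2(\hat x,v_j\m)^2
\le C_{\nu_0}^2\,\alpha^{\nu_0-1}\,\|\hat x\|^2\sum_{j=1}^M{\sigma_j\m}^{2(1-\nu_0)},
\]
and since ${\sigma_j\m}^2\ge\lambda_M^{(\infty)}/2$ for $m$ large (Lemma \ref{proof:lem1}.1) and $\nu_0-1>0$, this is at most a constant times $\alpha^{\nu_0-1}\to 0$. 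Replacing your \eqref{assfilt:1} step with this qualification-$\nu_0$ estimate closes the gap and yields the claim.
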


\begin{proof}
Lemma \ref{proof:lem1}.2 guarantees the existence of $M\in\N$ such that $$C_1^2 \sum_{j=M+1}^m(\hat{x},v_j\m)^2\le \varepsilon/2$$ for $m$ sufficiently large. Then

\begin{align*}
& \left\lVert (P_mKR\m_{\alpha}-Id)P_mK\hat{x}\right\rVert^2/\alpha = \sum_{j=1}^m  \left(F_{\alpha}({\sigma_j\m}^2){\sigma_j\m}^2-1\right)^2 \frac{{\sigma_j\m}^2}{\alpha}(\hat{x},v_j\m)^2\\
  \le &\left(\sup_{\lambda>0} \lambda^\frac{\nu_0}{2}|F_{\alpha}(\lambda)\lambda-1|\right)^2\lVert \hat{x}\rVert^2\sum_{j=1}^M \frac{{\sigma_j\m}^{2(1-\nu_0)}}{\alpha}\\
  &\qquad + \left(\sup_{\lambda>0} \lambda^\frac{1}{2}|F_{\alpha}(\lambda)\lambda-1|\right)^2\frac{\sum_{j=M+1}^m (\hat{x},{v_j\m})^2}{\alpha}\\
  \le &C_{\nu_0}^2M {\sigma_M\m}^{2(1-\nu_0)}\lVert \hat{x}\rVert^2\alpha^{\nu_0-1}+C_1^2 \sum_{l=M+1}^m (\hat{x},v_j\m)^2\\
  \le &2C_{\nu_0}^2M {\sigma_M^{(\infty)}}^{2(1-\nu_0)}\lVert \hat{x}\rVert^2\alpha^{\nu_0-1}+\varepsilon/2 \le \varepsilon
\end{align*}

 for $m$ sufficiently large and $\alpha$ sufficiently small, where we have used that the qualification of $\left(F_{\alpha}\right)_{\alpha>0}$ is bigger than one in the third and Lemma \ref{proof:lem1}.1 in the fourth step.
\end{proof}

We start with the main proof. We define

$$\Omega_{m,n}:=\left\{ \left\| \bar{Y}_n\m-P_m\hat{y}\right\|\le \frac{\tau+C_0}{2C_0}\delta_{m,n}^{est}~,~\delta_{m,n}^{est} \le c \varepsilon\right\},$$

with $c \le \frac{1}{2}\max\left\{ \frac{C_0+3\tau}{{\sigma_M^{(\infty)}}^2},\frac{(\tau+C_0)) \sqrt{C_RC_F}}{\sqrt{\varepsilon'}}\right\}^{-1}$, where $\varepsilon'$ is given below.

By Proposition \ref{proofs:prop1},

$$\left\|(P_mK)^+P_m\hat{y}-K^+\hat{y}\right\| = \left\|P_{\mathcal{N}(P_mK)}\hat{x}\right\|\le \varepsilon$$

for $m$ large enough and by Lemma \ref{proof:lem1}.2

\begin{align*}
&\left\|R_{\alpha_{m,n}}\m P_m\hat{y}-K^+\hat{y}\right\|^2\\
\le &\sum_{j=1}^M\left|F_{\alpha_{m,n}}({\sigma_j\m}^2){\sigma_j\m}^2 - 1\right|^2(\hat{x},v_j\m)^2 + \sum_{j=M+1}^m (\hat{x},v_j\m)^2\\
 \le &\frac{1}{{\sigma_M\m}^2}\sum_{j=1}^M\left|F_{\alpha_{m,n}}({\sigma_j\m}^2){\sigma_j\m}^2 - 1 \right|^2 {\sigma_j\m}^2(\hat{x},v_j\m)^2 + \varepsilon/2\\
 =&\frac{1}{{\sigma_M\m}^2}\left\|(P_mKR_{\alpha_{m,n}}\m-Id)P_m\hat{y}\right\|+\varepsilon/2\\
 \le&\frac{1}{{\sigma_M\m}^2}\left(\left\|(P_mKR_{\alpha_{m,n}}\m-Id)\bar{Y}_n\m\right\|+ \left\|(P_mKR_{\alpha_{m,n}}\m-Id)(P_m\hat{y}-\bar{Y}_n\m)\right\|\right)+\varepsilon/2
\end{align*}

for $m$ sufficiently large. So Lemma \ref{proof:lem1}.1 and the defining relation of the discrepancy principle and of $\Omega_{m,n}$ ensure that

\begin{align*}
\left\|R_{\alpha_{m,n}}\m P_m\hat{y}-K^+\hat{y}\right\|\chi_{\Omega_{m,n}} &\le \frac{2}{{\sigma_M^{(\infty)}}^2}\left(\tau \delta_{m,n}^{est}+ C_0\frac{\tau+C_0}{2C_0}\delta_{m,n}^{est} \right) \chi_{\Omega_{m,n}}+\varepsilon/2 \le \varepsilon
\end{align*}

for $m$ sufficiently large. Moreover,

\begin{align*}
&\tau \delta_{m,n}^{est} \chi_{\Omega_{m,n}}\\
\le &\left\|(P_mKR_{\alpha_{m,n}/q}\m - Id)\bar{Y}_n\m\right\|\chi_{\Omega_{m,n}}\\
 \le &\left\|(P_mK R_{\alpha_{m,n}/q}\m-Id)P_m\hat{y}\right\| + \left\|(P_mKR_{\alpha_{m,n}/q}\m-Id)(\bar{Y}_n\m - P_m\hat{y})\right\|\chi_{\Omega_{m,n}}\\
 \le &\left\|(P_mK R_{\alpha_{m,n}/q}\m-Id)P_m\hat{y}\right\| +C_0\frac{\tau+C_0}{2C_0}\delta_{m,n}^{est}\chi_{\Omega_{m,n}},\\
 \Longrightarrow & \delta_{m,n}^{est} \chi_{\Omega_{m,n}} \le \frac{2}{\tau-C_0}\left\|(P_mKR_{\alpha_{m,n}/q}\m-Id)P_m\hat{y}\right\|
\end{align*}

 Proposition \ref{proof:prop1} guarantees the existence of $\varepsilon'$ such that for $m$ large enough
\begin{align*}
\left\|P_mKR_{\alpha}\m P_m\hat{y}-P_m\hat{y}\right\|/\sqrt{\alpha} &\le \frac{(\tau-C_0)qC_0}{(\tau+C_0)\sqrt{C_RC_F}}\frac{\varepsilon}{2}
\end{align*}

for all $\alpha\le  \varepsilon'/q$. So with \eqref{regunorm}

\begin{align*}
&\|R_{\alpha_{m,n}}\m(\bar{Y}_n\m-P_m\hat{y})\|\chi_{\Omega_{m,n}}\\
&\le \|R_{\alpha_{m,n}}\|\|\bar{Y}_n\m-P_m\hat{y}\| \chi_{\Omega_{m,n}} \le \sqrt{\frac{C_RC_R}{\alpha_{m,n}}} \frac{\tau+C_0}{2C_0} \delta_{m,n}^{est}\chi_{\Omega_{m,n}}\\
\le & \frac{(\tau+C_0)\sqrt{C_RC_F}}{2C_0}\left(\frac{\delta_{m,n}^{est}}{\sqrt{\alpha_{m,n}}}\chi_{\Omega_{m,n}\cap \{\alpha_{m,n}\le \varepsilon'\}} +  \frac{\delta_{m,n}^{est}}{\sqrt{\alpha_{m,n}}}\chi_{\Omega_{m,n}\cap\{\alpha_{m,n}\ge \varepsilon'\}}\right)\\
\le & \frac{(\tau+C_0)\sqrt{C_RC_F}}{2C_0}\left(\frac{2}{(\tau-C_0)q}\frac{\left\|(P_mKR_{\frac{\alpha_{m,n}}{q}}-Id)P_m\hat{y}\right\|}{\sqrt{\alpha_{m,n}/q}}\chi_{\{\alpha_{m,n} \le \varepsilon'\}} +\frac{\delta_{m,n}^{est}}{\sqrt{\varepsilon'}}\chi_{\Omega_{m,n}}\right)\\
\le & \frac{(\tau+C_0)\sqrt{C_RC_F}}{2C_0}\left(\frac{2}{(\tau-C_0)q} \frac{(\tau-C_0)qC_0}{(\tau+C_0)\sqrt{C_RC_F}} \frac{\varepsilon}{2}+ \frac{c\varepsilon}{\sqrt{\varepsilon'}}\right)\le  \varepsilon/2 + \varepsilon/2
\end{align*}

for $m$ large enough. Putting it all together yields

\begin{align*}
&\left\|R_{\alpha_{m,n}}^{(m)}\bar{Y}_n\m-K^+\hat{y}\right\|\chi_{\Omega_{m,n}}\\
 \le &\left\|R_{\alpha_{m,n}}^{(m)}(\bar{Y}_n\m-P_m\hat{y})\right\|\chi_{\Omega_{m,n}} + \left\|R_{\alpha_{m,n}}^{(m)}P_m\hat{y} - (P_mK)^+P_m\hat{y}\right\|\chi_{\Omega_{m,n}}\\
 &\qquad+\left\|(P_mK)^+P_m\hat{y} - K^+\hat{y}\right\|\chi_{\Omega_{m,n}}\\
 \le & 3\varepsilon
 \end{align*}

for $m$ sufficiently large, which together with $\lim_{\substack{m,n\to\infty\\m/n\to 0}}\mathbb{P}\left(\Omega_{m,n}\right)= 1$ finishes the proof.

\subsection{Proofs for infinite-dimensional residuum}

For the second approach (with infinite-dimensional residuum) we need to guarantee stable inversion of the discretisation operator $P_m$. Afterwards we will show strong concentration of the back projected measurements in $\mathcal{Y}$ in order to use classical results from deterministic regularisation theory.

\subsubsection{Proof of Proposition \ref{idr:angle}}
It holds that $\kappa(P_m) = \kappa(P_m|_{\mathcal{N}(P_m)^\perp})$. We again denote by $A_m\in\R^{m\times m}$ the matrix representing $P_m: \mathcal{N}(P_m)^\perp\to \R^m$ with respect to the bases $(\eta_j\m)_{j=1,...,m}\subset {\mathcal{N}(P_m)^\perp}$ and \newline $(e_j)_{j=1,...,m}\subset \R^m$ where the latter is the canonical basis of $\R^m$. Thus

\begin{equation*}
(A_m)_{ij} = \left(P_m \eta_i\m, e_j\right)_{\R^m} = l_j\m(\eta_i\m)=(\eta_j\m,\eta_i\m)_\mathcal{Y}.
\end{equation*}

By assumption, we have that

\begin{align*}
\left\| \frac{A_m}{\|\eta_1\m\|^2} - I_m \right\| &\le \sqrt{\left\| \frac{A_m}{\|\eta_1\m\|^2} - I_m \right\|_1 \left\| \frac{A_m}{\|\eta_1\m\|^2} - I_m \right\|_\infty}\\
& = \max_{j=1,...,m} \sum_{i\neq j} \frac{|(\eta_j\m,\eta_i\m)|}{\|\eta_1\m\|^2} =: c<1
\end{align*}

where $I_m\in\R^{m\times m}$ is the identity and $\| . \|, \| . \|_1, \| .\|_\infty$ are the spectral and the maximum absolute column or row norm. 
So by (2.3) in \cite{rump2011verified} it holds that

\begin{equation}
1-c \le \sigma_j\left(\frac{A_m}{\|\eta_1\m\|^2}\right) \le 1+ c,
\end{equation}

for $j=1,...,m$, where $\sigma_1(A),...,\sigma_m(A)$ denote the singular values of $A\in\R^{m\times m}$. This proves the proposition.

\subsubsection{Proof of Proposition \ref{idr:intr}}
The bounds $c_m,C_m$ follow directly from Proposition \ref{idr:angle}. It remains to show that $\|\hat{y}-P_m^+P_m\hat{y}\| \to 0$ as $m\to\infty$. It holds that $\mathcal{N}(P_1)\supseteq \mathcal{N}(P_2) \supseteq ...$. In particular, there exists an orthonormal basis $(w_{i})_{i\in\N}$ such that $\mathcal{N}(P_m) = span( w_{m+1},w_{m+2},...)$. Thus, $ \delta_m^{disc} = \| P_{\mathcal{N}(P_m)} y \|= \sqrt{ \sum_{j=m+1}^\infty (y,w_{j})^2} \to 0$ as $m\to\infty$. 

\subsubsection{Proof of Proposition \ref{idr:svd}}

The bound for the discretisation error follows from

\begin{align*}
\lVert \hat{y} - {P_m}^+P_m\hat{y}\rVert^2 &= \sum_{j>m} (\hat{y},u_j)^2 =  \sum_{j>m} \sigma_j^{2+2\nu} (w_,v_j)^2\le \sigma_{m+1}^{2(1+\nu)} \|w\|^2.
\end{align*}

Since $(v_j)_{j\in\N}$ is an orthonormal basis the claim follows with Proposition \ref{idr:angle}.

\subsubsection{Proof of Proposition \ref{idr:box}}
The choice $c_m=C_m=1$ follows from Proposition \ref{idr:angle} since\newline $(\eta_j\m)_{j=1,...,m}$ are orthonormal for all $m\in\N$. Denote by $y_m =\sum_{j=1}^m \hat{y}((j-1)/m)\chi_{(\frac{j-1}{m},\frac{j}{m})} \in \mathcal{R}(P_m^*)=\mathcal{N}(P_m)^\perp$ the piecewise constant interpolating spline of the continuously differentiable function $\hat{y}$. Then there holds

\begin{align*}
\|\hat{y}-P_m^+P_m \hat{y} \| &= \|\hat{y}-P_{\mathcal{N}(P_m)^\perp} \hat{y}\| \le \| \hat{y} - y_m\| \le \sqrt{ \int_0^1(\hat{y}(t)-y_m(t))^2dt}\\
& = \sqrt{\sum_{j=1}^m \int_{\frac{j-1}{m}}^\frac{j}{m}\left(\hat{y}(t)-\hat{y}\left((\frac{j-1}{m}\right)\right)^2 dt}\\
&= \sqrt{ \sum_{j=1}^m \int_{\frac{j-1}{m}}^\frac{j}{m} y'(\xi_t)\left(t-\frac{j-1}{m}\right)^2dt} \le     \frac{\sup_{t'\in(0,1)}|\hat{y}'(t')|}{m},
\end{align*}

with $\xi_t\in[\frac{j-1}{m},\frac{j}{m})$.

\subsubsection{Proof of Proposition \ref{idr:hat}}
It holds that

\begin{equation*}
(\eta_j\m,\eta_i\m) = \begin{cases} 2/3 & ,i=j\\
                                    1/3 & , |i-j|=1, \min(i,j)=1 \mbox{ or } \max(i,j)=m\\
                                    1/6 & , |i-j|=1, \min(i,j)>1 \mbox{ and } \max(i,j)<m\\
                                    0 & , else \end{cases}
\end{equation*}

Therefore

\begin{equation*}
\sup_{m\in\N} \max_{j \le m } \frac{\sum_{j\neq i}|(\eta_j\m,\eta_i\m)|}{\|\eta_1\m\|^2} = \frac{1/2}{2/3} = \frac{3}{4}
\end{equation*}

so that the bounds $c_m,C_m$ follow with Proposition \ref{idr:angle}. Let $y_m\in\mathcal{N}(P_m)^\perp$ be the interpolating spline of continuously differentiable $\hat{y}$. By the mean value theorem there exist $\xi_t,\zeta_t\in[\frac{j-1}{m-1},\frac{j}{m-1})$ such that

\begin{align*}
&\hat{y}(t) - y_m(t)\\
 = &\hat{y}\left(\frac{j-1}{m-1}\right) + \hat{y}'(\xi_t)\left(t-\frac{j-1}{m-1}\right)\\
 &\qquad  - \left(\hat{y}\left(\frac{j-1}{m-1}\right) + \left((\hat{y}\left(\frac{j}{m-1}\right)-\hat{y}\left(\frac{j-1}{m-1}\right)\right)\left((m-1)t-(j-1)\right)\right)\\
 =&(y'(\xi_t)-y'(\zeta_t)\left(t-\frac{j-1}{m-1}\right)
\end{align*}

for $t\in[\frac{j-1}{m-1},\frac{j}{m-1})$. Thus

\begin{align*}
\|\hat{y}-P_m^+P_m\hat{y}\| & \le \|\hat{y}-y_m\| \le \sqrt{\sum_{j=1}^m \int_{\frac{j-1}{m-1}}^\frac{j}{m-1} \left(\hat{y}'(\xi_t)-\hat{y}'(\zeta_t)\right)^2\left(t-\frac{j-1}{m-1}\right)^2dt}\\
&\le \frac{2\sqrt{m}\sup_{t\in(0,1)}|\hat{y}'(t)|}{(m-1)^{3/2}}\le \frac{2^{5/2}\sup_{t'\in(0,1)}|\hat{y}'(t')|}{m}
\end{align*}

If $\hat{y}$ is twice continuously differentiable, then there are $\xi_t',\zeta_t'\in(\frac{j-1}{m-1},\frac{j}{m-1}]$ such that

\begin{align*}
  |\hat{y}'(\xi_t)-\hat{y}'(\zeta_t)| &= \left|\hat{y}''(\xi'_t)\left(\xi_t-\frac{j-1}{m-1}\right) - \hat{y}''(\zeta_t')\left(\zeta_t-\frac{j-1}{m-1}\right)\right|\\
  &\le \frac{2\sup_{t'\in(0,1)}|\hat{y}''(t')|}{m-1}
\end{align*}

for $t\in[\frac{j-1}{m-1},\frac{j}{m-1})$ so that

\begin{align*}
\|\hat{y} - P_m^+P_m\hat{y}\| &\le \|\hat{y}-y_m\|\le \sqrt{\sum_{j=1}^m \int_{\frac{j-1}{m-1}}^\frac{j}{m-1}\left(\frac{2\sup_{t'\in(0,1)}|\hat{y}''(t')|}{m-1}\right)^2\left(t-\frac{j-1}{m-1}\right)^2dt}\\
 &\le \frac{2\sqrt{m}\sup_{t'\in(0,1)}|\hat{y}''(t')|}{(m-1)^{5/2}}\le \frac{2^{7/2}\sup_{t'\in(0,1)}|\hat{y}''(t')|}{m^2}.
\end{align*}

\subsubsection{Proof of Theorem \ref{th1b}}
We use the bias-variance decomposition

\begin{align*}
&\E\left\|R_{\alpha(\delta_m^{disc})}P_m^+\bar{Y}_{n(m,\delta_m^{disc})}\m-K^+\hat{y}\right\|^2\\
 =&\E\left\|R_{\alpha(\delta_m^{disc})}P_m^+(\bar{Y}_{n(m,\delta_m^{disc})}\m-P_m\hat{y})\right\|^2+\left\|R_{\alpha(\delta_m^{disc})}P_m^+P_m\hat{y}-K^+\hat{y}\right\|^2\\
 \le&\E\left\|R_{\alpha(\delta_m^{disc})}P_m^+(\bar{Y}_{n(m,\delta_m^{disc})}\m-P_m\hat{y})\right\|^2+2\left\|R_{\alpha(\delta_m^{disc})}P_m^+P_m\hat{y}-R_{\alpha(\delta_m^{disc})}\hat{y}\right\|^2\\
 &\qquad+2\left\|R_{\alpha(\delta_m^{disc})}\hat{y}-K^+\hat{y}\right\|^2\\
 \le &\left\|R_{\alpha(\delta_m^{disc})}\right\|^2\left(\|P_m^+\|^2\E \left\|P_m\hat{y}-\bar{Y}_{n(m,\delta_m^{disc})}\m\right\|^2 + 2\left\|P_m^+P_m\hat{y}-\hat{y}\right\|^2\right)\\
 &\qquad+2\left\|R_{\alpha(\delta_m^{disc})}\hat{y}-K^+\hat{y}\right\|^2\\
 \le & \frac{C_RC_F}{\alpha(\delta_m^{disc})}\left( \frac{\E{\delta_{11}\m}^2m}{c_m^2 n(m,\delta_m^{disc})}+2 {\delta_m^{disc}}^2\right) + 2\left\|R_{\alpha(\delta_m^{disc})}\hat{y}-K^+\hat{y}\right\|^2\\
 \le & \left(C_RC_F(C_d+2)\right)\frac{{\delta_m^{disc}}^2}{\alpha(\delta_m^{disc})}  + 2\left\|R_{\alpha(\delta_m^{disc})}\hat{y}-K^+\hat{y}\right\|^2\to 0
\end{align*}

as $m\to\infty$.

\subsubsection{Proof of Theorem \ref{th4}}

The proof of Theorem \ref{th4} is more technical than the one of Theorem \ref{th2} due to correlations coming from the back projecting of the measurements and the data-dependent determination of the stopping index $n(m,\delta_m^{disc})$. However, under slightly stronger conditions we obtain a similar concentration property of the measurement error.

\begin{lemma}\label{estlem2}
Assume that the discretisation fulfills Assumption \ref{disc:fdr}  and the error is accordingly to Assumption \ref{err:disc} with $p\ge 2$ in the case of Assumption \ref{err:disc}.2. For $m\in\N, \delta_0,\delta>0$ and the sample variance 

$$s_{m,n}^2:=\frac{1}{m}\sum_{j=1}^m \frac{1}{n-1}\sum_{i=1}^n\lk Y_{ij}\m - \frac{1}{n}\sum_{l=1}^n Y_{lj}\m\rk^2,$$

consider the (random) choice

$$n(m,\delta)= \min\left\{ n'\ge 1: \frac{ms_m^2(n')}{c_m^2n'}\le \delta^2\right\}$$
 with $\sigma_1\m,...,\sigma_m\m$ the singular values of $P_m$. Then for any $\varepsilon>0$ there holds

$$\lim_{m\to\infty}\sup_{0<\delta\le \delta_0}\mathbb{P}\left(\left|\frac{\left\| P_m^+\bar{Y}_{n(m,\delta)}\m-P_m^+P_m\hat{y}\right\| - \delta_m}{\delta_m}\right| \ge \varepsilon\right)=0$$

 with $\bar{Y}_{n(m,\delta)}\m=\frac{1}{n(m,\delta)}\sum_{i=1}^{n(m,\delta)}\begin{pmatrix} Y_{i1}\m & ... & Y_{im}\m\end{pmatrix}^T$ and $\delta_m:= \delta \sqrt{\sum_{j=1}^m\frac{c_m^2}{m{\sigma_j^{m}}^2}}$. 

\end{lemma}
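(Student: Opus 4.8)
The plan is to reduce the claim to the concentration of the normalised squared error and to tame the data-driven index via the martingale structure in the number $n$ of repetitions. Fix $m$ and abbreviate $B:=(P_mP_m^*)^{-1}$, whose eigenvalues are ${\sigma_j\m}^{-2}$, $j=1,\dots,m$, and let $M_n:=\sum_{i=1}^n(\delta_{i1}\m,\dots,\delta_{im}\m)^T\in\R^m$ be the random walk of the errors. Since $P_m^+=P_m^*(P_mP_m^*)^{-1}$, we have $W_n:=\|P_m^+\bar{Y}_n\m-P_m^+P_m\hat{y}\|^2=\frac{1}{n^2}M_n^TBM_n$, and independence across channels together with $\E(M_n)_l^2=n\,\E{\delta_{11}\m}^2$ gives $\E W_n=\frac{\E{\delta_{11}\m}^2}{n}\sum_{j=1}^m{\sigma_j\m}^{-2}=:g(n)$. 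For the deterministic value $\bar n:=m\,\E{\delta_{11}\m}^2/(c_m^2\delta^2)$ this yields exactly $g(\bar n)=\delta_m^2$. As $x\mapsto\sqrt{x}$ is continuous at $1$, it suffices to show $W_{n(m,\delta)}/\delta_m^2\to1$ in probability, uniformly in $0<\delta\le\delta_0$.

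First I would localise the random index. By Proposition \ref{sec:5prop1}, the event $A_m:=\{\sup_{n\ge2}|s_{m,n}^2-\E{\delta_{11}\m}^2|\le\eta\,\E{\delta_{11}\m}^2\}$ has probability tending to $1$ for every fixed $\eta>0$. On $A_m$ the stopping inequality $m\,s_{m,n}^2/(c_m^2n)\le\delta^2$ is sandwiched between the two decreasing functions $m(1\pm\eta)\E{\delta_{11}\m}^2/(c_m^2n)$, so that $n(m,\delta)$ lies in a deterministic interval $[\underline n,\overline n]$ with $n(m,\delta)/\bar n\in[1-\eta',1+\eta']$, where $\eta'\to0$ as $\eta\to0$; in particular both endpoints are of order $\bar n$ and tend to $\infty$ uniformly in $\delta\le\delta_0$. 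Consequently $g(n(m,\delta))/\delta_m^2=\bar n/n(m,\delta)$ can be made as close to $1$ as we like by choosing $\eta$ small, and it remains to prove $\sup_{\underline n\le n\le\overline n}|W_n/g(n)-1|\to0$ in probability, uniformly in $\delta$.

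Next I would split the quadratic form into its diagonal and off-diagonal parts, $W_n=D_n+O_n$ with $n^2D_n=\sum_lB_{ll}(M_n)_l^2$ and $n^2O_n=\sum_{l\ne l'}B_{ll'}(M_n)_l(M_n)_{l'}$. The gain is that both $\Psi_n:=n^2(D_n-g(n))=\sum_lB_{ll}((M_n)_l^2-n\,\E{\delta_{11}\m}^2)$ and $n^2O_n$ are martingales in $n$: their increments have vanishing conditional mean, for $\Psi_n$ because $(M_n)_l^2-n\,\E{\delta_{11}\m}^2$ is a martingale, and for $O_n$ because $\E[\delta_{n+1,l}\m\,\delta_{n+1,l'}\m]=0$ for $l\ne l'$ by independence across channels. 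For the off-diagonal term Doob's $L^2$-inequality together with $\mathrm{Var}(n^2O_n)=2(n\,\E{\delta_{11}\m}^2)^2\sum_{l\ne l'}B_{ll'}^2$ — which involves only products of variances and hence only second moments — shows that $\E\sup_{n\le\overline n}|O_n|/g(\underline n)$ is bounded by a constant times $(\overline n/\underline n)\,\|B\|_F/\mathrm{tr}(B)$. Since the well-conditioning in Assumption \ref{disc:idr} gives $\|B\|_F/\mathrm{tr}(B)=(\sum_j{\sigma_j\m}^{-4})^{1/2}/\sum_j{\sigma_j\m}^{-2}\le\kappa^2/\sqrt m\to0$ with $\kappa:=\limsup_mC_m/c_m<\infty$, the off-diagonal part is uniformly negligible under either error assumption.

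The diagonal term is the crux, because its natural $L^2$ bound involves $\mathrm{Var}((M_n)_l^2)$ and thus the fourth moment of the noise. Under Assumption \ref{err:disc}.2 with $p\ge2$ this moment is uniformly finite and the Doob $L^2$ argument used for $O_n$ carries over verbatim. Under Assumption \ref{err:disc}.1 only the variance is available, and here I would instead invoke Doob's $L^1$-maximal inequality at the deterministic horizon $\overline n$, reducing matters to $\E|\Psi_{\overline n}|=\overline n\,\E{\delta_{11}\m}^2\,\mathrm{tr}(B)\cdot\E|\sum_lw_l\hat\Xi_l|$, where $w_l:=B_{ll}/\mathrm{tr}(B)$ satisfy $\sum_lw_l=1$ and $\max_lw_l\le\kappa^2/m\to0$, and $\hat\Xi_l:=(M_{\overline n})_l^2/(\overline n\,\E{\delta_{11}\m}^2)-1$ are i.i.d.\ across $l$ with $\E|\hat\Xi_l|\le2$. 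The residual weighted law of large numbers $\E|\sum_lw_l\hat\Xi_l|\to0$ is then established by the very truncation-plus-central-limit argument of Lemma \ref{estlem1}: truncating each $\hat\Xi_l$ at a level $M$, the truncated part is controlled in $L^2$ using $\E|\hat\Xi_l|\le2$ and $\max_lw_l\to0$, while the tail $\E|\hat\Xi_l\,\chi_{\{|\hat\Xi_l|>M\}}|$ is made small uniformly for $\overline n\ge n^*$ because $(M_{\overline n})_l^2/(\overline n\,\E{\delta_{11}\m}^2)$ converges to a $\chi_1^2$ law without loss of mass, $\overline n$ being large uniformly in $\delta\le\delta_0$. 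Assembling the diagonal and off-diagonal estimates on $A_m$ with $g(n(m,\delta))/\delta_m^2\to1$ yields $W_{n(m,\delta)}/\delta_m^2\to1$, and hence the assertion. The main obstacle throughout is precisely the interaction of the random stopping index with the pseudoinverse weighting under a mere second-moment assumption, which rules out a direct variance bound and forces the martingale-plus-truncation route.
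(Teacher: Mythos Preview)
Your proposal is correct and follows the same high-level blueprint as the paper: localise the random index $n(m,\delta)$ into a deterministic window around $\bar n=m\,\E{\delta_{11}\m}^2/(c_m^2\delta^2)$ via Proposition~\ref{sec:5prop1}, then control the oscillation of the (suitably scaled) quadratic form over that window by showing it is a martingale and applying the Kolmogorov--Doob inequality, with an $L^2$ bound under Assumption~\ref{err:disc}.2 and an $L^1$ bound plus truncation under Assumption~\ref{err:disc}.1.

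The organisation of the quadratic-form estimate differs. The paper expresses $W_n$ in the eigenbasis of $B$ as $\sum_j\lambda_j(\sum_l Z_l(u_j\m,e_l\m))^2$ and, under the second-moment-only assumption, truncates the \emph{summands} $Z_l$ into a bounded part $V_l$ and a tail $W_l$; expanding the square produces three terms (the $V$--$V$ term handled by the auxiliary fourth-moment bound of Proposition~\ref{sec:4:helpprop}, the $W$--$W$ term by $\E W_1^2$, and the $V$--$W$ cross term by a direct variance computation). You instead split $W_n$ in the \emph{standard} basis into its diagonal part $D_n=\sum_l B_{ll}(M_n)_l^2/n^2$ and off-diagonal part $O_n$. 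This buys you that the off-diagonal variance involves only products of second moments, so a single Doob $L^2$ step disposes of it uniformly under either error assumption; the remaining diagonal is then a genuine weighted i.i.d.\ sum $\sum_l w_l\hat\Xi_l$, for which the truncation-plus-CLT device of Lemma~\ref{estlem1} applies directly once $\max_l w_l\le\kappa^2/m\to0$. Both routes lean on the same well-conditioning bound $\kappa(P_m)\le C_m/c_m$ from Assumption~\ref{disc:idr}; your decomposition is somewhat more economical in that it avoids the three-term expansion and the explicit fourth-moment Proposition~\ref{sec:4:helpprop} in the second-moment case, while the paper's version keeps everything in one basis throughout.
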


\begin{proof}
The auxiliary parameter $\delta_m$ has to be introduced due to the fact that with the choice of $n(m,\delta)$ we are actually overestimating $\E\left\| P_m^+\bar{Y}_{n(m,\delta)}\m - P_m^+P_m\hat{y}\right\|^2$ since $c_m\le \sigma_j\m$. We define 

\begin{equation*}
 \mu_m^\delta:=\frac{m\E [{\delta_{11}\m}^2]}{c_m^2\delta^2}
\end{equation*}

\begin{equation*}
I_\varepsilon(m,\delta):=\left[(1-\varepsilon)\mu_m^\delta,(1+\varepsilon)\mu_m^\delta\right].
\end{equation*}

$$\delta_{m,n}^{meas}:=\|P_m^+\bar{Y}_n\m - P_m^+P_m\hat{y}\| =  \sqrt{\sum_{j=1}^m \lambda_j\m \left( \sum_{l=1}^m \sum_{i=1}^n \frac{\delta_{ij}\m}{n} (u_j\m,e_l\m)\right)^2}$$

where $\lambda_j\m = {\sigma_j\m}^{-2}$ and $(u_j\m)_{j \le m}, (e_j\m)_{j \le m} \subset \R^m$ are the singular basis of $P_m$ (fulfilling \newline $P_m{P_m}^* u_j\m = {\sigma_j\m}^2 u_j\m$) and the canonical basis of $\R^m$ respectively. So

$$\E {\delta_{m,n}^{meas}}^2 = \sum_{j=1}^m \lambda_j \E \left(\sum_{l=1}^m \sum_{i=1}^n \frac{\delta_{il}\m}{n} (u_j\m,e_l\m)\right)^2  = \frac{\E {\delta_{11}\m}^2}{n}\sum_{j=1}^m \lambda_j$$

and

\begin{align*}
\mathbb{P}\left( \left|\frac{{\delta_{m,n(m,\delta)}^{meas}}^2 - \delta_m^2}{\delta_m^2}\right| \le \varepsilon\right)
&\ge \mathbb{P}\left( \left|\frac{{\delta_{m,n(m,\delta)}^{meas}}^2-\delta_m^2}{\delta_m^2}\right|\le \varepsilon, n(m,\delta) \in I_{\varepsilon'}\right)\\
&\ge \mathbb{P}\left( \sup_{n\in I_{\varepsilon'}} \left| \frac{{\delta_{m,n}^{meas}}^2-\delta_m^2}{\delta_m^2}\right|\le \varepsilon, n(m,\delta)\in I_{\varepsilon'}\right)\\
&\ge 1 - \mathbb{P}\left(\sup_{n\in I_{\varepsilon'}}\left|\frac{{\delta_{m,n}^{meas}}^2-\delta_m^2}{\delta_m^2}\right| >\varepsilon\right) - \mathbb{P}\left( n(m,\delta)\notin I_{\varepsilon'}\right).
\end{align*}

 Since 

\begin{equation*}
\left|\frac{{\delta_{m,n}^{meas}}^2-\delta_m^2}{\delta_m^2}\right| \le \left(\left| \frac{{\delta_{m,n}^{meas}}^2-\E {\delta_{m,n}^{meas}}^2}{\E {\delta_{m,n}^{meas}}^2}\right| + \left| \frac{\E {\delta_{m,n}^{meas}}^2-\delta_m^2}{\E {\delta_{m,n}^{meas}}^2}\right|\right) \frac{\E {\delta_{m,n}^{meas}}^2}{\delta_m^2}
\end{equation*}

and 

\begin{equation*}
\sup_{n\in I_{\varepsilon'}}\left| \frac{\E {\delta_{m,n}^{meas}}^2-\delta_m^2}{\delta_m^2}\right| =\frac{\varepsilon'}{1-\varepsilon'},\quad \sup_{n\in I_{\varepsilon'}}\frac{\E {\delta_{m,n}^{meas}}^2}{\delta_m^2} = \frac{1}{1-\varepsilon'},
\end{equation*}

 we conclude that for $\varepsilon'=\frac{3}{16}\varepsilon\le 1/4$

\begin{align}\notag
&\mathbb{P}\left( \left| \frac{{\delta_{m,n(m,\delta)}^{meas}}^2-\delta_m^2}{\delta_m^2}\right|\le \varepsilon\right)\\\notag
  \ge& 1 - \mathbb{P}\left( \sup_{n\in I_{\varepsilon'}}\left|\frac{{\delta_{m,n}^{meas}}^2-\E {\delta_{m,n}^{meas}}^2}{\E {\delta_{m,n}^{meas}}^2}\right|>\varepsilon(1-\varepsilon') - \frac{\varepsilon'}{1-\varepsilon'}\right) - \mathbb{P}\left( n(m,\delta) \notin I_{\varepsilon'}\right)\\\label{eq:th4:proof}
 \ge& 1 - \mathbb{P}\left( \sup_{n\in I_{\frac{3}{16}\varepsilon}}\left| \frac{{\delta_{m,n}^{meas}}^2-\E {\delta_{m,n}^{meas}}^2}{\E {\delta_{m,n}^{meas}}^2}\right|> \varepsilon/2\right)-\mathbb{P}\left(n(m,\delta)\notin I_{\frac{3}{16}\varepsilon}\right).
\end{align}

Thus it remains to show that the both terms with negative sign tend to zero.
\begin{proposition}\label{sec:3:helpprop1}
For every $\varepsilon>0$ there holds

$$\sup_{\delta_0\ge\delta>0} \mathbb{P}\left(n(m,\delta) \in I_{\varepsilon}(m,\delta)\right) \to 1$$

for $m\to\infty$.
\end{proposition}

\begin{proof}
For $m$ large enough it holds that $\lfloor(1+\varepsilon) \mu_m^\delta\rfloor\ge(1+\varepsilon/2)\mu_m^\delta$ and

\begin{align*}
& \{n(m,\delta) \in I_\varepsilon(m,\delta)\} = \left\{  \left| n(m,\delta) - \mu_m^\delta\right|\le \varepsilon \mu_m^\delta \right\}\\
\supseteq &\left\{\frac{ms_{m,n}^2}{c_m^2n}> \delta^2~,~ \forall ~ n <(1-\varepsilon)\mu_m^\delta\right\}\\
&\qquad \cap \left\{ \frac{ms_{m,n}^2}{c_m^2n}\le \delta^2~,~ \mbox{for } n =\lfloor (1+\varepsilon)\mu_m^\delta\rfloor\right\}\\
= &\left\{ {ms_{n,m}^2}>\frac{n}{\mu_m^\delta}~,~\forall n<(1-\varepsilon)\mu_m^\delta\right\} \cap \left\{ s_{n,m}^2 \le \frac{n}{\mu_m^\delta}~,~\mbox{for } n= \lfloor(1+\varepsilon)\mu_m^\delta\rfloor\right\}\\
\supseteq &\left\{ |{s_{n,m}^2} - \E [{\delta_{11}\m}^2]| \le \varepsilon/2 \E[{ \delta_{11}\m}^2]~,~\forall n\ge 2\right\},
\end{align*}

and the claim follows by Proposition \ref{sec:5prop1}.

\end{proof}

For the first term in \eqref{eq:th4:proof} we will need the following proposition.
\begin{proposition}\label{sec:4:helpprop}
For $(X_{l})_{l\in\N}$ i.i.d. with $\E X_l =0$, $\E X_l^2 =1$ and $\E X_l^4<\infty$ and\newline $(u_j)_{j\le m}, (e_j)_{j\le m} \subset \mathbb{R}^m$ orthonormal bases and $(\lambda_j)_{j \le m} \in \R^+$, it holds that

$$\E\left| \sum_{j=1}^m \lambda_j \left(\left(\sum_{l=1}^m X_l (u_j,e_l)\right)^2 -1\right)\right|^2 \le  \max_{j \le m} \lambda_j^2 (\E X_1^4 + 5) m .$$

\end{proposition}
\begin{proof}
By Jensen's inequality

\begin{align*}
&\left(\E\left[\left| \sum_{j=1}^m \lambda_j \left(\left(\sum_{l=1}^m X_l (u_j,e_l)\right)^2 -1\right)\right|\right]\right)^2\\
 \le &\E\left[\left| \sum_{j=1}^m \lambda_j \left(\left(\sum_{l=1}^m X_l (u_j,e_l)\right)^2 -1\right)\right|^2\right]\\
= & \sum_{j,j'=1}^m\lambda_j \lambda_{j'} \left(\E\left[\left(\sum_{l=1}^m X_l(u_j,e_l)\right)^2\left(\sum_{l'=1}^m X_{l'} (u_{j'},e_{l'})\right)^2\right]\right.\\
&\quad \left. - 2\E\left[ \left(\sum_{l=1}^m X_l (u_j,e_l)\right)^2\right] +1 \right)\\
 = & \sum_{j,j'=1}^m \lambda_j\lambda_{j'}\left( \sum_{l,l',l'',l'''=1}^m \E \left[X_lX_{l'}X_{l''}X_{l'''}\right](u_j,e_l)(u_j,e_{l'})(u_{j'},e_{l''})(u_{j'},e_{l'''})\right.\\
 &\quad\left. +2 \left(\E[X_1]^2\right)^2 - 1\right)\\
 = & \sum_{j,j'=1}^m \lambda_j\lambda_{j'} \left( \E X_1^4 \sum_{l=1}^m (u_j,e_l)^2(u_{j'},e_l)^2 + \left(\E[X_1^2]\right)^2\sum_{\substack{l,l'=1\\ l\neq l'}}^m (u_j,e_l)^2(u_{j'},e_{l'})^2 \right.\\
 &\quad \left. + 2 \left(\E[X_1^2]\right)^2\sum_{\substack{l,l'=1\\l\neq l'}}^m (u_j,e_l)(u_j,e_{l'})(u_{j'},e_l)(u_{j'},e_{l'}) - 1\right).
\end{align*}

With

 $$\sum_{\substack{l'=1\\l'\neq l}}^m (u_{j'},e_{l'})^2 = 1 - (u_{j'},e_l)^2$$
  and 
$$\sum_{\substack{l'=1\\ l'\neq l}}^m (u_j,e_{l'})(u_{j'},e_{l'}) = (u_j,u_{j'})- (u_j,e_l)(u_{j'},e_{l})$$

we further deduce that

\begin{align*} 
&\left(\E\left[\left| \sum_{j=1}^m \lambda_j \left(\left(\sum_{l=1}^m X_l (u_j,e_l)\right)^2 -1\right)\right|\right]\right)^2\\
  = & \sum_{j,j'=1}^m\lambda_j\lambda_{j'} \left( \E X_1^4 \sum_{l=1}^m (u_j,e_l)^2(u_{j'},e_l)^2 + \sum_{l=1}^m (u_j,e_l)^2(1-(u_{j'},e_l)^2) \right.\\
  &\quad \left. + 2 \sum_{l=1}^m (u_j,e_l)(u_{j'},e_l)\left((u_j,u_{j'})-(u_j,e_l)(u_{j'},e_l)\right) - 1\right)\\
    = & \sum_{j,j'=1}^m\lambda_j\lambda_{j'} \left( \E X_1^4 \sum_{l=1}^m (u_j,e_l)^2(u_{j'},e_l)^2 + 1 -\sum_{l=1}^m (u_j,e_l)^2(u_{j'},e_l)^2) \right.\\
  &\quad \left. + 2 \left((u_j,u_{j'})^2-\sum_l(u_j,e_l)^2(u_{j'},e_l)^2\right) - 1\right)\\
  &\le \max_{j\le m} \lambda_j^2  \left( \sum_{l=1}^m \sum_{j,j'=1}^m|\E X_1^4 -3|(u_j,e_l)^2(u_{j'},e_l)^2 + 2 \sum_{j,j'=1}^m (u_j,u_{j'})^2\right)\\
  &\le \max_{j\le m} \lambda_j^2 (\E X_1^4+5) m,
\end{align*}

\end{proof}

Finally, it holds that

\begin{align*}
M_n\m&:=  n\frac{{\delta_{m,n}^{meas}}^2 - \E {\delta_{m,n}^{meas}}^2}{\E {\delta_{m,n}^{meas}}^2}\\
 &= n\frac{\sum_{j=1}^m \lambda_j \left( \sum_{l=1}^m \sum_{i=1}^n \frac{\delta_{il}\m}{\sqrt{n}} (u_j\m,e_l\m)\right)^2 - \E{\delta_{11}\m}^2 \sum_{j=1}^m \lambda_j}{\E{\delta_{11}\m}^2 \sum_{j=1}^m \lambda_j}\\
& = \frac{n}{\sum_{j'=1}^m \lambda_{j'}} \sum_{j=1}^m \lambda_j \left( \left( \sum_{l=1}^m \sum_{i=1}^n \frac{\delta_{il}\m}{\sqrt{n\E{\delta_{11}\m}^2}} (u_j\m,e_l\m)\right)^2 - 1 \right).
\end{align*}

It is easy to verify that $(M_n\m)_{n\in\N}$ is a martingale adapted to the filtration $(\mathcal{F}_n)_{n\in\mathbb{N}}$ generated by the measurement errors $\mathcal{F}_n:=\sigma\left( \delta_{ij}\m~,~i\le n,j\le m\right)$ for every fixed $m\in\mathbb{N}$. 
Now assume that Assumption \ref{err:disc}.2 with $p\ge 2$ holds true. With $n_-:=(1+\frac{3}{16}\varepsilon)\mu_m^{\delta}, n_+:=(1+\frac{3}{16}\varepsilon)\mu_m^\delta$ we obtain via the Kolmogorov-Doob-inequality

\begin{align*}
\mathbb{P}\left(\sup_{n\in I_\frac{3}{16}\varepsilon}\left| \frac{{\delta_{m,n}^{meas}}^2-\E {\delta_{m,n}^{meas}}^2}{\E {\delta_{m,n}^{meas}}^2}\right|\ge \frac{\varepsilon}{2}\right)& = \mathbb{P}\left(n_-\sup_{n\in I_\frac{3}{16}\varepsilon}\left| \frac{{\delta_{m,n}^{meas}}^2-\E {\delta_{m,n}^{meas}}^2}{\E {\delta_{m,n}^{meas}}^2}\right|\ge \frac{n_-\varepsilon}{2}\right)\\
 &\le\mathbb{P}\left( \sup_{n\in I_{\frac{3}{16}\varepsilon}} |M_n\m| \ge \frac{n_-\varepsilon}{2}\right)
\le \frac{4\E\left[ {M_{n_+}\m}^2\right]}{\varepsilon^2n_-^2}.
\end{align*}

With $X_l:=\sum_i \delta_{ij}\m/\sqrt{n \E {\delta_{ij}\m}^2}$ Proposition \ref{sec:4:helpprop} yields

\begin{align*}
\frac{4\E\left[ {M_{n_+}\m}^2\right]}{n_-^2\varepsilon^2} &= \frac{4n_+^2}{n_-^2\varepsilon^2} \frac{\max_{j\le m}\lambda_j^2(\E X_1^4 + 5)m}{(\sum_j \lambda_j)^2}\\
& = \frac{4n_+^2}{\varepsilon^2n_-^2} \frac{\max_{j\le m} \sigma_j^{-4}}{\min_{j \le m} \sigma_j^{-4}} \left(\frac{\E {\delta_{11}\m}^4}{n_+ (\E {\delta_{11}\m}^2)^2} + 3 \frac{n_+-1}{n_+} + 5\right)\frac{1}{m}\\
& = \frac{n_+^2}{\varepsilon^2n_-^2} \kappa(P_m)^4 \left(\frac{C_d}{n_+} + 3 \frac{n_+-1}{n_+} + 5\right)\frac{1}{m}\to 0
\end{align*}
 as $m\to\infty$. In the following we write $u_j$ and $e_j$ for $u_j\m$ and $e_j\m$. Under Assumption \ref{err:disc}.1, the Kolmogorov-Doob-inequality yields

$$\mathbb{P}\left( \sup_{n\in I_{\frac{3}{16}\varepsilon}}\left|\frac{{\delta_{m,n}^{meas}}^2-\E {\delta_{m,n}^{meas}}^2}{\E {\delta_{m,n}^{meas}}^2}\right| \ge \frac{\varepsilon}{2}\right) \le \frac{\E{\left|M_{n_+}\m\right|}}{\varepsilon n_-}.$$

We set $S_m:=\frac{M_{n_+}\m}{n_+}\sum_{j=1}^m \lambda_j$ and $Z_l\m:=\sum_{i=1}^n \delta_{il}\m/\sqrt{n_+\E {\delta_{11}\m}^2}$ ($Z_l\m, j=1,...,m, m\in\N$ are i.i.d.). For $K>0$ we truncate

\begin{align*}
V_l\m:&= Z_l\m \chi_{\{|Z_l\m|\le K\}} - \E\left[  Z_l\m \chi_{\{|Z_l\m|\le K\}}\right]\\
 W_l\m:&= Z_l\m \chi_{\{|Z_l\m|> K\}} - \E\left[  Z_l\m \chi_{\{|Z_l\m|> K\}}\right].
\end{align*} 
  Then $\E V_l\m = \E W_l\m = 0=V_l\m W_l\m$ and therefore

\begin{align*}
&\E |S_m| \\
 = &\E\left| \sum_{j=1}^m \lambda_j\left( \left(\sum_{l=1}^m Z_l\m(u_j,e_l)\right)^2 -1 \right)\right|\\
     \le &\E \left| \sum_{j=1}^m  \lambda_j \left( \left(\sum_{l=1}^m V_l\m (u_j,e_l)\right)^2 - \E \left[{V_1^{(1)}}^2\right]\right)\right| + \E\left| \sum_{j=1}^m \lambda_j\left(\sum_{l=1}^m W_l\m (u_j,e_l)\right)^2 \right|\\
     &\qquad + 2\E \left| \sum_{j=1}^m \lambda_j \sum_{\substack{l,l'=1\\ l \neq l'}}^m V_l\m W_{l'}\m (u_j,e_l)(u_j,e_{l'})\right|+ \left|1-\E \left[{V_1^{(1)}}\right]^2\right|\sum_{j=1}^m \lambda_j.
\end{align*}

Since $\E \left[{V_1^{(1)}}^4\right]<\infty$, by Proposition \ref{sec:4:helpprop} above and Jensen's inequality

\begin{align*}
&\E \left| \sum_{j=1}^m  \lambda_j \left( \left(\sum_{l=1}^m V_l\m (u_j,e_l)\right)^2 - \E \left[{V_1^{(1)}}^2\right]\right)\right|\\
 \le& \sqrt{\E \left| \sum_{j=1}^m  \lambda_j \left( \left(\sum_{l=1}^m V_l\m (u_j,e_l)\right)^2 - \E \left[{V_1^{(1)}}^2\right]\right)\right|^2}\\
 \le& \|P_m^+\|^2 \sqrt{\E\left[ {V_1^{(1)}}^4\right] + 5}\sqrt{m}.
\end{align*}

For the second term

\begin{align*}
\E\left| \sum_{j=1}^m \lambda_j\left(\sum_{l=1}^m W_l\m (u_j,e_l)\right)^2 \right| &\le\E\left| \|P_m^+\|^2  \sum_{l,l'=1}^m W_l\m W_{l'}\m \sum_{j=1}^m(u_j,e_l)(u_j,e_{l'})\right|\\
& = \|P_m^+\|^2 \E\left|\sum_{l,l'=1}^m W_l\m W_{l'}\m (e_l,e_{l'})\right|\\
& = m\|P_m^+\|^2\E\left[{ W_1^{(1)}}^2\right]. 
\end{align*}

For the third term we calculate the variance

\begin{align*}
&\left(\E \left| \sum_{j=1}^m \lambda_j \sum_{\substack{l,l'=1\\ l \neq l'}}^m V_l\m W_{l'}\m (u_j,e_l)(u_j,e_{l'})\right|\right)^2\\
\le&\E \left| \sum_{j=1}^m \lambda_j \sum_{\substack{l,l'=1\\ l \neq l'}}^m V_l\m W_{l'}\m (u_j,e_l)(u_j,e_{l'})\right|^2\\
  \le &\E \sum_{j,j'=1}^m\lambda_j\lambda_{j'} \sum_{\substack{l,l'=1 \\ l\neq l'}}^m \sum_{\substack{l'',l'''=1\\l'' \neq l'''}}^m V_l\m W_{l'}\m V_{l''}\m W_{l'''}\m (u_j,e_l)(u_j,e_{l'})(u_{j'},e_{l''})(u_{j'},e_{l'''})\\
  =  &\E\left[{V_1^{(1)}}^2\right]  \sum_{j,j'=1}^m\lambda_j\lambda_j' \sum_{\substack{l,l'=1\\l \neq l'}}^m (u_j,e_l)(u_j,e_{l'})(u_{j'},e_l)(u_{j'},e_{l'})\\
  = &\E\left[ {V_1^{(1)}}^2\right] \E\left[ {W_1^{(1)}}^2\right]\sum_{j,j'=1}^m\lambda_j\lambda_{j'} \sum_{l=1}^m (u_j,e_l)(u_{j'},e_l)\left((u_j,u_{j'})-(u_j,e_l)(u_{j'},e_{l'})\right)\\
  = &\E \left[{V_1^{(1)}}^2\right]\E \left[{W_1^{(1)}}^2\right] \sum_{j,j'=1}^m \lambda_j \lambda_{j'} \left((u_j,u_{j'})^2 - \sum_{l=1}^m (u_j,e_l)^2(u_{j'},e_l)^2\right)\\
  \le& \E \left[{V_1^{(1)}}^2\right] \E\left[{ W_1^{(1)}}^2\right] \sum_{j,j'=1}^m \lambda_j \lambda_{j'} (u_j,u_{j'})^2 = \E \left[{V_1^{(1)}}^2\right] \E\left[{ W_1^{(1)}}^2\right] \sum_{j=1}^m \lambda_j^2\\
   \le& \E\left[ {V_1^{(1)}}^2\right] \E\left[{ W_1^{(1)}}^2\right] \|P_m^+\|^4 m.
\end{align*}

Altogether,

\begin{align*}
 &\frac{2\E |M_{n_+}\m|}{ \varepsilon n_-}\\
  \le&  \frac{2n_+}{ \varepsilon n_-} \frac{1}{\sum_j \lambda_j}\E|S_m|\\
  \le& \frac{2n_+}{\varepsilon n_-} \frac{1}{\sum_j \lambda_j}\left(\|P_m^+\|^2\sqrt{\E\left[{ V_1^{(1)}}^4\right] + 5}\sqrt{m} + m\|P_m^+||^2 \E\left[ {W_1^{(1)}}^2\right]  \right.\\
  &\qquad \left.+\sqrt{\E\left[{ V_1^{(1)}}^2\right]\E\left[{ W_1^{(1)}}^2\right]} \|P_m^+\|^2 \sqrt{m}  |1-\E V^2| m\|P_m^+\|^2\right)\\
  \le& \frac{2n_+\kappa(P_m)^2}{\varepsilon n_-\sqrt{m}}\left(\sqrt{\E \left[{V_1^{(1)}}^4\right]+5}+\sqrt{\E\left[{ V_1^{(1)}}^2\right] \E\left[{ W_1^{(1)}}^2\right]}\right)\\
  &\qquad +\frac{2n_+\kappa(P_m)^2}{\varepsilon n_-}\left(\E\left[{ W_1^{(1)}}^2\right]+\left|1-\E\left[{ V_1^{(1)}}^2\right]\right|\right).
 \end{align*}
 
 The claim follows with $\lim_{K\to\infty}\E\left[{ V_1^{(1)}}^2\right] =1, \lim_{K\to\infty}\E\left[{ W_1^{(1)}}^2\right] = 0$ and\newline $\sup_m \kappa(P_m)^2 < \infty$.

\end{proof}

We come to the main proof

\begin{proof}
We set 

\begin{equation*}
\Omega_m:=\left\{ \left\|P_m^+\bar{Y}_{n(m,\delta_m^{disc})}\m - P_m^+P_m\hat{y}\right\| \le \frac{\tau+C_0}{2C_0} \delta_m^{disc}\right\}.
\end{equation*}

Then

\begin{align}\label{th4:proof:eq}\notag
\left\lVert P_m^+\bar{Y}_{n(m,\delta_m^{disc})}\m-\hat{y}\right\rVert \chi_{\Omega_m} &\le \left\lVert P_m^+\bar{Y}_{n(m,\delta_m^{disc})}\m-P_m^+P_m\hat{y}\right\rVert \chi_{\Omega_m}+ \left\lVert P_m^+P_m\hat{y}-\hat{y}\right\rVert\chi_{\Omega_m}\\
 &\le \frac{\tau+3C_0}{2C_0} \delta_m^{disc}.
\end{align}

By Algorithm 2 it holds that

\begin{align*}
&\alpha_m\\
:=&\left\{ q^k~,~ k \in \N_0~,~\left\|KR_{\alpha_m}P_m^+\bar{Y}_{n(m,\delta_m^{disc})}\m - P_m^+\bar{Y}_{n(m,\delta_m^{disc})}\m \right\| \le 2\tau \delta_m^{disc}\right\}\\
=&\left\{ q^k~,~ k \in \N_0~,~\left\|KR_{\alpha_m}P_m^+\bar{Y}_{n(m,\delta_m^{disc})}\m - P_m^+\bar{Y}_{n(m,\delta_m^{disc})}\m \right\| \le \frac{4\tau C_0}{\tau+3C_0}  \frac{\tau+3C_0}{2C_0}\delta_m^{disc}\right\}
\end{align*}

and because of $\frac{4\tau C_0}{\tau+3C_0}>C_0$,\eqref{th4:proof:eq} and $\lim_{m\to\infty} \delta_m^{disc} = 0$ it follows that 

\begin{equation*}
\lim_{m\to\infty} \left\|R_{\alpha_m}P_m^+\bar{Y}_{n(m,\delta_m^{disc})}\m - K^+\hat{y}\right\|\chi_{\Omega_m} = 0
\end{equation*}

by Theorem 4.17 and Remark 4.18 from \cite{engl1996regularization}. With the same reasoning it follows that there exists a $L'\in\R$ such that

\begin{equation*}
\left\|R_{\alpha_m}P_m^+\bar{Y}_{n(m,\delta_m^{disc})}\m - K^+\hat{y}\right\| \chi_{\Omega_m} \le L' \rho^\frac{1}{\nu+1} {\delta_m^{disc}}^\frac{\nu}{\nu+1}
\end{equation*}

if there are $0<\nu\le \nu_0-1$ and $w\in\mathcal{X}$ with $K^+\hat{y} =(K^*K)^{\nu/2} w$ and $\|w\|\le \rho$. Lemma \ref{estlem2} implies that $\lim_{m\to\infty} \mathbb{P}\left(\Omega_m\right)=1$, which concludes the proof.

\end{proof}

\subsection{Proofs for Section \ref{sec:discussion}}

We start with the proof of Proposition \ref{discussion:prop}.

\begin{proof}[Proof of Proposition \ref{discussion:prop}]
For simplicity we assume that $\hat{x}=0=\hat{y}$. Let $K:l^2(\N)\to l^2(\N)$ be the diagonal operator with $K v_j = \sigma_j v_j:= \alpha_j^\frac{1}{4} v_j$ (with $(v_j)_{j\in\N}$ the canonical basis) and let $P_m$ be the discretisation along the singular basis, i.e. $(P_m y)_j = l_j\m(y) = (y,v_j)$ for all $j=1,...,m$ with $m\in\N$ and $y\in l^2(\N)$.  Then

\begin{align*}
\E\|R_\alpha\m \bar{Y}_m\m - K^+\hat{y}\|^2 &= \sum_{j=1}^m \frac{\sigma_j^2}{\left(\alpha_m+\sigma_j^2\right)} \E\left(\bar{Y}_m\m - P_m\hat{y},v_j\right)^2 = \E[\delta_{11}^2]\sum_{j=1}^m \frac{(\alpha_j^\frac{1}{4})^2}{\left(\alpha_m + (\alpha_j^\frac{1}{4})^2\right)^2}\\
 &= \E[\delta_{11}^2]\sum_{j=1}^m \frac{\sqrt{\alpha_j}}{\left(\alpha_m+\sqrt{\alpha_j}\right)^2} \ge \E[\delta_{11}^2] \sum_{j=1}^m \frac{\sqrt{\alpha_m}}{\left(\alpha_m+\sqrt{\alpha_m}\right)^2}\\
  &\ge \frac{\E[\delta_{11}^2]}{\sqrt{\alpha_m}} \sum_{j=1}^m \frac{1}{\left(\sqrt{\alpha_m}+1\right)^2} \ge \frac{m\E[\delta_{11}^2]}{\alpha_m(1+\sqrt{\alpha_1})^2}\to\infty
\end{align*}

as $m\to\infty$.

\end{proof}





\begin{proof}[Proof of Theorem \ref{discussion:prop1}]
We need the following auxiliary result, which we will afterwards use to confirm that the oscillations  of the residual are too strong  when the error distribution lacks of higher moments.

\begin{lemma}\label{discproof:prop1}
For $0<\varepsilon<1$ let $(\delta_{ij})_{i,j\in\N}$ be i.i.d. with density

$$f_\varepsilon(x) =  \frac{c_\varepsilon}{|x|^{3+\varepsilon}} \chi_{\{(-\infty,-b_\varepsilon]\cup[b_\varepsilon,\infty)\}}(x),$$

where $c_\varepsilon= \frac{\varepsilon^{1+\frac{\varepsilon}{2}}}{2(2+\varepsilon)^\frac{\varepsilon}{2}}$ and $b_\varepsilon = \sqrt{\frac{\varepsilon}{2+\varepsilon}}$. Then there exist $p_\varepsilon, \kappa_\varepsilon>0$ such that 

$$\mathbb{P}\left( \sum_{j=1}^m \left( \left(\frac{\sum_{i=1}^n \delta_{ij}}{\sqrt{n}}\right)^2 - 1\right) \ge \kappa_\varepsilon m^\frac{2-\varepsilon}{2+\varepsilon}\right) \ge p_\varepsilon$$

for all $c n\le m$ with $m,n$ large enough.

\end{lemma}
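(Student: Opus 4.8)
The plan is to exploit the heavy tail of $f_\varepsilon$: the sum is driven by a single anomalously large sample (a ``big jump''), while the remaining ``bulk'' only fluctuates on a comparable but controllable scale. Throughout write $\beta:=\frac{2-\varepsilon}{2+\varepsilon}$, $T_j:=n^{-1/2}\sum_{i=1}^n\delta_{ij}$ and $W_j:=T_j^2-1$, so the quantity in the statement is $S:=\sum_{j=1}^m W_j$. First I would record the elementary distributional facts by direct integration: $f_\varepsilon$ is a probability density, and the constants $c_\varepsilon,b_\varepsilon$ are chosen precisely so that $\E\delta_{11}=0$ and $\E\delta_{11}^2=1$ (hence $\E W_j=0$ and $S$ is centred), the tail is $\mathbb{P}(\delta_{11}>x)=\frac{c_\varepsilon}{2+\varepsilon}x^{-2-\varepsilon}$ for $x\ge b_\varepsilon$, and $\E|\delta_{11}|^q<\infty$ precisely when $q<2+\varepsilon$. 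In particular the fourth moment is infinite, which is the entire source of the phenomenon.

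Second, fix a large constant $\gamma>0$ (to be chosen at the end, depending only on $\varepsilon$ and $c$) and a truncation level $D:=\gamma\,n^{1/2}m^{\beta/2}$, so that $D^2/n=\gamma^2 m^\beta$. Split each sample as $\delta_{ij}=\delta_{ij}\mathbf{1}_{\{|\delta_{ij}|<D\}}+\delta_{ij}\mathbf{1}_{\{|\delta_{ij}|\ge D\}}$, write $T_j^{<}:=n^{-1/2}\sum_i\delta_{ij}\mathbf{1}_{\{|\delta_{ij}|<D\}}$ for the truncated column statistic and $\Sigma^{<}:=\sum_{j=1}^m\big((T_j^{<})^2-1\big)$ for the bulk. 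Then $S=\Sigma^{<}+\sum_{j=1}^m\big(T_j^2-(T_j^{<})^2\big)$ identically. The bulk is controlled by a second moment estimate, since truncation restores all moments. Using the truncated moments $\E[\delta_{11}^2\mathbf{1}_{\{|\delta_{11}|<D\}}]=1-\frac{2c_\varepsilon}{\varepsilon}D^{-\varepsilon}$ and $\E[\delta_{11}^4\mathbf{1}_{\{|\delta_{11}|<D\}}]\asymp D^{2-\varepsilon}$ together with the precise value of $\beta$, one finds $\E\Sigma^{<}=-C_1(\gamma)\,m^\beta$ and $\mathrm{Var}(\Sigma^{<})=C_2(\gamma)\,m^{2\beta}$ with $C_1(\gamma)=O(\gamma^{-\varepsilon})$ and $C_2(\gamma)=O(\gamma^{2-\varepsilon})$, uniformly for $m\ge cn$. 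This is the arithmetic heart of the exponent: centred mean and standard deviation both come out of order exactly $m^\beta$, the same scale as the target. Since $\gamma^2$ eventually dominates both $\gamma^{-\varepsilon}$ and $\gamma^{(2-\varepsilon)/2}$, Chebyshev's inequality gives $\mathbb{P}\big(\Sigma^{<}\ge-\tfrac{\gamma^2}{8}m^\beta\big)\ge 1-\eta(\gamma)$ with $\eta(\gamma)\to0$ as $\gamma\to\infty$.

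For the jump I would condition on the random set $\mathcal J:=\{(i,j):|\delta_{ij}|\ge D\}$ of exceedances; given $\mathcal J$, the exceedant and the sub-threshold samples are independent, which decouples the jump mechanism from the bulk. Consider the event that $\mathcal J$ is a single, positive entry $(i_0,j_0)$ of size $\delta_{i_0j_0}\ge 2D$ whose column satisfies $|\sum_{i\ne i_0}\delta_{ij_0}|\le D/2$. Its probability is bounded below by some $p_1(\gamma,c)>0$, because $mn\,\mathbb{P}(|\delta_{11}|\ge D)$ stays in a fixed positive range for $m\asymp n$ (a Poissonian count of exceedances, with a fixed conditional chance that the exceedance exceeds $2D$ and is positive), while the within-column remainder is $O(\sqrt n)=o(D)$. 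On this event every column $j\ne j_0$ contributes $T_j^2-(T_j^{<})^2=0$, whereas
\[
T_{j_0}^2-(T_{j_0}^{<})^2=\tfrac1n\delta_{i_0j_0}^2+\tfrac2n\delta_{i_0j_0}\sum_{i\ne i_0}\delta_{ij_0}\ge\tfrac1n\delta_{i_0j_0}(\delta_{i_0j_0}-D)\ge\tfrac{2D^2}{n}=2\gamma^2 m^\beta .
\]
Intersecting with the bulk event (both the bulk bound and the within-column bound are functions of the sub-threshold samples, so their intersection still has conditional probability at least $1-\eta(\gamma)-o(1)\ge\tfrac12$) yields $S\ge 2\gamma^2 m^\beta-\tfrac{\gamma^2}{8}m^\beta\ge\gamma^2 m^\beta$ with probability at least $\tfrac12 p_1(\gamma,c)$. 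Fixing $\gamma$ so large that $\eta(\gamma)\le\tfrac12$ then gives the claim with $\kappa_\varepsilon:=\gamma^2$ and $p_\varepsilon:=\tfrac12 p_1(\gamma,c)$.

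The main obstacle is exactly this interlocking of jump and bulk: the second paragraph shows the bulk fluctuates on the \emph{same} $m^\beta$ scale as the target, so it cannot be dismissed as negligible, and the decomposition must be organised (via conditioning on $\mathcal J$) so that one genuinely larger, independent jump still beats it with positive probability. A secondary point is uniformity over all $m\ge cn$: the regime $m\asymp n$ is the binding one, while for $m/n\to\infty$ the threshold $m^\beta$ is of strictly smaller order than the stable scale $\big(m\,n^{-\varepsilon/2}\big)^{2/(2+\varepsilon)}$ of $S$, the exceedance count grows, multiple jumps occur, and the event only becomes more likely; this case I would treat separately, or, alternatively, replace the whole argument by a Nagaev-type large-deviation (equivalently, triangular-array $\alpha$-stable limit with $\alpha=1+\varepsilon/2$) statement that covers all regimes at once.
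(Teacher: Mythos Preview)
Your approach is sound and genuinely different from the paper's. The paper truncates at the level of the \emph{column statistics} $X_{jn}:=T_j^2$: it first invokes a refined large-deviation result of Vinogradov to obtain the precise tail $\mathbb{P}(X_{jn}\ge x)\sim \tfrac{2c_\varepsilon}{2+\varepsilon}n^{-\varepsilon/2}x^{-1-\varepsilon/2}$, then controls the truncated part $\sum_j(X_{jn}-1)\chi_{\{X_{jn}\le x_n\}}$ via Berry--Esseen (computing its first three moments from that tail), and bounds the extreme part below by its maximum. You instead truncate the \emph{individual samples} $\delta_{ij}$ at level $D$; this sidesteps Vinogradov entirely (the tail of $|\delta_{11}|$ is exact) and replaces Berry--Esseen by Chebyshev on the bulk. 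What the paper's route buys is uniformity: its truncation level $x_n=\eta^{4/(2+\varepsilon)}m^{2/(2+\varepsilon)}n^{-\varepsilon/(2+\varepsilon)}$ is set at the correct stable scale, so the argument works for all $m\ge cn$ in one stroke, whereas your choice $D^2/n=\gamma^2 m^\beta$ is the \emph{target} scale and, as you note, only balances bulk and jump when $m\asymp n$. The simplest fix is to take $D^2/n=\gamma^2 m^{2/(2+\varepsilon)}n^{-\varepsilon/(2+\varepsilon)}$, prove the bound at that scale, and use that it dominates $m^\beta$ for $m\ge cn$; this is in effect the paper's strategy transported to your truncation.

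One correction: your variance claim $\mathrm{Var}(\Sigma^{<})=C_2(\gamma)m^{2\beta}$ drops the light-tail contribution $\approx 2m$ coming from $2(\E(\delta_{11}^{<})^2)^2$ in $\mathrm{Var}((T_1^{<})^2)$, and for $\varepsilon>2/3$ one has $2\beta<1$, so this term dominates and Chebyshev no longer gives $\eta(\gamma)\to0$. This is easy to repair, since in that range $m^\beta\ll\sqrt m$ and the bulk is asymptotically centred Gaussian on scale $\sqrt m$, whence $\mathbb{P}(\Sigma^{<}\ge-\tfrac{\gamma^2}{8}m^\beta)\ge\tfrac12-o(1)$ directly; the final probability is then $\ge\tfrac13 p_1$ instead of $\tfrac12 p_1$ and nothing else changes.
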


\begin{proof}[Proof of Lemma \ref{discproof:prop1}]
Straight forward computations show that $f_\varepsilon$ is indeed the density of a probability distribution and that $\E[\delta_{ij}] = 0$ and $\E[ \delta_{ij}^2] = 1$ and $\mathbb{P}\left( \delta_{ij} \ge x\right) = \frac{c_\varepsilon}{2+\varepsilon}\frac{1}{x^{2+\varepsilon}} = \mathbb{P}\left(|\delta_{ij}|\ge x\right)/2$ hold for $x\ge b_\varepsilon$.

Thus we may apply Corollary 1.1.2. from \cite{vinogradov1994refined} and obtain that there exist constants $K_1,K_2,K_3$ such that for any $n\in\N$  and  all $y\ge \sqrt{K_1 n \log(n)}$ there holds

\begin{align*}
&\left|\mathbb{P}\left( \sum_{i=1}^n \delta_{i1} \ge y\right) - \frac{c_\varepsilon}{2+\varepsilon} \frac{n}{y^{2+\varepsilon}}\right|\\
 \le &K_2 \frac{n}{y^{2+\varepsilon}}\left( \left(\frac{\sqrt{n}}{y}\right)^\frac{2+\varepsilon}{3+\varepsilon}+ e^{-K_3\left(\frac{y^2}{n}\right)^\frac{1}{3+\varepsilon}}\right) + n\sup_{x\ge y/3}\left|\mathbb{P}\left(\delta_{11}>x\right) - \frac{c_\varepsilon}{2+\varepsilon} \frac{1}{x^{2+\varepsilon}}\right|\\
 = & K_2 \frac{n}{y^{2+\varepsilon}}\left( \left(\frac{\sqrt{n}}{y}\right)^\frac{2+\varepsilon}{3+\varepsilon}+ e^{-K_3\left(\frac{y^2}{n}\right)^\frac{1}{3+\varepsilon}}\right)
\end{align*}

Therefore, by symmetry of the distribution of $\delta_{ij}$ we have for all $x\ge K_2 \log(n)$

\begin{align}\notag
\mathbb{P}\left(\left(\frac{\sum_{i=1}^n \delta_{i1}}{\sqrt{n}}\right)^2 \ge x\right) &= \mathbb{P}\left(|\sum_{i=1}^n \delta_{i1}| \ge \sqrt{xn}\right) = 2 \mathbb{P}\left(\sum_{i=1}^n \delta_{i1} \ge \sqrt{xn}\right)\\\notag
&= \frac{2c_\varepsilon}{2+\varepsilon} \frac{n}{(\sqrt{nx})^{2+\varepsilon}} + \mathcal{O}\left(K_2 \frac{n}{(\sqrt{nx})^{2+\varepsilon}}\left( \left(\frac{\sqrt{n}}{\sqrt{nx}}\right)^\frac{2+\varepsilon}{3+\varepsilon}+ e^{-K_3\left(\frac{(\sqrt{nx})^2}{n}\right)^\frac{1}{3+\varepsilon}}\right)\right)\\\notag
&= \frac{2c_\varepsilon}{2+\varepsilon} \frac{1}{n^\frac{\varepsilon}{2}x^{1+\frac{\varepsilon}{2}}} + \frac{1}{n^\frac{\varepsilon}{2}x^{1+\frac{\varepsilon}{2}}}\mathcal{O}\left(x^{-\frac{2+\varepsilon}{3+\varepsilon}}+e^{- x^\frac{1}{3+\varepsilon}}\right)\\\label{discproof:eq1}
&= \frac{2c_\varepsilon}{2+\varepsilon} \frac{1}{n^\frac{\varepsilon}{2}x^{1+\frac{\varepsilon}{2}}}\left(1+\mathcal{O}(1)\right)\qquad (\mbox{for} ~~x\to\infty).
\end{align}

Let $x_{n}\ge K_2\log(n)+1$ (will be specified later)  and $X_{jn}:=\left(\frac{\sum_{i=1}^n \delta_{ij}}{\sqrt{n}}\right)^2$. We truncate and split the sum in two parts

$$\sum_{j=1}^m \left( \left(\frac{\sum_{i=1}^n \delta_{ij}}{\sqrt{n}}\right)^2 - 1\right) = \sum_{j=1}^m\left(X_{jn}-1\right)\chi_{\{ X_{jn}\le x_n\}} + \sum_{j=1}^m\left(X_{jn}-1\right)\chi_{\{X_{jn} > x_n\}}.$$

The second term contains only the extremes of the sum and we will show that here both parts will contribute to the overall sum (note that when sufficiently high moments exist (at least a fourth moment) one could show that the overall sum is dominated by the first part).
We first treat the second term. Since $(X_{jn}-1)\chi_{\{X_{jn}>x_n\}} \ge x_n-1>0$ (for $n$ sufficiently large) we have that for $t_{n,m}\ge x_n-1$

\begin{align}\notag
\mathbb{P}\left( \sum_{j=1}^m\left(X_{jn}-1\right)\chi_{\{X_{jn} > x_n\}} > t_{n,m}\right) &\ge \mathbb{P}\left(\max_{j=1,...,m}\left(X_{jn}-1\right)\chi_{\{X_{jn} > x_n\}} >t_{n,m}\right)\\\notag
  &= 1 - \mathbb{P}\left( \max_{j=1,...,m}\left(X_{jn}-1\right)\chi_{\{X_{jn} > x_n\}} \le t_{n,m}\right)\\\notag
   &= 1 - \mathbb{P}\left( \left(X_{1n}-1\right)\chi_{\{X_{1n} > x_n\}} \le t_{n,m}\right)^m= 1 - \left(1-\mathbb{P}\left( X_{1n}-1 \ge t_{n,m}\right)\right)^m\\\label{discproof:eq2}
  &= 1 - \left(1 - \frac{2c_\varepsilon}{2+\varepsilon}\frac{1}{n^\frac{\varepsilon}{2}t_{n,m}^{1+\frac{\varepsilon}{2}}}\left(1+\mathcal{O}(1)\right)\right)^m.
\end{align}

For the remaining term we need the first three moments

\begin{align*}
\mu_{n}&:=\E\left[\left(X_{jn}-1\right)\chi_{\{X_{jn}\le x_n\}}\right],\\
\sigma_{n}&:=\sqrt{\E\left[\left(\left(X_{jn}-1\right)\chi_{\{X_{jn}\le x_n\}} - \mu_{n}\right)^2\right]},\\
\rho_{n}&:=\E\left[\left|\left(X_{jn}-1\right)\chi_{\{X_{jn}\le x_n\}} - \mu_{n}\right|^3\right].\\
\end{align*}

We claim that if $x_n/\log(n)\to 0$

\begin{align}\label{discproof:eq2a}
\mu_n &= -\frac{4c_\varepsilon(1+\mathcal{O}(1))}{(2+\varepsilon)\varepsilon} \frac{1}{n^\frac{\varepsilon}{2}x_n^\frac{\varepsilon}{2}}\\\label{discproof:eq2b}
\sigma_n^2 &\ge \left(1+\mathcal{O}(1)\right)2c_\varepsilon\frac{2-\varepsilon}{2+\varepsilon} \frac{x_n^{1-\frac{\varepsilon}{2}}}{n^\frac{\varepsilon}{2}}\\\label{discproof:eq2c}
\rho_n &\le \left(1+\mathcal{O}(1)\right)3c_\varepsilon\frac{4-\varepsilon}{2+\varepsilon} \frac{x_n^{2-\frac{\varepsilon}{2}}}{n^\frac{\varepsilon}{2}},
\end{align}

for $n\to\infty$. We will prove the assertions \eqref{discproof:eq2a} - \eqref{discproof:eq2c} with \eqref{discproof:eq1} and Theorem 12.1 of \cite{gut2013probability}. Note that $\varepsilon<2$ and $X_{jn}$ is positive. For \eqref{discproof:eq2a} because of $\E[ X_{jn}-1] = 0$ there holds

\begin{align*}
\mu_n &= \E\left[\left(X_{jn}-1\right)\chi_{\{X_{jn} \le x_n\}}\right] = - \E\left[\left(X_{jn}-1\right)\chi_{\{X_{jn} > x_n\}}\right]= - \E\left[X_{jn}\chi_{\{X_{jn} > x_n\}}\right] + \E\left[\chi_{\{X_{jn} > x_n\}}\right]\\
 &= - \int_{x_n}^\infty \mathbb{P}\left(X_{jn} > t\right) dt + \mathbb{P}\left( X_{jn} > x_n\right)= \frac{2c_\varepsilon(1+\mathcal{O}(1))}{2+\varepsilon}\frac{1}{n^\frac{\varepsilon}{2}} \left(-\int_{x_n}^\infty \frac{1}{t^{1+\frac{\varepsilon}{2}}} dt + \frac{1}{x_n^{1+\frac{\varepsilon}{2}}}\right)\\
  &= -\frac{4c_\varepsilon(1+\mathcal{O}(1))}{(2+\varepsilon)\varepsilon} \frac{1}{n^\frac{\varepsilon}{2}x_n^\frac{\varepsilon}{2}}\qquad (\mbox{for}~~n\to\infty).
\end{align*}

 Further, we obtain

\begin{align*}
 \E\left[\left(X_{jn}-1\right)^2\chi_{\{X_{jn}\le x_n\}}\right] &\ge \E\left[ (X_{jn}-1)^2\chi_{\{K_1\log(n)\le X_{jn} \le x_n\}}\right]= 2 \int_{K_1\log(n)-1}^{x_n-1} t \mathbb{P}\left( X_{jn}-1 > t\right)dt\\
  &= \frac{4c_\varepsilon(1+\mathcal{O}(1))}{2+\varepsilon}\frac{1}{n^\frac{\varepsilon}{2}}\int_{K_1\log(n)}^{x_n} t^{-\frac{\varepsilon}{2}}dt = \left(1+\mathcal{O}(1)\right)2c_\varepsilon\frac{2-\varepsilon}{2+\varepsilon} \frac{x_n^{1-\frac{\varepsilon}{2}}}{n^\frac{\varepsilon}{2}}
\end{align*}

for $n\to\infty$, which together with \eqref{discproof:eq2a} yields \eqref{discproof:eq2b}. Finally, a similar reasoning proves

\begin{align*}
\E\left[\left|X_{jn}-1\right|^3\chi_{\{X_{jn}\le x_n\}}\right] &\ge \left(1+\mathcal{O}(1)\right)3c_\varepsilon\frac{4-\varepsilon}{2+\varepsilon} \frac{x_n^{2-\frac{\varepsilon}{2}}}{n^\frac{\varepsilon}{2}},
\end{align*}

which in turn implies \eqref{discproof:eq2c}.

Now

\begin{align*}
\mathbb{P}\left( \sum_{j=1}^m\left(X_{jn}-1\right)\chi_{\{X_{jn}\le x_n\}} \le t\right) &= \mathbb{P}\left( \frac{\sum_{j=1}^m\left(X_{jn}-1\right)\chi_{\{X_{jn}\le x_n\}} - \mu_n}{\sqrt{m\sigma_n^2}} \sqrt{m\sigma_n^2} + m \mu_n \le t\right)\\
&= \mathbb{P}\left(\frac{\sum_{j=1}^m\left(X_{jn}-1\right)\chi_{\{X_{jn}\le x_n\}} - \mu_n}{\sqrt{m\sigma_n}} \le \frac{t-m\mu_n}{\sqrt{m\sigma_n}}\right).
\end{align*}

By The Berry-Esseen Theorem (Theorem 6.1 in \cite{gut2013probability}), there exist $C>0$ such that

\begin{align*}
\mathbb{P}\left( \sum_{j=1}^m(X_{jn}-1)\chi_{\{X_{jn}\le x_n\}}\le t\right) &\le \Phi\left(\frac{t-m\mu_n}{\sqrt{m\sigma_n^2}}\right) + C \frac{\rho_n}{\sigma_n^3\sqrt{m}},
\end{align*}

where $\Phi$ is the cumulative distribution function of a standard Gaussian random variable. With \eqref{discproof:eq2a}-\eqref{discproof:eq2c} we see that

\begin{align*}
\frac{m\mu_n}{\sqrt{m\sigma_n^2}} &= -(1+\mathcal{O}(1))\frac{1}{\varepsilon}\sqrt{\frac{8 c_\varepsilon}{4-\varepsilon^2}} \frac{m^\frac{1}{2}}{n^\frac{\varepsilon}{4}x_n^{\frac{1}{2}+\frac{\varepsilon}{4}}},\\
\frac{\rho_n}{\sigma_n^3\sqrt{m}} &=(1+\mathcal{O}(1))3(4-\varepsilon)\sqrt{\frac{2+\varepsilon}{8c_\varepsilon(2-\varepsilon)^3}} \frac{n^\frac{\varepsilon}{4}x_n^{\frac{1}{2}+\frac{\varepsilon}{4}}}{m^\frac{1}{2}}.
\end{align*}

Therefore, for 

$$x_{n}:= \left(\frac{\eta \sqrt{m}}{n^\frac{\varepsilon}{4}}\right)^\frac{1}{\frac{1}{2}+\frac{\varepsilon}{4}} =  \eta^\frac{4}{2+\varepsilon} \frac{m^\frac{2}{2+\varepsilon}}{n^\frac{\varepsilon}{2+\varepsilon}},$$

 there exist constants $\beta_\varepsilon, \gamma_\varepsilon>0$ such that

$$\mathbb{P}\left( \sum_{j=1}^m(X_{jn}-1)\chi_{\{X_{jn}\le x_{n}\}}\le 2m\mu_n\right)\le \Phi\left(-\frac{\beta_\varepsilon}{\eta}\right) + \gamma_\varepsilon \eta$$

for all $m,n$ large enough, since with the above choice because of $n\le c m$ and $\varepsilon<1$ there holds $x_n/\log(n)\to 0$. Note that the right hand side does not depend on $m$ and $n$. There holds

$$-m\mu_n = \left(1+\mathcal{O}(1)\right) \frac{4c_\varepsilon}{\eta^\frac{2\varepsilon}{2+\varepsilon}(2+\varepsilon)\varepsilon} \frac{m^\frac{2}{2+\varepsilon}}{n^\frac{\varepsilon}{2+\varepsilon}},$$

thus for $\eta$ small we have $-m\mu_n > x_n$. Consequently, with $t_{m,n} = -3 m \mu_n$ \eqref{discproof:eq2} becomes

\begin{align*}
\mathbb{P}\left( \sum_{j=1}^m\left(X_{jn}-1\right)\chi_{\{X_{jn} > x_n\}} > t_{n,m}\right)&\ge 1 - \left(1 - \frac{2c_\varepsilon}{2+\varepsilon}\frac{1}{n^\frac{\varepsilon}{2}t_{n,m}^{1+\frac{\varepsilon}{2}}}\left(1+\mathcal{O}(1)\right)\right)^m\\
                   &=1-\left(1-\frac{2c_\varepsilon}{2+\varepsilon}\frac{1}{n^\frac{\varepsilon}{2}\left(\left(1+\mathcal{O}(1)\right)3 m^\frac{2}{2+\varepsilon}n^{-\frac{\varepsilon}{2+\varepsilon}}\eta^{-\frac{2\varepsilon}{2+\varepsilon}}\right)^{1+\frac{\varepsilon}{2}}}\right)^m\\
                   &=1-\left(1-\left(1+\mathcal{O}(1)\right)\frac{2c_\varepsilon}{2+\varepsilon}\frac{\eta^\varepsilon}{m}\right)^m\ge 1-e^{-\left(1+\mathcal{O}(1)\right) \frac{2c_\varepsilon}{2+\varepsilon} \eta^\varepsilon}\\
                   &\ge \zeta_\varepsilon \eta^\varepsilon,
\end{align*}

for some constant $\zeta_\varepsilon>0$, when $m,n$ and $\eta$ are  sufficiently large respectively small. Putting all together we obtain

\begin{align*}
&\mathbb{P}\left( \sum_{j=1}^m\left(\left(\frac{\sum_{i=1}^n \delta_{ij}}{\sqrt{n}}\right)^2-1\right)\ge -m\mu_n\right)\\
 \ge &\mathbb{P}\left( \sum_{j=1}^m \left(X_{jn}-1\right)\chi_{\{X_{jn}\le x_{n}\}} \ge 2m\mu_n,~\sum_{j=1}^m \left(X_{jn}-1\right)\chi_{\{X_{jn}>x_{n}\}}\ge -3m\mu_n\right)\\
 \ge &1-\mathbb{P}\left(\sum_{j=1}^m \left(X_{jn}-1\right)\chi_{\{X_{jn}\le x_{n}\}} < 2m\mu_n\right) - \mathbb{P}\left(\sum_{j=1}^m \left(X_{jn}-1\right)\chi_{\{X_{jn}>x_{n}\}}< -3m\mu_n\right)\\
 \ge &1-\left(\Phi\left(-\frac{\beta_\varepsilon}{\eta}\right) + \gamma_\varepsilon \eta\right) - \left(1-\zeta_\varepsilon \eta^\varepsilon\right) = \zeta_\varepsilon \eta^\varepsilon - \gamma_\varepsilon \eta - \Phi\left(-\frac{\beta_\varepsilon}{\eta}\right)\ge \zeta_\varepsilon \eta^\varepsilon - \gamma_\varepsilon \eta - e^{-\frac{\beta_\varepsilon^2}{\eta^2}} \ge p_\varepsilon
\end{align*}

for some $p_\varepsilon>0$ for $\eta=\eta_\varepsilon$ sufficiently small and fixed (since $\varepsilon<1$). Finally, the assertion follows with

$$-m\mu_n = \left(1+\mathcal{O}(1)\right) \frac{4c_\varepsilon c^\frac{\varepsilon}{2+\varepsilon}}{\eta^\frac{2\varepsilon}{2+\varepsilon}(2+\varepsilon)\varepsilon} \frac{m^\frac{2}{2+\varepsilon}}{n^\frac{\varepsilon}{2+\varepsilon}}\ge \kappa_\varepsilon m^\frac{2-\varepsilon}{2+\varepsilon},$$

which holds for some $\kappa_\varepsilon>0$ and all $n\le m/c$ with $m,n$ large enough.

\end{proof}


We come to the main proof of Theorem \ref{discussion:prop1} and first look at the case where $\tau>1$ and $\hat{x}= K^+\hat{y} = \sum_{j=1}^\infty j^{-1} v_j$. Set 

$$C:=\sqrt{\sum_{\substack{j=1\\ j^{-q} \le \frac{qc(\tau-1)^2}{\pi^2}}}^\infty j^{-2}}.$$

Now, for $\alpha>0$ we have
\begin{align*}
\left\|\left(P_mK R_{\alpha}\m-Id_{\R^m}\right)\bar{Y}_n\m\right\|&\le\left\|\left(P_mK R_{\alpha}\m-Id_{\R^m}\right)P_m\hat{y}\right\| +\left\|\left(P_mK R_{\alpha}\m-Id_{\R^m}\right)\left(\bar{Y}_n\m-P_m\hat{y}\right)\right\|\\
&\le \sqrt{\sum_{\substack{j=1\\ j^{-q}<\alpha}}^m(\hat{y},v_j)^2} + \|\bar{Y}_n\m-P_m\hat{y}\| = \sqrt{\sum_{\substack{j=1\\ j^{-q}<\alpha}}^mj^{-q-2}} + \|\bar{Y}_n\m-P_m\hat{y}\|\\
 &\le  \sqrt{\alpha \sum_{j=1}^\infty j^{-q-2}} + \|\bar{Y}_n\m-P_m\hat{y}\| \le \sqrt{\frac{\alpha\pi^2}{6}} + \|\bar{Y}_n\m-P_m\hat{y}\|.
\end{align*}

This, together with the defining relation of the discrepancy principle $$\tau \delta_{m,n} <\left\|\left(P_mK R_{\alpha_{m,n}/q}\m-Id_{\R^m}\right)\bar{Y}_n\m\right\|,$$ Lemma \ref{estlem1}, Proposition \ref{sec:5prop1} and $\E[\delta_{11}^2]=1$ ultimately yields

$$\mathbb{P}\left( \alpha_{m,n} \ge \frac{q(\tau-1)^2}{\pi^2}\frac{m}{n}\right)\to 1$$

as $m\to\infty$ (uniformly in $n\in\N$). Consequently, we have that

\begin{align*}
\mathbb{P}\left( \|R_{\alpha_{m,n}}\bar{Y}_n\m-\hat{x}\|\ge C\right)&= \mathbb{P}\left(\sqrt{\sum_{\substack{j=1\\ \alpha_{m,n}\le j^{-q}}}^{m} \frac{\left(\bar{Y}_n\m-P_m\hat{y}\right)_j^2}{j^q} + \sum_{\substack{j=1\\\alpha_{m,n}>j^{-q}}}^\infty (\hat{x},e_j)^2}\ge C\right)\\
        &\ge\mathbb{P}\left(\sqrt{\sum_{\substack{j=1\\ \alpha_{m,n}>j^{-q}}}^\infty j^{-2}} \ge C,~\alpha_{m,n}\ge \frac{q}{\pi^2}\frac{m}{n}\right)= \mathbb{P}\left(\alpha_{m,n}\ge \frac{q(\tau-1)^2}{\pi^2}\frac{m}{n}\right) \to 1
\end{align*}

as $m,n\to\infty$ with $m/n\ge c$, where we used the definition of $C$ in the third step. Thus Theorem \ref{discussion:prop} is proved in the case that $\tau>1$.

Now we discuss the case, where the right hand side $\tau \delta_{m,n}$ in line 6 of Algorithm 1 is replaced with $\sqrt{\frac{1+\sqrt{m}}{n}}$ and where $\hat{x}=K^+\hat{y}=0$. Let $\varepsilon<\frac{2}{11}$ in the definition of the density of the error distribution of the $\delta_{ij}$. Then there exists $\gamma$ with $\frac{5}{6} < \gamma < \frac{2-\varepsilon}{2+\varepsilon}$ and by Lemma \ref{discproof:prop1} there exist $p_\varepsilon>0$ such that

\begin{equation}\label{discproof:eq4}
\mathbb{P}\left(\sum_{j=m^\frac{1}{2}+1}^m\left(\frac{\sum_{i=1}^n\delta_{ij}}{n}\right)^2 \ge \frac{m-m^\frac{1}{2}}{n}+\frac{(m-m^\frac{1}{2})^\gamma}{n}\right) \ge p_\varepsilon
\end{equation}

for all $m,n$ large enough with $n\le m$. Since

$$m-m^\frac{1}{2} +(m-m^\frac{1}{2})^\gamma \ge m-m^\frac{1}{2} + \frac{1}{2^\gamma}m^\gamma = m + m^\frac{1}{2} + \frac{m^{\frac{1}{2}}}{2^\gamma}\left(m^{\frac{\gamma}{2}-1}-2^\gamma\right)>m+m^\frac{1}{2}$$

for $m$ sufficiently large there holds for $\alpha^*_{m,n}:=(m^\frac{1}{2})^{-q} = \sigma_{\sqrt{m}}^2$

$$\mathbb{P}\left(\left\lVert P_mK R_{\alpha*_{m,n}}\m \bar{Y}_n\m - \bar{Y}_n\m\right\rVert = \sum_{j=m^{\frac{1}{2}}+1}^m\left(\frac{\sum_{i=1}^n\delta_{ij}}{n}\right)^2 > \frac{m+m^\frac{1}{2}}{n} = \left(\tau_m\delta_{n,m}^{est}\right)^2\right)\ge p_\varepsilon$$

for all $m,n$ large enough with $n\le m$. By monotonicity of the spectral cut-off regularisation we deduce that

$$\mathbb{P}\left( \|P_mKR_\alpha\m\bar{Y}_n\m-\bar{Y}_n\m \|>\tau_m \delta_{m,n}^{est},~\forall \alpha_{m}^*\le \alpha\le \|K\|^2\right)\ge p_\varepsilon,$$

thus $\mathbb{P}\left( \alpha_{m,n}\le \alpha_{m}^*\right)\ge p_\varepsilon$ for all $m,n$ large enough with $n\le m$. Further, 

\begin{align*}
&\mathbb{P}\left( \|R_{\alpha_{m,n}}\m\bar{Y}_{n}\m-\hat{x}\|^2 \ge \frac{m^\frac{1+q}{2}}{n}\right) \ge \mathbb{P}\left(\sum_{j=1}^{m^\frac{1}{2}} j^q \left(\frac{\sum_{i=1}^n\delta_{ij}}{n}\right)^2 \ge \frac{m^\frac{1+q}{2}}{n},~\alpha_{m,n}\le \alpha_m^*\right)\\
\ge &\mathbb{P}\left(\sum_{j=\frac{m^\frac{1}{2}}{2}}^{m^\frac{1}{2}} j^q \left(\frac{\sum_{i=1}^n\delta_{ij}}{n}\right)^2 \ge \frac{m^\frac{1+q}{2}}{n},~\alpha_{m,n}\le \alpha_m^*\right)\\
\ge&\mathbb{P}\left(\frac{m^\frac{q}{2}}{2^q}\sum_{j=\frac{m^\frac{1}{2}}{2}}^{m^\frac{1}{2}} \left(\frac{\sum_{i=1}^n\delta_{ij}}{n}\right)^2 \ge \frac{m^\frac{1+q}{2}}{n},~\alpha_{m,n}\le \alpha_m^*\right)\\
\ge &1-\mathbb{P}\left(\alpha_{m,n}> \alpha_m^*\right)-\mathbb{P}\left(\sum_{j=\frac{m^{\frac{1}{2}}}{2}}^{m^\frac{1}{2}}\left(\frac{\sum_{i=1}^n\delta_{ij}}{n}\right)^2 < \frac{2^q m^\frac{1}{2}}{n}\right)\ge p_\varepsilon-\mathbb{P}\left(\sum_{j=\frac{m^{\frac{1}{2}}}{2}}^{m^\frac{1}{2}}\left(\frac{\sum_{i=1}^n\delta_{ij}}{n}\right)^2 < \frac{2^q m^\frac{1}{2}}{n}\right)
\end{align*}

for all $m,n$ large enough with $n\le m$. Finally, with \eqref{chap2:lem:eq0} it follows that

$$\mathbb{P}\left(\sum_{j=\frac{m^{\frac{1}{2}}}{2}}^{m^\frac{1}{2}}\left(\frac{\sum_{i=1}^n\delta_{ij}}{n}\right)^2 < \frac{2^q m^\frac{1}{2}}{n}\right)\to 0$$

as $m\to\infty$ (uniformly in $n\in\N$), which finishes the proof of Theorem \ref{discussion:prop1} since $q>1$.

\end{proof}

\begin{proof}[Proof of Proposition \ref{discussion:prop2}]
Let $\sigma_{1}\m,...,\sigma_m\m$ and $v_1\m,...,v_m\m$ denote the singular values respectively vectors of $KP_m$. Clearly, $v_j\m=v_j$ and $\sigma_j\m=j^{-1}$ for all $m\in\N$ and $j=1,...,m-1$. Moreover, the ansatz $v_m\m=a v_m + b v_{\lceil e^{m}\rceil}$  in 

$$(P_mK)^*P_mK v_m\m = {\sigma_m\m}^2 v_m\m$$

yields

$$\sigma_m\m =\sqrt{ m^{-2}e^{-2m} + \lceil e^{m}\rceil^{-2} \left(1-e^{-2m}\right)}\quad\mbox{and}\quad v_m\m= \frac{ m^{-1}e^{-m}v_m + \lceil e^{m}\rceil^{-1} \sqrt{1-e^{-2m}}v_{\lceil e^{m}\rceil}}{\sigma_m\m}.$$

Now 

\begin{align*}
\sum_{j=1}^{m-1} (\hat{x},v_j)v_j + \left(\hat{x},v_m\m\right)v_m\m &= \sum_{j=1}^m \left(\hat{x},v_j\m\right)v_j\m=P_{\mathcal{N}(P_mK)^\perp}\hat{x} = \left(\left(P_mK\right)^*P_mK\right)^{\frac{\nu_m}{2}}w_m\\
 &= \sum_{j=1}^m {\sigma_j\m}^{\nu_m} (w_m,v_j\m)v_j\m = \sum_{j=1}^{m-1} \sigma_j^{\nu_m} (w_m,v_j) v_j + {\sigma_m\m}^{\nu_m}\left(w_m,v_m\m\right)v_m\m.
\end{align*}

Consequently,

\begin{align*}
\rho_m\ge|(w_m,v_m\m)|&\stackrel{!}{=} \frac{\left(\hat{x},v_m\m\right)}{{\sigma_m\m}^{\nu_m}} = \frac{\left(\sum_{j=1}^\infty j^{-2} v_j, m^{-1}e^{-m} v_m + \lceil e^{m}\rceil^{-1} \sqrt{1-e^{-2m}} v_{\lceil e^{m}\rceil}\right) }{{\sigma_m\m}^{1+\nu}}\\
 &= \frac{ m^{-3}e^{-m} + \lceil e^{m}\rceil^{-3} \sqrt{1-e^{-2m}}}{\left( m^{-2}e^{-2m} + \lceil e^{m}\rceil^{-2}\left(1-e^{-2m}\right)\right)^\frac{1+\nu_m}{2}} \ge \frac{ m^{-3}e^{-m}}{\left(2 e^{-2m}\right)^\frac{1+\nu_m}{2}} = \frac{e^{\nu_m m}}{2^\frac{1+\nu_m}{2} m^3}
\end{align*}

and the proof of the proposition can be finished by a case-by-case analysis.

\end{proof}

\section{Numerical Demonstration}\label{sec:num}
We provide numerical experiments to complement the theoretical analysis. Three model examples, i.e. \texttt{phillips} (mildly
ill-posed, smooth), \texttt{gravity} (severely ill-posed, medium smooth)
and \texttt{shaw} (severely ill-posed, non smooth) are taken from the open source \texttt{MATLAB} package Regutools \cite{hansen1994regularization}.
The problems cover a variety of
settings, e.g. different solution smoothness and degree of ill-posedness. These examples are discretisations
of Fredholm/Volterra integral equations of the first kind by means of either the Galerkin approximation with piecewise constant basis functions or quadrature rules. We approximate our infinite-dimensional $K$ with one of the above examples with dimension $m_\infty\gg 1$. The number of measurements channels $m$ is then always chosen such that $m\ll m_\infty$. In most of the examples we use discretisation by box functions as follows, compare to Lemma \ref{fdr:box}.
With $k=m_\infty/m$ we set

\begin{align*}
 P_m &: \mathbb{R}^{m_\infty}\to\mathbb{R}^m\\
     &   \begin{pmatrix} y_{(i-1)k+1}\\ ... \\ y_{(i-1)k+k}\end{pmatrix} \mapsto \frac{1}{\sqrt{k}}\left(y_{(i-1)k+1}+...+y_{(i-1)k+k}\right)e_{i}
\end{align*}

 where $i=1,...,m$ and $e_1,...,e_m$ is the canonical basis of $\R^m$. In  Subsection \ref{sec:num:comp} we will also consider discretisation by hat functions to give an example with non-orthogonal discretisation.
We chose a shifted generalised Pareto distribution for the distribution of the measurement error, i.e. $\delta_{ij}\m= Z_{ij}\m-E Z_{ij}\m$, where $Z_{ij}\m$ are i.i.d and follow a generalised Pareto distribution (gprnd($l$,$\sigma$,$\theta$,$m$,$n$) in Matlab, with $l=1/3$, $\sigma=\sqrt{(1-l)^2(1-2l)\|\hat{y}\|}$ and $\theta=0$). This distribution is highly non-symmetric with a heavy tail. The above choices for the parameters imply that $\E {\delta_{ij}\m}^2 = \| \hat{y}\|$ and  $\E |\delta_{ij}\m|^3=\infty$. Thus the error fulfills Assumption \ref{err:disc}.1 in all the examples. The parameter $\tau$ in the definition of the discrepancy principle is set to $\tau=1.2$. All the statistical quantities are computed for $100$ independent runs, and the results are presented as box plots.

\subsection{Convergence of finite-dimensional residuum approach}
First we visualise the convergence of the discrepancy principle with the finite-dimensional\\ residuum approach, as stated in Corollary \ref{cor1}. We use discretisation by box functions as presented above and set $m_\infty=4000$ and $m=5,10,20$. For each $m$ we plot in Figure \ref{conv1} the resulting relative errors $\left\| R_{\alpha_{m,n}}\m \bar{Y}_n\m - \hat{x}\right\|/\|\hat{x}\|$
for $n=10,...,10^9$ repetitions. For $m$ fix the relative errors first decrease steadily and then saturate (at $\left\| \hat{x} -(P_mK)^+P_mK\hat{x}\right\|$) as the number of repetitions $n$ grows.
The saturation level decreases rapidly while $m$ grows, confirming the convergence of the approach. It is notable that for all examples a fairly small number of measurement channels is sufficient to yield good approximations.

\subsection{(Semi-)Convergence of infinite-dimensional residuum approach}
Now we come to the discrepancy principle with the infinite-dimensional residuum approach as stated in Corollary \ref{cor2}. Again we chose discretisation by box functions for the measurements with $m_\infty = 4000$ and this time we set $m=20,50,100$. For each $m$ we plot in the right column of Figure \ref{conv1} the resulting relative errors $\left\|R_{\alpha_m}P_m^+ \bar{Y}_{n(m,\delta_m^{disc})}\m-\hat{x}\right\|/\|\hat{x}\|$ for varying upper bound $\delta_m^{disc}$ from Assumption \ref{disc:idr}. More precisely we chose the latter in relation to the exact discretisation error $d_m:=\left\|\hat{y}-P_m^+P_m\hat{y}\right\|$. In particular we also consider $\delta_m^{disc}<d_m$ and we exhibit a semi-convergence.  Strictly speaking, the last two choices ($d_m/2$ and $d_m/4$) for $\delta_m^{disc}$ violate Assumption \ref{disc:idr} and we thus illustrate the sensitiveness to underestimation of the true discretisation error. It is notable that for the choice $\delta_m^{disc}=d_m/2$ (e.g. underestimation of the discretisation error by a factor $1/2$) the relative errors are still decreasing. This is explained by the fact that the estimation in \eqref{idr:est:disc} is quite coarse. Together with the choice $\tau=1.2$ this implies that it still holds that the true unknown error $\left\|P_m^+\bar{Y}_{n(m,\delta_m^{disc})}\m - \hat{y}\right\|$ fulfills $\left\|P_m^+\bar{Y}_{n(m,\delta_m^{disc})}\m - \hat{y}\right\|<2\tau \delta_m^{disc}$. For the choice $\delta_m^{disc}=d_m/4$ the errors then diverge.
The semi-convergence is in contrast to the saturation observed in the left column of Figure \ref{conv1} and illustrates the fundamental difference that for the finite-dimensional approach no quantitative knowledge of the discretisation error is required, while for the infinite-dimensional approach it is.

\begin{figure}
\label{conv1}
\centering
\setlength{\tabcolsep}{0pt}
\begin{tabular}{c c}
\includegraphics[width=.4\textwidth]{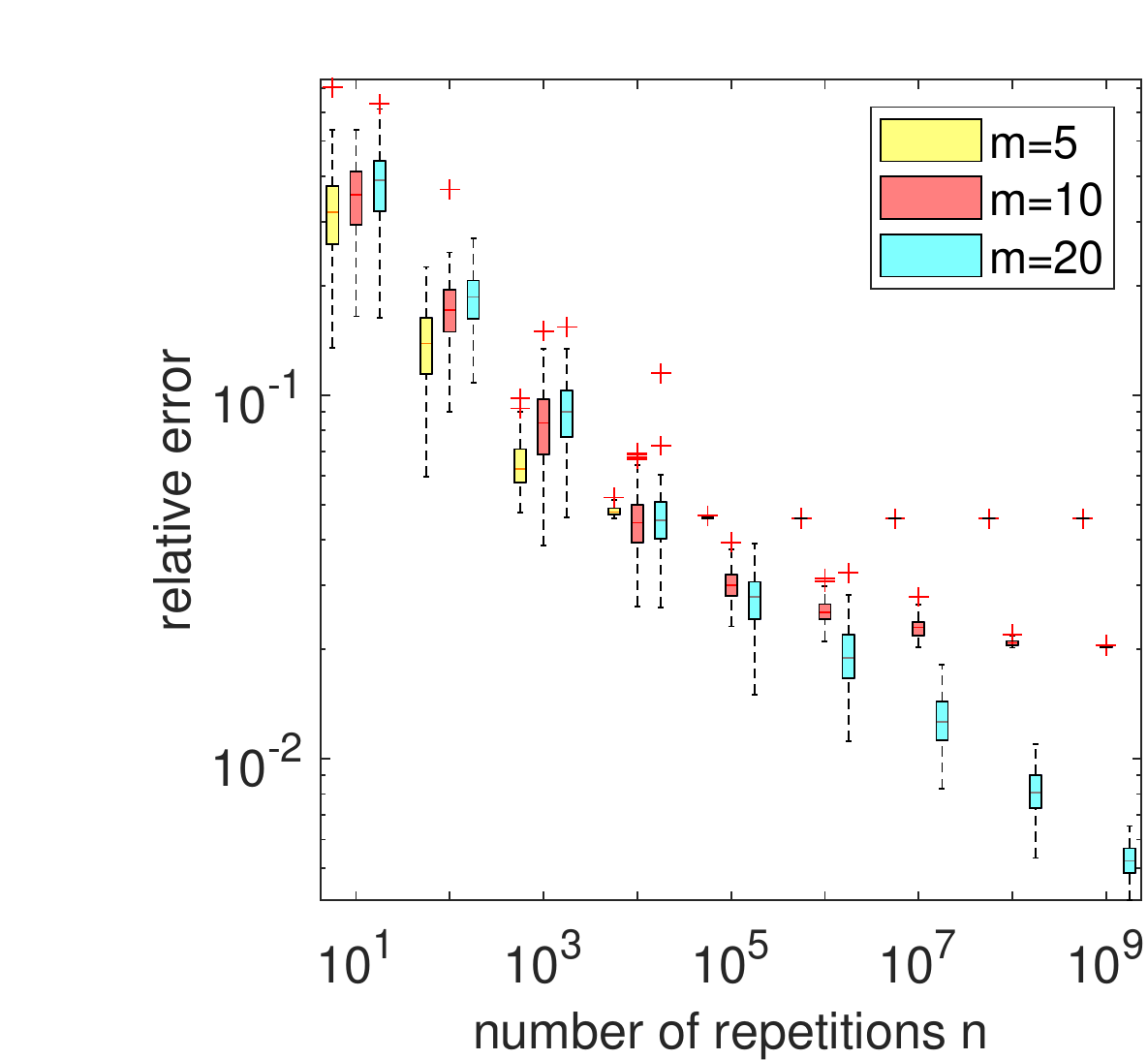} 
 & \includegraphics[width=.4\textwidth]{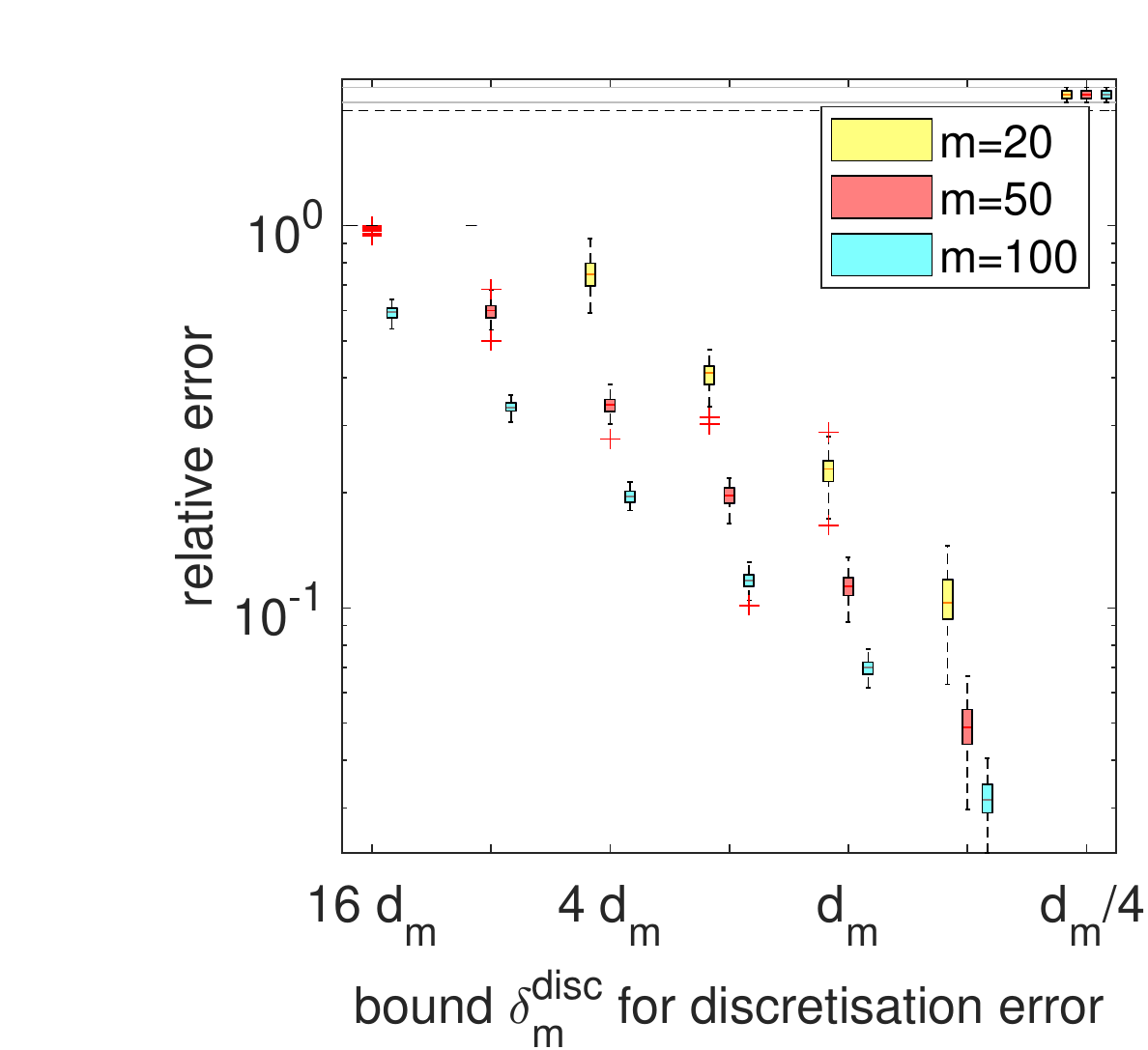}\\
\includegraphics[width=.4\textwidth]{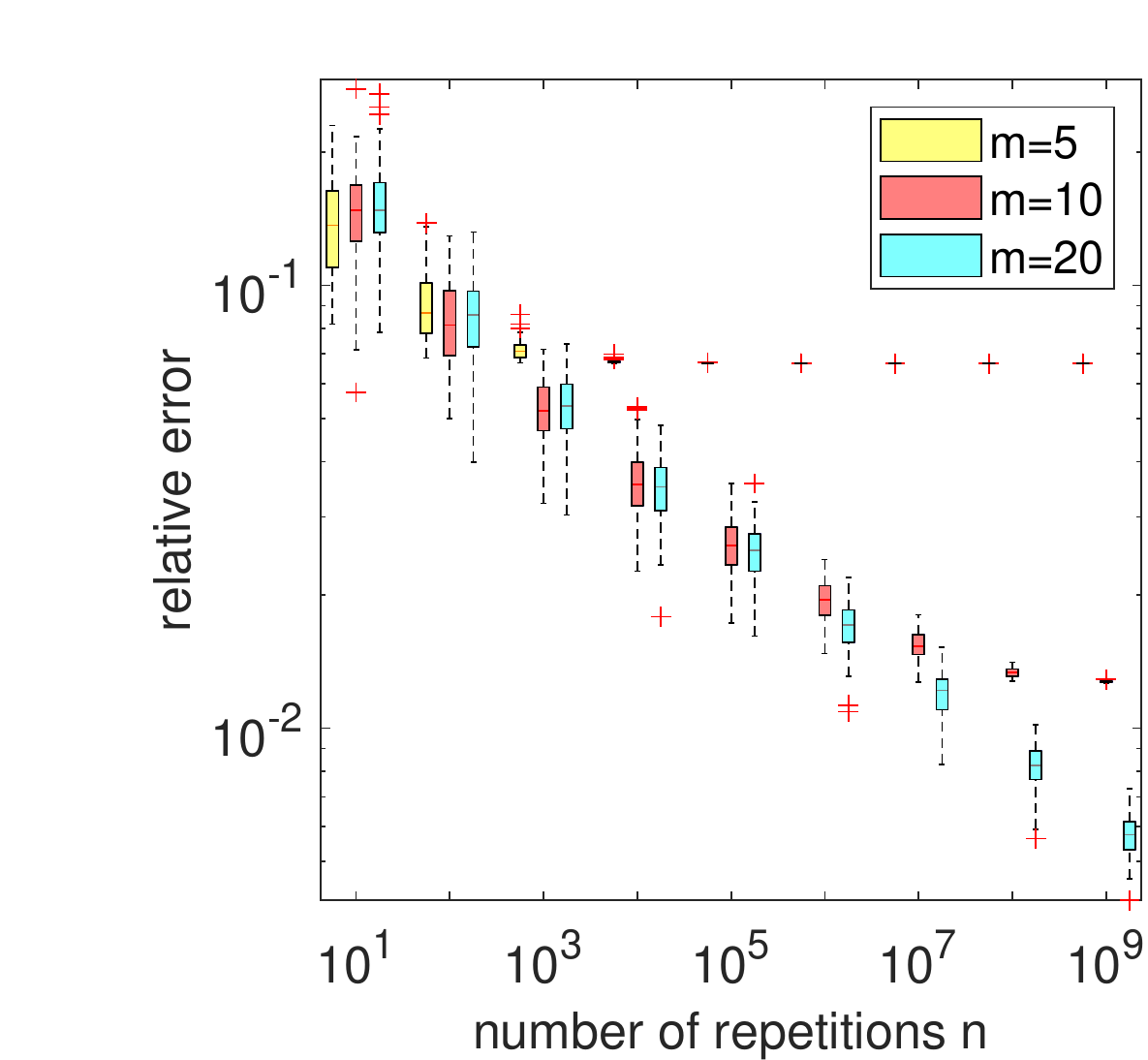} 
& \includegraphics[width=.4\textwidth]{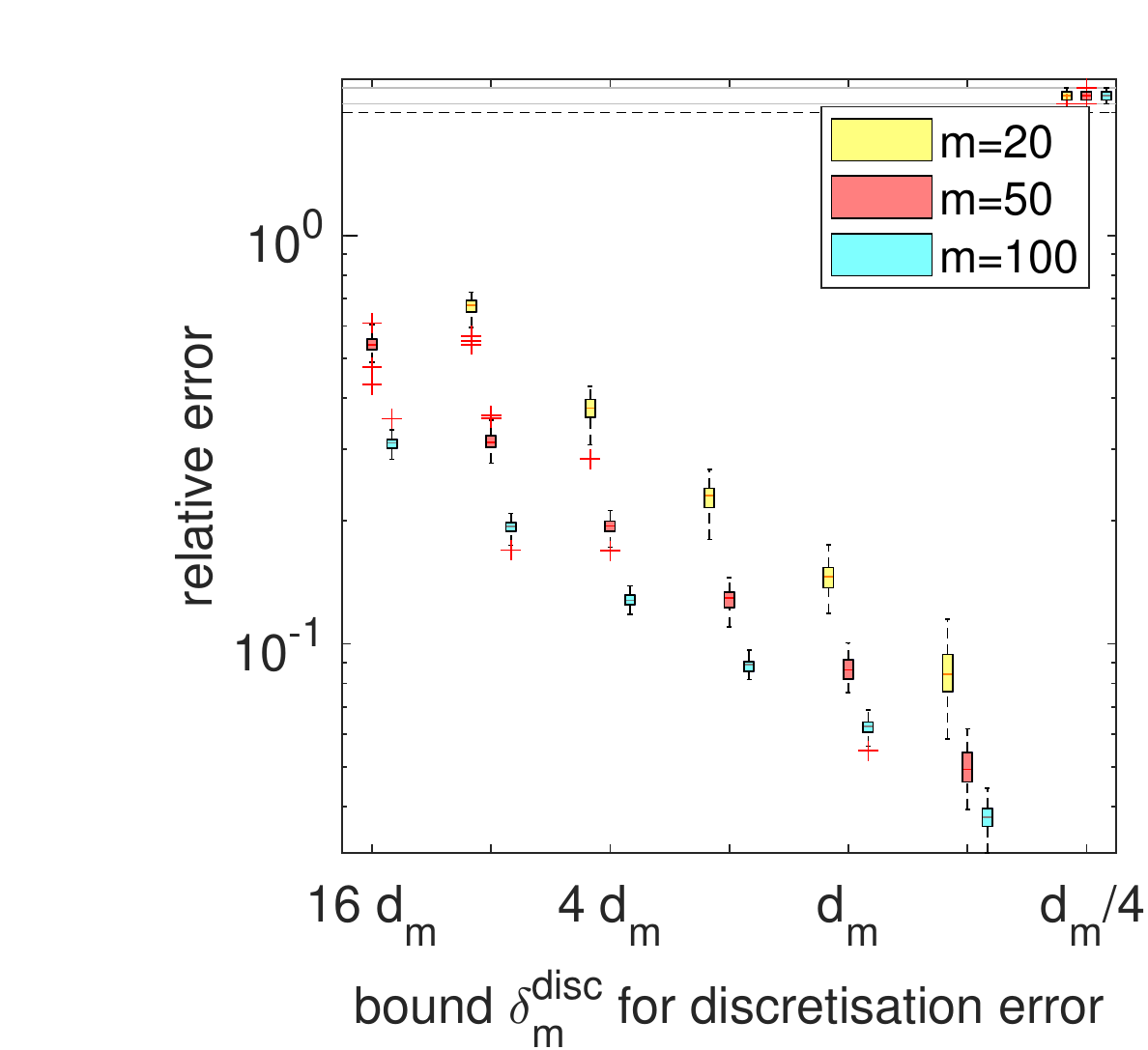} \\
\includegraphics[width=.4\textwidth,trim={-1.5cm -0 -0cm -1cm}]{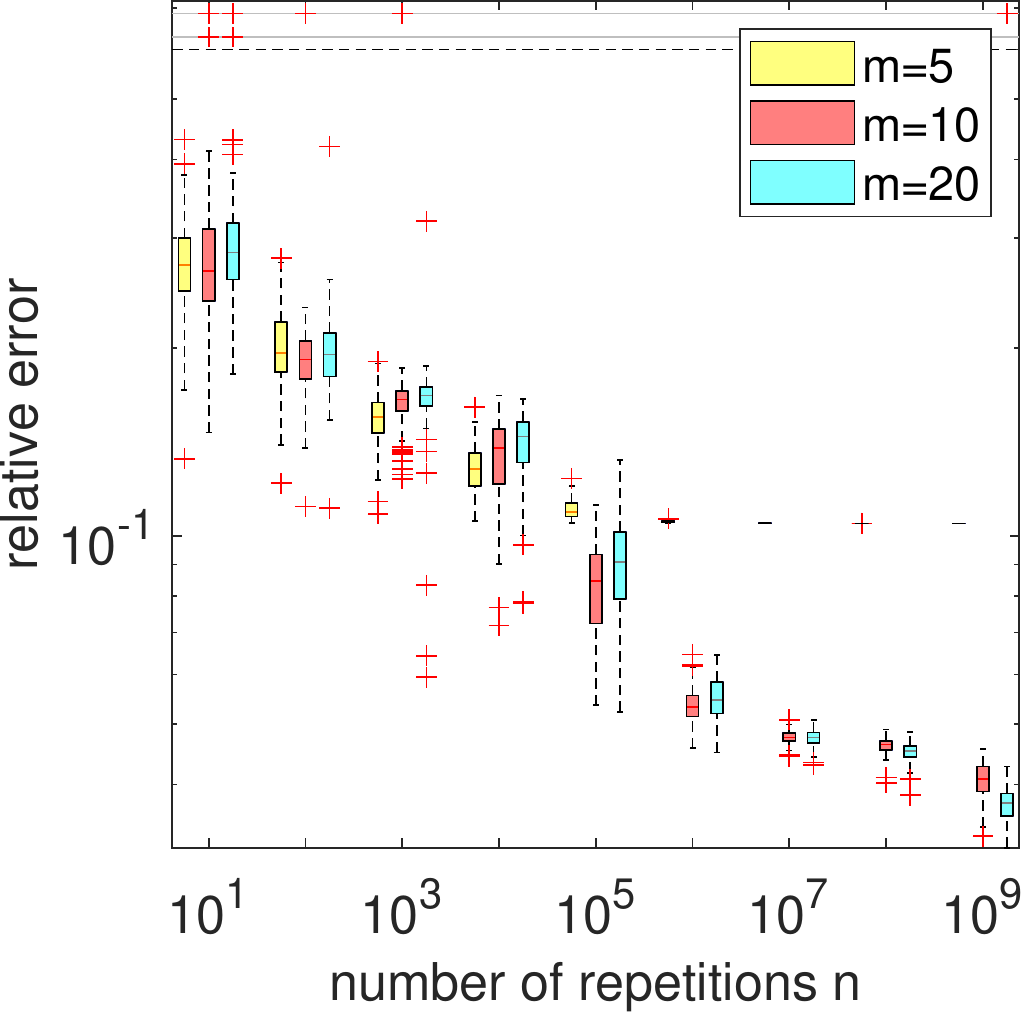}
 & \includegraphics[width=.4\textwidth]{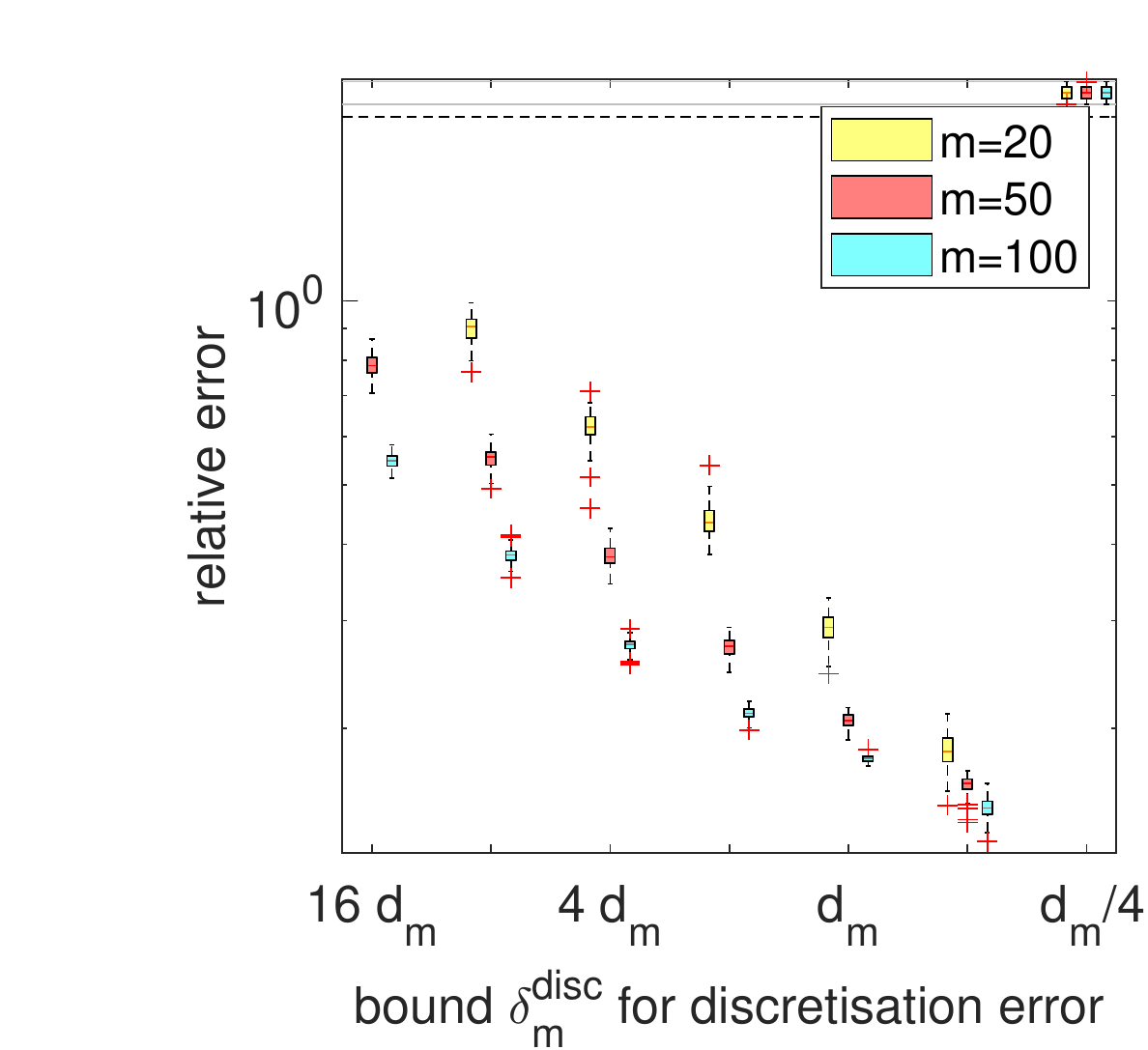}
\end{tabular}
\caption{Results of approach \eqref{tikhapprox} and \eqref{tikhapprox2} with the discrepancy principle as implemented in Algorithm 1 (left column) or 2 (right column) respectively, for 'phillips' (first row), 'gravity' (second row) and 'shaw' (third row) visualised as boxplots for $100$ independent runs. \textit{Left column}: Relative errors $\left\|R_{\alpha_{m,n}}\m \bar{Y}_n\m - \hat{x}\right\|/\|\hat{x}\|$ against number of repetitions $n$ for several numbers of measurement channels $m$. \textit{Right column}: Relative errors $\left\| R_{\alpha_m} P_m^+ \bar{Y}_{n(m,\delta_m^{disc})}\m - \hat{x}\right\|/\|\hat{x}\|$ against bound for the discretisation error $\delta_m^{disc}$ for several numbers of measurement channels $m$. $\delta_m^{disc}$ is chosen in relation to the exact discretisation error $d_m:=\left\|\hat{y}-P_m^+P_m\hat{y}\right\|$.}
\end{figure}

\subsection{Comparison of the both approaches}\label{sec:num:comp}
We now compare the both approaches directly. We consider discretisation by box functions with $m_\infty=4000$ and $m=50,100,200$ and discretisation by hat functions (compare to Proposition \ref{fdr:hat}). The latter is precisely implemented as follows. With  $k=\frac{m_\infty-1}{m-1}$ we set

\begin{align*}
 P_m &: \mathbb{R}^{m_\infty}\to\mathbb{R}^m\\
     &   \begin{pmatrix} y_{(i-1)k+1}\\ ... \\ y_{(i+1)k+1}\end{pmatrix} \mapsto \frac{1}{\sqrt{\sum_{j=1}^{2k+1}a_j^2}}\left(a_1y_{(i-1)k+1}+...+a_{2k+1}y_{(i+1)k+1}\right)e_{i}
\end{align*}

where $i=2,...,m-1$ and 
$$a_i:=\begin{cases} (i-1)/k & i\le k+1,\\
                     1-(i-k-1)/k & i \ge k+1. \end{cases}$$

For the boundaries we set,

\begin{align*}
\begin{pmatrix} y_{1}\\ ... \\ y_{k+1}\end{pmatrix} \mapsto \frac{1}{\sqrt{\sum_{i=k+1}^{k=2k+1}a_i^2}}\left(a_{k+1}y_{1}+...+a_{2k+1}y_{k+1}\right)e_{1}
\end{align*}

and

\begin{align*}
\begin{pmatrix} y_{m_\infty-(k+1)}\\ ... \\ y_{m_\infty}\end{pmatrix} \mapsto \frac{1}{\sqrt{\sum_{i=1}^{k=k+1}a_i^2}}\left(a_{1}y_{m_\infty-(k+1)}+...+a_{k+1}y_{m_\infty}\right)e_{m}.
\end{align*}

Here we use $m_\infty = 4132$ and $m=18,28,52$. We first applied Algorithm 2 with exact upper bound $\delta_m^{disc}=\left\|\hat{y}-P_m^+P_m\hat{y}\right\|$. The (random) stopping index $n(m,\delta_m^{disc})$ from Algorithm 2 is then used as the number of repetitions $n$ in Algorithm 1. We  plot in Figure \ref{comp} the relative errors of the both approaches for growing number of measurement channels $m$. We observe the stated convergence as $m$ grows. Moreover, the errors of the approach with finite-dimensional residuum are even slightly better than the ones of the approach with infinite-dimensional approach in all the examples. This indicates that here no smoothness got lost through discretisation in contrast to Proposition \ref{discussion:prop2}.
\begin{figure}
\label{comp}
\centering
\setlength{\tabcolsep}{0pt}
\begin{tabular}{c c}
\includegraphics[width=.4\textwidth,trim={-1cm -0 -0.7cm -1cm}]{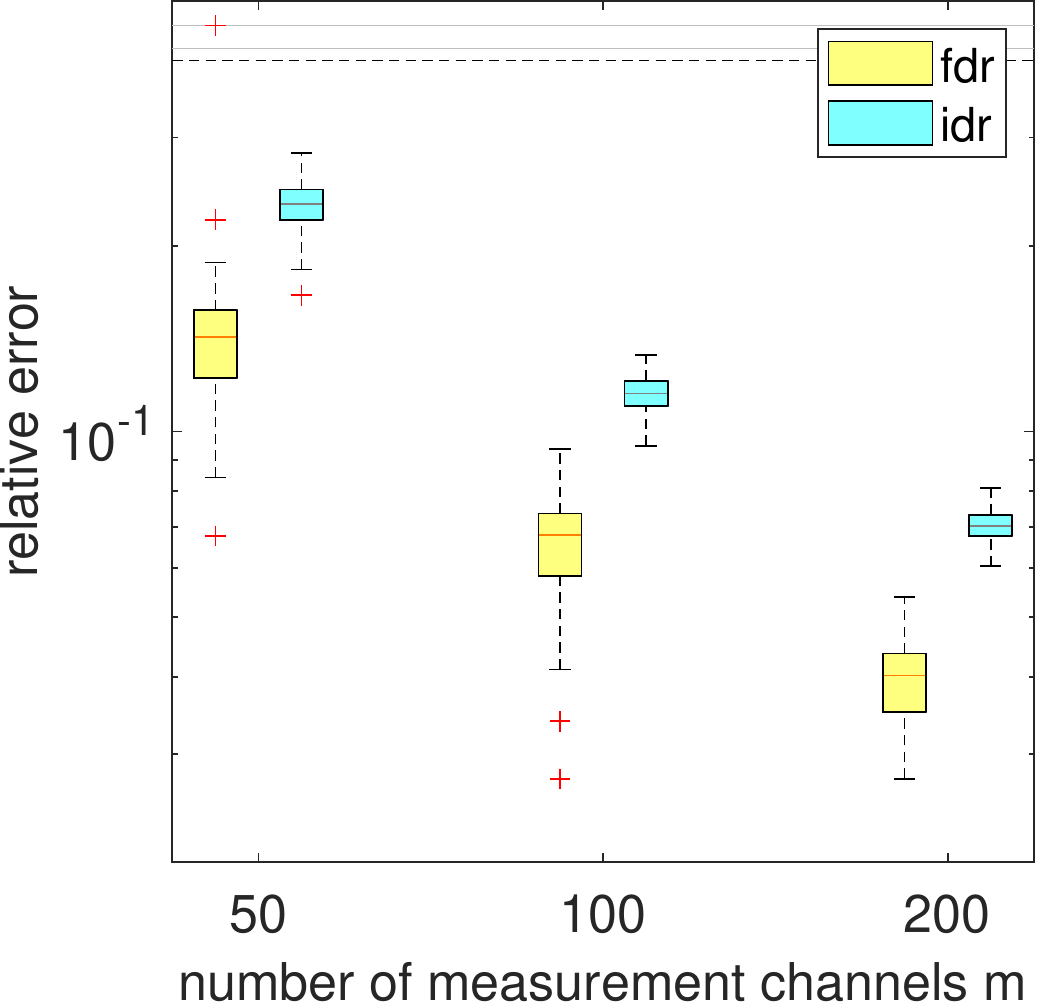} & \includegraphics[width=.4\textwidth,trim={-1cm 0 -0.7cm -1cm}]{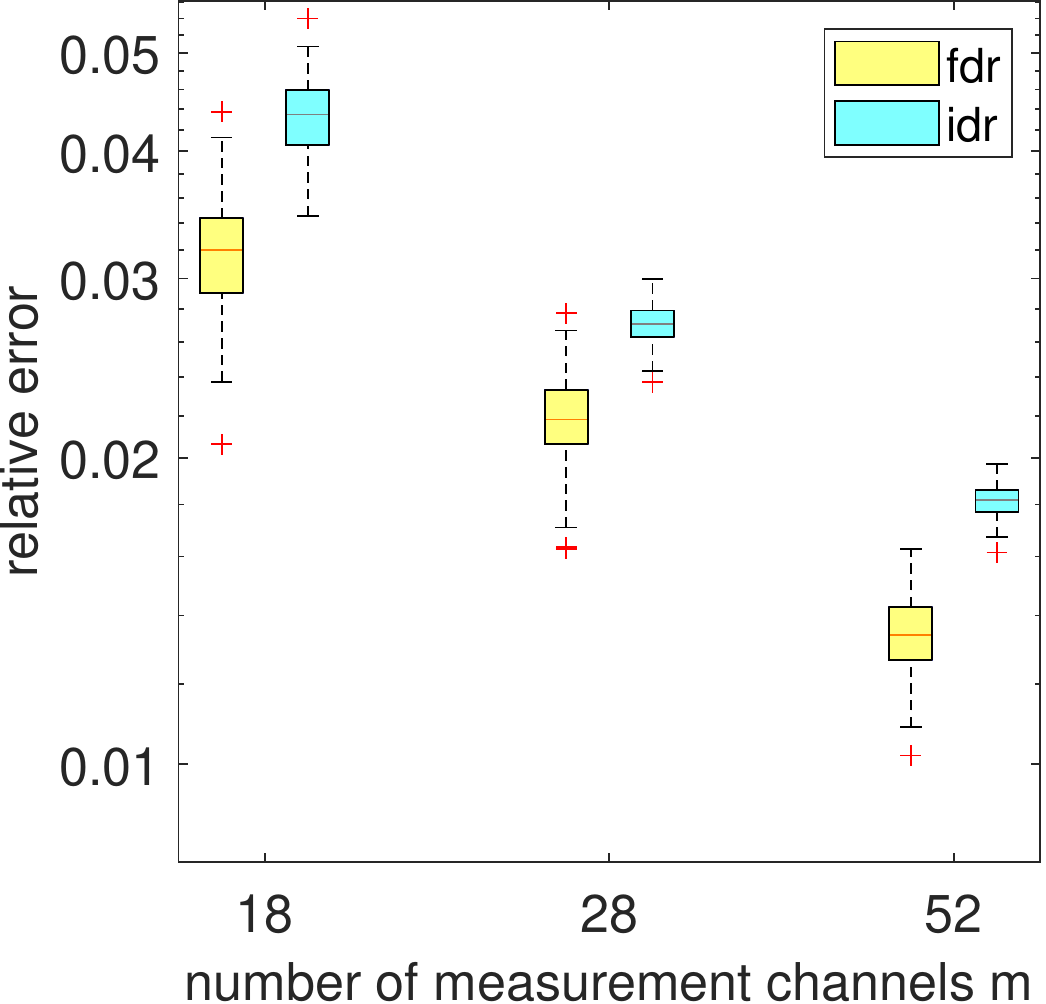}\\ 
\includegraphics[width=.4\textwidth,trim={-1cm 0 -0.7cm -1cm}]{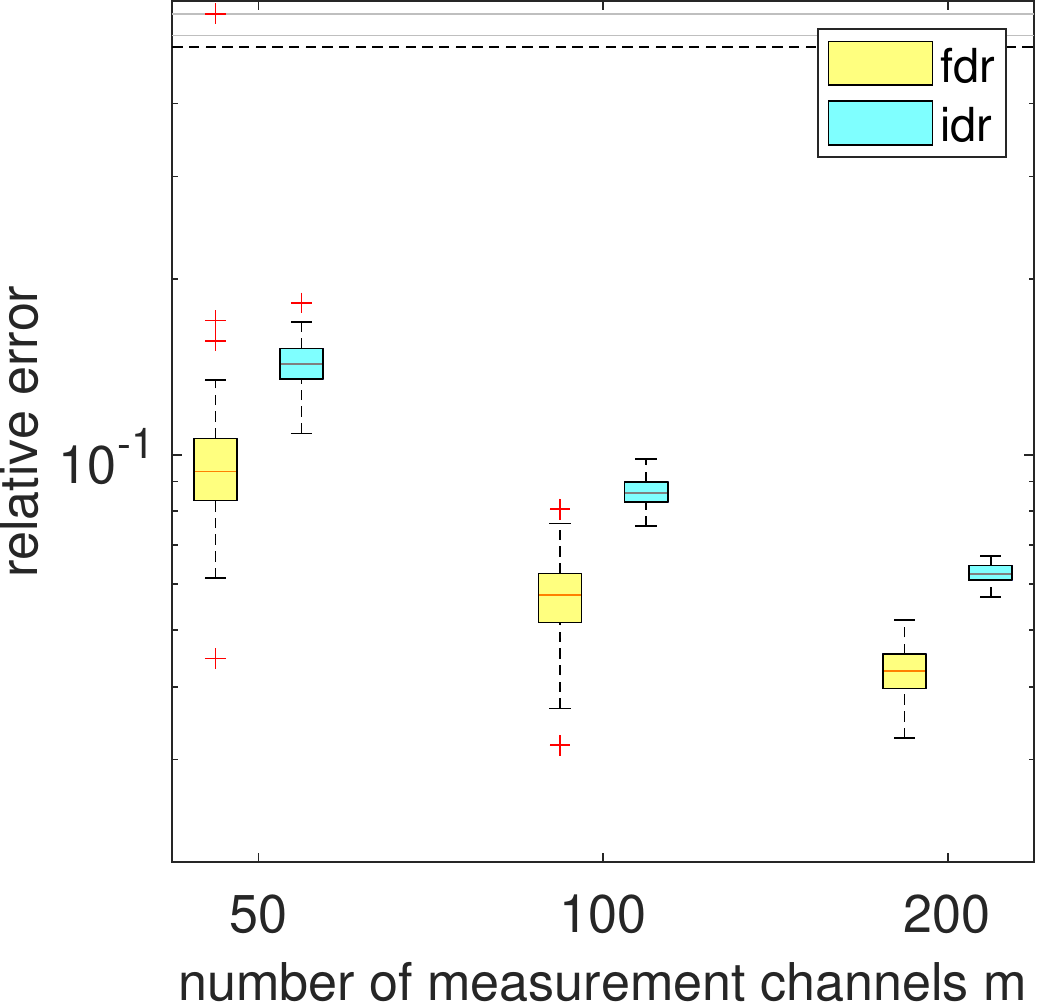} & \includegraphics[width=.4\textwidth,trim={-1cm 0 -0.7cm -1cm}]{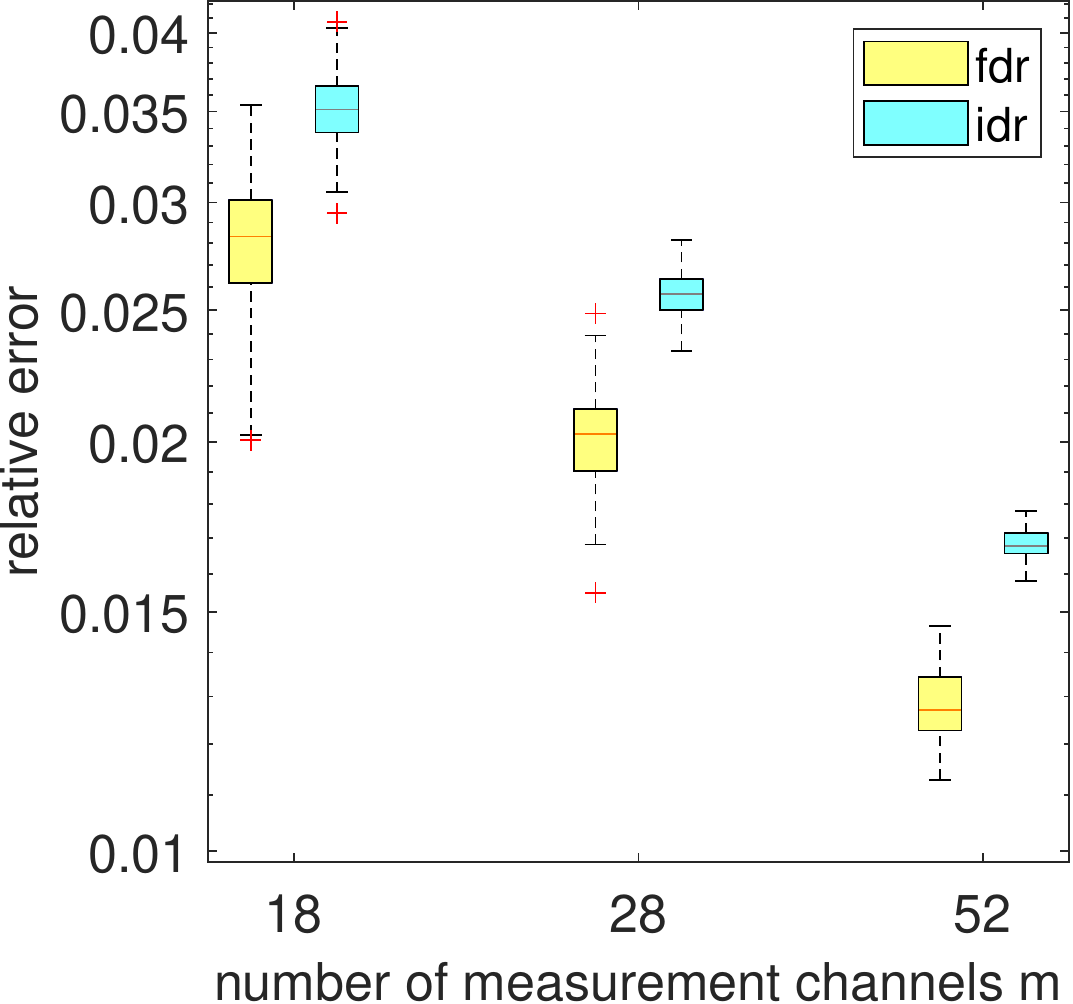}\\
\includegraphics[width=.4\textwidth,trim={-1cm 0 -0.7cm -1cm}]{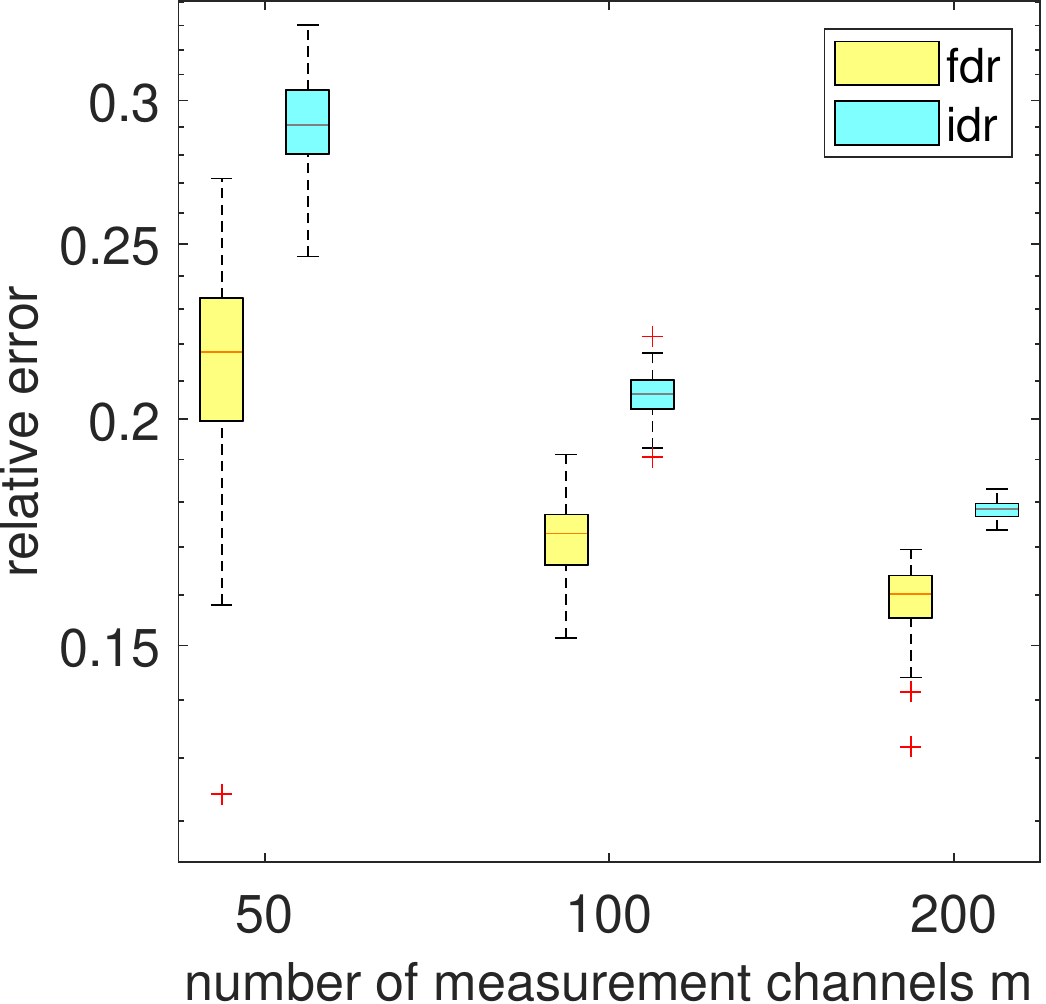} &
 \includegraphics[width=.4\textwidth,trim={-1cm 0 -0.7cm -1cm}]{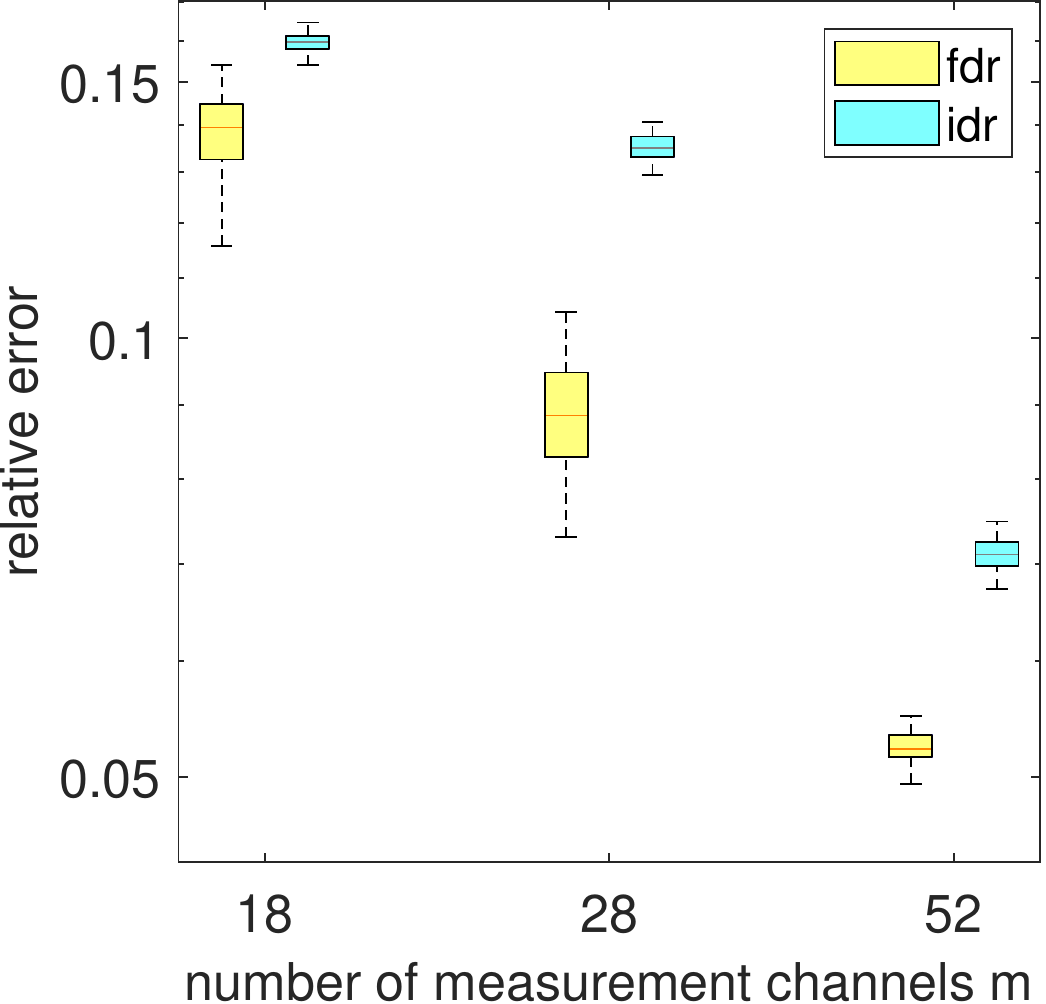}
		\end{tabular}
	\caption{Direct comparison of both approaches \eqref{tikhapprox} (fdr) and \eqref{tikhapprox2} (idr)  with discrepancy principle as implemented in Algorithm 1 and 2 for 'phillips' (first line), 'gravity' (second line) and 'shaw' (third line). For the discretisation of the measurements either box functions (first column) or hat functions (second column) are used. The relative errors $\left\|R_{\alpha_{m,n(m,\delta_m^{disc})}} \bar{Y}_{n(m,\delta_m^{disc})}\m-\hat{x}\right\|/\|\hat{x}\|$ (fdr) and $\left\|R_{\alpha_m}P_m^+\bar{Y}_{n(m,\delta_m^{disc})}\m-\hat{x}\right\|/\|\hat{x}\|$ (idr) are plotted against the number of measurement channels $m$, where $\delta_m^{disc}$ is chosen to be the exact discretisation error $\left\|\hat{y}-P_m^+P_m\hat{y}\right\|$ and $n(m,\delta_m^{disc})$ is calculated with Algorithm 2.}
\end{figure}

\section{Conclusion}\label{sec:con}
In this work we have analysed linear inverse problems under unknown white noise. We presented two approaches for the solution. In both cases we used multiple discretised measurements to prove convergence in probability against the true solution as the number of repetitions and the number of measurement channels tend to infinity. The first approach neither required knowledge of the arbitrary error distribution nor quantitative knowledge of the quality of the discretisation to obtain convergence. For the second approach we also proved a convergence rate under additional knowledge of the discretisation error.
We want to pronounce some important outstanding questions. 
First, one could drop the simplification that one has an equal number of measurements on each measurement channel and try to distribute a fixed total number of measurements on the measurement channels in an optimal way (see also \cite{mathe2017complexity}). Further, the discretisation considered in this article entered the problem through discretised measurements. In particular, this is determined by the practical problem and the way the data is measured or acquired. In order to solve the problem numerically one also has to discretise the true unknown $\hat{x}$. In contrast to the measurements here there is more freedom to choose the numerical discretisation since one is basically only limited by computational power. It therefore is of high interest to find an optimal choice for that.  Also,  it would be desirable to better understand  the interplay between the discretised and the infinite-dimensional problem, e.g. regarding the smoothness of the true solution relative to the former and the latter respectively. Hereby an important open question is to derive natural and verifiable conditions that rigorously guarantee convergence rates also for the approach with finite-dimensional residuum. Finally, it is worth investigating whether it is possible to modify the discrepancy principle to attain optimal convergence rates (in the statistical setting) in our general framework (see \cite{jahn2021optimal}).

\bibliographystyle{IMANUM-BIB}
\bibliography{references}

\end{document}